\newtheorem{theorem}{Theorem}[section]
\newtheorem{lemma}[theorem]{Lemma}
\newtheorem{proposition}{Proposition}[section]
\theoremstyle{definition}
\theoremstyle{remark}
\numberwithin{equation}{section}
\begin{document}
\title[The relativistic quantum Boltzmann equation near equilibrium]{The relativistic quantum Boltzmann equation near equilibrium}

\author{Gi-Chan Bae}
\address{Department of mathematics, Sungkyunkwan University, Suwon 16419, Republic of Korea }
\email{gcbae02@skku.edu}

\author{Jin Woo Jang}
\address{University of Bonn, Institute for Applied Mathematics, Endenicher Allee 60, 53115 Bonn, Germany}
\email{jangjinw@iam.uni-bonn.de}

\author{Seok-Bae Yun}
\address{Department of mathematics, Sungkyunkwan University, Suwon 16419, Republic of Korea }
\email{sbyun01@skku.edu}



\keywords{Relativistic quantum Boltzmann equation, Relativistic Uehling-Uhlenbeck equation, Bose-Einstein distribution, Fermi-Dirac distribution, Nonlinear energy methods}
\allowdisplaybreaks
\begin{abstract}
The relativistic quantum Boltzmann equation (or the relativistic Uehling-Uhlenbeck equation) describes the dynamics of single-species fast-moving quantum particles.
With the recent development of the relativistic quantum mechanics, the relativistic quantum Boltzmann equation has been widely used in physics and engineering such as in the quantum collision experiments and the simulations of electrons in graphene.
In spite of such importance, there has been no mathematical theory on the existence of solutions for the relativistic quantum Boltzmann equation to the best of authors' knowledge. 
In this paper, we prove the global existence of a unique classical solution to the relativistic Boltzmann equation for both bosons and fermions when the initial distribution is nearby a global equilibrium. 

\end{abstract}

\maketitle
\section{Introduction}
The dynamics of single-species fast-moving quantum particles is described by the relativistic quantum Boltzmann equation:
\begin{align}\label{RQBE0}
\begin{split}
	p^{\mu}\partial_{\mu}F = p^0\partial_t F + p\cdot\nabla_xF = C(F,F,F,F),
\end{split}
\end{align}
where $F(x,p,t)$ is a momentum distribution function on the phase point $(x,p)\in\mathbb{T}^3\times\mathbb{R}^3$ at time $t\in[0,\infty)$.
The relativistic quantum collision operator is given by 
\begin{multline}\label{C}
C(F_1,F_2,F_3,F_4)=\int_{\mathbb{R}^3}\frac{dq}{q^0}\int_{\mathbb{R}^3}\frac{dp'}{p'^0}\int_{\mathbb{R}^3}\frac{dq'}{q'^0}W(p,q|p',q')\cr
\times\big[F_1(p')F_2(q')(1+\tau F_3(p))(1+\tau F_4(q)) 
-(1+\tau F_1(p'))(1+\tau F_2(q'))F_3(p)F_4(q)\big],
\end{multline}
where $\tau=+1$ for bosons, and $\tau=-1$ for fermions.
When the functions $F_1$, $F_2$, $F_3$ and $F_4$ are identical, we denote $C(F,F,F,F)$ as $C(F)$. The transition rate $W(p,q|p',q')$ is defined as
\begin{align}\label{W}
	W(p,q|p',q') = \frac{c}{2}s\sigma(g,\theta)\delta^{(4)}(p^{\mu}+q^{\mu}-p'^{\mu}-q'^{\mu}),
\end{align}
where $\sigma(g,\theta)$ denotes the differential cross-section which describes the collisions between particles. The $4$-dimensional Dirac-delta function implies the conservation laws of momentum and energy. The precise definitions of $s$ and $\sigma(g,\theta)$ are given in Section \ref{sec:notations}.

\subsection{Motivations and a brief history}
By the recent advances in the relativistic quantum theory, there has been increasing interest in the relativistic quantum Boltzmann equation in various fields of physics and engineering.
Li et al. \cite{li2008recent} announced several experimental and theoretical results about the interaction between heavy-ion and neutron using the relativistic quantum Boltzmann equation.
In \cite{buss2012transport}, Buss et al. provided several experiments of quantum interaction such as pion-nucleus interactions, heavy-ion reactions, electron-nucleus collisions and neutrino-nucleus interactions using an equation called Giessen Boltzmann-Uehling-Uhlenbeck (GiBUU) transport model.
Since the speed of the electron in graphene is close to that of light, Lapitski developed the lattice Boltzmann method to simulate the stream of the electrons in graphene in \cite{MR3389279}. 
We also introduce some other experiments on the relativistic quantum Boltzmann model  \cite{kim2016introduction,li1997equation,succi2002lattice}.

Despite such modern advances in physics and engineering, there have been quite few mathematical studies on the relativistic quantum Boltzmann equation. Akama  \cite{akama1970relativistic} considered several types of relativistic quantum kinetic models such as the Boltzmann, the Fokker-Planck, the Landau equations, and this was the first appearance of the relativistic quantum Boltzmann equation.
The unique determination of the relativistic quantum equilibrium satisfying $H$-theorem and conservation laws has been considered by Escobedo et al. in \cite{MR1958975,MR2145021}.
To the best of authors' knowledge, the mathematical theory on the existence of solutions to the equation has not been studied yet.

Regarding the Newtonian non-quantum Boltzmann equation, a brief list of the classic literature includes \cite{MR1857879,MR2997586,MR1313028,MR1307620,MR0258399,MR2525118,MR1379589,MR1908664,MR2000470,MR2095473,MR1942465,MR1014927,MR3040372,MR2534787,MR2227952,MR3157048,MR0135535,MR33674,harris2004introduction,huang1987statistical,MR0479206,MR2683475,spohn2012large}.

The mathematical studies on the relativistic non-quantum Boltzmann equation are also not rich.  In 1940, Lichnerowicz and Marrot then suggested the first relativistic Boltzmann equation in \cite{MR4796}.
 In \cite{MR933458}, Dudy\'{n}ski and Ekiel-Je\.{z}ewska studied the linearized relativistic Boltzmann equation and obtained the estimates for the linear term in the hard and soft potential cases. The authors also constructed the global existence of mild solution in \cite{MR1151987}.
Glassey and Strauss obtained some estimates on the derivatives of the collision map between pre-post collisional momenta including the upperbound on the average of $\partial p' /\partial p$ and $\partial q' /\partial p$ in \cite{MR1105532}. Very recently, Chapman et al. \cite{2006.02540} provided an analytic and numerical evidence that the lower bound for the Jacobian determinant for the collision map $p\mapsto p'$ or $q'$ in the \textit{center-of-momentum} frame is zero.
In the \textit{nearby-equilibrium} regime, Glassey and Strauss \cite{MR1211782} established a unique global solution in $\mathbb{T}^3$ for the hard potential case, and it was extended to the whole space case via the fourteen-moment compensating function in \cite{MR1321370}. 
For the soft potential case, the global existence and the asymptotic behavior were constructed in torus in \cite{MR2728733} and extended to the whole space in \cite{MR2911100}. 
In \cite{MR2891870}, Guo and Strain obtained the stability of the relativistic Vlasov-Maxwell-Boltzmann system via the use of two different coordinate represenatations for the post-collisional momenta in order to resolve the issue of the momentum singularity. 
 In \cite{duan2017relativistic}, Duan and Yu provided the global existence of solutions in the weighted $L^\infty$ framework.  Recently, Wang \cite{wang2018global} showed the global wellposedness of the relativistic Boltzmann equation with large amplitude initial data. In the case of Coulombic interaction, Strain and Guo \cite{MR2100057} constructed a unique global-in-time classical solution for the relativistic Landau-Maxwell system. The reduction of the relativistic collision operator using the \textit{center-of-momentum} frame can be found in \cite{MR1958975,MR2765751}.
Andr\'easson et al. \cite{MR2102321} proved the finite-time blowup of the gain term when the initial data starts with the characteristic ball. The uniform $L^1$ stability of mild solution is established in \cite{MR2982812} when the initial data is sufficiently small and it decays exponentially fast. In \cite{MR3880739}, Jang and Yun proved the regularizing estimate of the gain term. For the spatially homogeneous case, Strain and Yun obtained useful inequalities about the relativistic pre-post collisional velocity and proved the existence of solution in \cite{MR3166961}. The uniform $L^{\infty}$ bounds of the solution are established in \cite{jang2019propagation} and the $L^p$ bounds are established in \cite{Jang-Yun-Lp}.
For the Newtonian limit of the relativistic particles, we refer to \cite{MR2679588}.

The mathematical studies on the non-relativistic quantum Boltzmann equation are also not rich. We would like to mention that Nordheim in $1928$ and Uehling - Uhlenbeck in $1933$ discussed the non-relativistic quantum Boltzmann equation in \cite{kikuchi1930kinetische} and \cite{uehling1933transport}, respectively. 
 Benedetto et al. \cite{MR2301288,MR2357423} showed the rigorous validity of the quantum Boltzmann equation from the $N$-body Schr\"{o}dinger equation in the weak coupling regime. In spatially homogeneous case for the Fermi-Dirac particles, Lu classified quantum equilibrium in \cite{MR1861208}. In \cite{MR2029003}, Lu and Wennberg established the strong stability in $L^1$ and proved the convergence of the solution to the equilibrium. The weak solution to the equation is constructed in the soft potential case \cite{MR2264618,MR2433484}.
For the spatially homogeneous case, several studies on bosons can be found in \cite{MR3493188,MR3215584,MR3906275,MR1751703,MR2096049,MR2157856,MR3038680,MR3217534,MR3451497,MR2811470}. For general mathematical and physical reviews, we refer to  \cite{MR2997586,MR1313028,MR1307620,MR1898707,MR0258399,MR2525118,MR1379589,MR1908664,MR2000470,MR2095473,MR2366140,MR1942465}.

\subsection{Notations}\label{sec:notations} Before introducing our main results, we define several notations on the relativistic quantities. First of all, we remark that we normalize all physical constants to $1$ throughout the paper including the speed of light $c$. We denote the energy-momentum $4$-vector as $p^{\mu}$ and usually write it as $p^{\mu}=(p^0,p^1,p^2,p^3)$. The energy-momentum $4$-vector with the lower index is written as a product in the Minkowski metric $p_{\mu}=\eta_{\mu \nu}p^{\nu}$, where the Minkowski metric is given by $\eta_{\mu \nu} =diag(-1,1,1,1) $. 
The inner product of energy-momentum $4$-vectors $p^{\mu}$ and $q_{\mu}$ is defined via the Minkowski metric:
\begin{align*}
p^{\mu}q_{\mu} = p^{\mu}\eta_{\mu \mu}q^{\mu} = -p^0q^0 +\sum_{i=1}^3p^iq^i.
\end{align*}
The energy of a relativistic particle is given by $p^0=\sqrt{1+|p|^2}$. Then we can see that the inner product of an energy-momentum $4$-vector with itself is $p^{\mu}p_{\mu}=-1$. We note that the inner product of energy-momentum $4$-vectors is Lorentz invariant $p^{\mu}q_{\mu}=\Lambda p^{\mu}\Lambda q_{\mu}$, where $\Lambda$ is a Lorentz transform which will be defined below. 

We define the relative energy $s$ between two energy-momentum 4-vectors $p^\mu$ and $q^\mu$ as 
\begin{align}\label{s}
	s(p^{\mu},q^{\mu}) = -(p^{\mu}+q^{\mu})(p_{\mu}+q_{\mu}) = -2p^{\mu}q_{\mu}+2,
\end{align}
and the relative momentum $g$ between them as
\begin{align}\label{g}
	g(p^{\mu},q^{\mu}) = \sqrt{(p^{\mu}-q^{\mu})(p_{\mu}-q_{\mu})} = \sqrt{2(-p^{\mu}q_{\mu}-1)}.
\end{align}
Note that $s=g^2+4$. In the same sense, we define the relative momentum between $p^{\mu}$ and another momentum $p'^\mu$ and $q'^\mu$ as $\bar{g}= g(p^{\mu},p'^{\mu})$ and $\tilde{g}= g(p^{\mu},q'^{\mu})$, respectively. Also, we define $\bar{s}= s(p^{\mu},p'^{\mu})$ and $\tilde{s}= s(p^{\mu},q'^{\mu})$, so that 
$\bar{s}=\bar{g}^2+4$ and $\tilde{s}=\tilde{g}^2+4$.

\subsection{Conservation laws and the Boltzmann $H$-theorem}
	
	The conservation law of the pre-post collisional energy-momentum $4$-vectors is given by
	\begin{align}\label{conserv}
		p^{\mu}+q^{\mu}=p'^{\mu}+q'^{\mu}.
	\end{align}
	The inner product with itself gives 
	\begin{align*}
		(p^{\mu}+q^{\mu})(p_{\mu}+q_{\mu}) = (p'^{\mu}+q'^{\mu})(p'_{\mu}+q'_{\mu}).
	\end{align*}
	Using $p^{\mu}p_{\mu}=p'^{\mu}p'_{\mu}=q^{\mu}q_{\mu}=q'^{\mu}q'_{\mu}=-1$, we have $p^{\mu}q_{\mu}=p'^{\mu}q'_{\mu}$. Similarly, we can have $p^{\mu}p'_{\mu}=q^{\mu}q'_{\mu}$ and $p^{\mu}q'_{\mu}=p'^{\mu}q_{\mu}$, which implies 
	\begin{align}\label{gsymme}
		\begin{split}
			g&=g(p^{\mu},q^{\mu})=g(p'^{\mu},q'^{\mu}), \cr
			\bar{g}&= g(p^{\mu},p'^{\mu}) =g(q^{\mu},q'^{\mu}), \cr
			\tilde{g}&= g(p^{\mu},q'^{\mu}) =g(p'^{\mu},q^{\mu}).
		\end{split}
	\end{align}
	Then it turns out \cite[Proposition 2.7]{Jang-Yun-Lp} that \eqref{conserv} can further imply the \textit{Pythagorean} theorem as
	\begin{align}\label{g triangle}
		g^2=\bar{g}^2+\tilde{g}^2.
	\end{align}  
	The angle of the scattering kernel $\sigma(g,\theta)$ in \eqref{W} is given by
	\begin{align}\label{cos}
		\cos\theta = \frac{(p^{\mu}-q^{\mu})(p'_{\mu}-q'_{\mu})}{g^2}.
	\end{align}
	By \eqref{gsymme} and \eqref{g triangle}, $\cos \theta$ can also be written in the following form \cite[page 277, (A.11)]{MR635279}:
	\begin{align}\label{cosa}
		\cos\theta= 1-2\frac{\bar{g}^2}{g^2}.
	\end{align}
	
	Now we introduce the \textit{center-of-momentum} framework. The relativistic pre-post collisional momenta $p$, $q$ and $p'$, $q'$ satisfying the conservation law \eqref{conserv} can be expressed as
	\begin{equation}\label{com}\begin{split}
	p'&=\frac{p+q}{2}+\frac{g}{2}\left(w-(\gamma-1)(p+q)\frac{(p+q)\cdot w}{|p+q|^2}\right), \cr	
	q'&=\frac{p+q}{2}-\frac{g}{2}\left(w-(\gamma-1)(p+q)\frac{(p+q)\cdot w}{|p+q|^2}\right),
	\end{split}\end{equation}
	where $\gamma=(p^0+q^0)/\sqrt{s}$, and $w$ denotes $\mathbb{S}^2$ component of the unit sphere:
	\begin{align}\label{w}
		w = (\sin\theta\cos\phi,\sin\theta\sin\phi, \cos\theta),
	\end{align}
	on $\theta \in[0,\pi]$ and $\phi \in[0,2\pi]$. The pre-post collisional energy $p'^0$ and $q'^0$ is written by
	\begin{align*}
		p'^0&=\frac{p^0+q^0}{2}+\frac{g}{2\sqrt{s}}(p+q)\cdot w, \cr	
		q'^0&=\frac{p^0+q^0}{2}-\frac{g}{2\sqrt{s}}(p+q)\cdot w. 
	\end{align*}

Once we divide each side of \eqref{RQBE0} by $p^0$, then we can rewrite the relativistic quantum Boltzmann equation as follows:
\begin{align}\label{RQBE}
	\begin{split}
		\partial_tF+\hat{p}\cdot\nabla_x F&=Q(F,F,F,F), \cr
		F(x,p,0)&=F_0(x,p).
	\end{split}
\end{align}
Note that $Q(F,F,F,F)=\frac{1}{p^0}C(F)$. We generally denote the normalized momentum as $\hat{p}$ where
\begin{align*}
	\hat{p}=\frac{p}{p^0}=\frac{p}{\sqrt{1+|p|^2}}.
\end{align*}
In this framework, the relativistic quantum collision operator is reduced to the following form \cite{MR2765751}:
\begin{multline}\label{Q}
	Q(F_1,F_2,F_3,F_4)=\int_{\mathbb{R}^3}dq\int_{\mathbb{S}^2}dw ~  v_{\o}(p^{\mu},q^{\mu}) ~ \sigma(g,\theta)\\\times \big[F_1(p')F_2(q')(1+\tau F_3(p))(1+\tau F_4(q)) 
	-(1+\tau F_1(p'))(1+\tau F_2(q'))F_3(p)F_4(q)\big],
\end{multline}
where the M$\o$ller velocity $v_{\o}$ is given by 
\begin{align*}
	v_{\o}(p^{\mu},q^{\mu}) =\sqrt{\bigg|\frac{p}{p^0}-\frac{q}{q^0}\bigg|^2-\bigg|\frac{p}{p^0}\times \frac{q}{q^0}\bigg|^2}= \frac{g\sqrt{s}}{p^0q^0}.
\end{align*}
One of the properties that the relativistic quantum collision operator satisfies is the following identity
\begin{align*}
	\int_{\mathbb{R}^3}dp ~Q(F,F,F,F)\left( \begin{array}{c} 1 \cr p^{\mu} \end{array}\right) =0 ,
\end{align*}
which implies the conservation laws of the total mass, momentum and energy:
\begin{align}\label{NPE0}
	\frac{d}{dt}\int_{\mathbb{T}^3}dx\int_{\mathbb{R}^3}dp~F\left( \begin{array}{c} 1 \cr p^{\mu} \end{array}\right)  =  0.
\end{align}
The $H$-theorem for the relativistic quantum Boltzmann equation is established in \cite{MR1958975} as
\begin{align*}
\frac{d}{dt}\int_{\mathbb{T}^3}dx\int_{\mathbb{R}^3}dp~ F\ln F -\tau^{-1} (1+\tau F)\ln(1+\tau F) \leq 0 .
\end{align*}

\subsection{Global equilibria}\label{sec:globaleq} It has been shown in \cite{MR2145021} that the global equilibria for the relativistic quantum Boltzmann equation \eqref{RQBE0} has the form of 
$$\mathcal{F}(p)=\frac{1}{e^{\nu(p)}-\tau },\text{ for } \nu(p)=ap^0+b\cdot p+c,$$ for some constant $a$, $b,$ and $c.$ In this paper, we rescale the problem and consider the situation that the macroscopic mean velocity $b=0$ is zero. Then we can define the global equilibrium $m(p)$ in the form of 
\begin{align} \label{m}
	m(p)= \frac{1}{e^{ap^0+c}-\tau },
\end{align}
where $a>0$ and $c\geq -a$ in the case of bosons and $a>0$ and $c\in\mathbb{R}$ in the case of fermions. In this paper, we only consider $a>0$ and $c>-a$ in the case of bosons in order to exclude a possible blow-up of the equilibrium. We choose and fix the constant $a$ and $c$ and consider the initial distribution $F_0$ nearby the equilibrium $m(p)$. Then we will prove that the particle distribution $F$ whose initial distribution $F_0$ is sufficiently close to the relativistic quantum global equilibrium $m(p)$ will converge to $m(p)$ in $H^N_x L^2_v$ sense for $N\ge 3$.

We also denote the non-quantum relativistic equilibrium $J(p)$ as
\begin{align} \label{J}
	J(p^0)= e^{-ap^0}.
\end{align}
\subsection{Spaces}
Now we define some notations on norms and inner products which are frequently used throughout this paper. The constant $C$ is generically used, whose value can be changed from line to line. Especially, when we want to indicate the dependency of $a$, we indicate it as $C_{a}$.
We define the standard $L^2$ norm as
\begin{align*}
	\|f\|_{L^2_p}=\left(\int_{\mathbb{R}^3}dp~|f(p)|^2 \right)^{\frac{1}{2}}, \quad \|f\|_{L^2_{x,p}}=\left(\int_{\mathbb{T}^3}dx\int_{\mathbb{R}^3}dp~|f(x,p)|^2 \right)^{\frac{1}{2}},
\end{align*}
and we define the weighted $L^2$ norm as\begin{align*}
	\|f\|_{\nu}=\left(\int_{\mathbb{R}^3}dp~\nu(p)|f(p)|^2 \right)^{\frac{1}{2}}, \quad \|f\|_{x,\nu}=\left(\int_{\mathbb{T}^3}dx\int_{\mathbb{R}^3}dp~ \nu(p)|f(x,p)|^2 \right)^{\frac{1}{2}}.
\end{align*}
The standard $L^2$ inner product is given by 
\begin{align*}
	\langle f,g \rangle_{L^2_p}&= \int_{\mathbb{R}^3}dp~f(p)g(p) , \quad
	\langle f,g \rangle_{L^2_{x,p}}=\int_{\mathbb{T}^3}dx\int_{\mathbb{R}^3}dp~f(x,p)g(x,p).
\end{align*}
We use the multi-index notation
\begin{align*}
	\alpha=(\alpha_0,\alpha_1,\alpha_2,\alpha_3),
\end{align*}
to simplify the differential operator:
\begin{align*}
	\partial^{\alpha}=\partial_{t}^{\alpha_0}\partial_{x_1}^{\alpha_1}\partial_{x_2}^{\alpha_2}\partial_{x_3}^{\alpha_3}.
\end{align*}
For a brevity of discussion on the frequently used function $m+\tau m^2$, we often write the variable only at the end of the function as
\begin{align*}
(m+\tau m^2)(p)=m(p)+\tau (m(p))^2.
\end{align*}
We define the higher-order energy norm as follows:
\begin{align*}
	\mathcal{E}(f(t))=\frac{1}{2}\sum_{|\alpha|\leq N}\|\partial^{\alpha}f(t)\|^2_{L^2_{x,p}} + \int_0^t\sum_{|\alpha|\leq N}\|\partial^{\alpha}f(s)\|^2_{x,\nu} ds .
\end{align*}

\subsection{Hypothesis on the collision kernel}
We assume that the differential cross section $\sigma(g,\theta)$ satisfies the hard potential assumption with an angular cut-off as in \cite{MR933458,dudynski2007relativistic}:
\begin{align}\label{sigma}
	\sigma(g,\theta) = g \sin\theta.
\end{align}This assumption is analogous to the standard hard-sphere assumption for the Newtonian Boltzmann equation; if $|p+q|$ or $|p-q|$ is sufficiently small or if $|p-q|$ is much larger than $|p^0-q^0|$, then it behaves as the Newtonian hard sphere kernel.

\subsection{Main results}
In this paper, we prove the existence of a unique global-in-time classical solution of the relativistic quantum Boltzmann equation nearby the global equilibrium. Before we state our main theorem, we would like to introduce the reformulation of the relativistic quantum Boltzmann equation via a special linearization that is relevant to the relativistic and quantum case. 

In the non-quantum case, the standard decomposition of the perturbed solution $f$ near the global equilibrium $\mu$ is $F=\mu+\sqrt{\mu}f$ for $\mu=e^{-|p|^2/2}$ or $\mu=e^{-p^0}$ in the Newtonian and the relativistic cases, respectively. 
But in the quantum case, the previous decomposition $F=\mu+\sqrt{\mu}f$ does not guarantee the non-negativity of $\langle Lf, f\rangle_{L^2_{v}}$ as in Lemma \ref{null space}. Thus inspired by the previous work related to the quantum kinetic models in \cite{MR4096124,van1982generalized,MR1773932,MR2902121}, we choose the following decomposition of $F$:  
\[F(x,p,t)= m(p)+\sqrt{m(p)+\tau m^2(p)}f(x,p,t),\]
where the global equilibrium $m(p)$ is defined as in \eqref{m}:
\begin{align*}
	m(p)= \frac{1}{e^{ap^0+c}-\tau }.
\end{align*}
We plug the decomposition into \eqref{RQBE} and divide each side of the equation by $\sqrt{m+\tau m^2}$ to have
\begin{align}\label{pert1}
\begin{split}
	\partial_tf+\hat{p}\cdot\nabla_xf +Lf&= \Gamma(f)+T(f), \cr
	f(x,p,0) &= f_0(x,p). 
\end{split}
\end{align}
The linear term $Lf$ is given by $$Lf=\nu(p)f+K_1f-K_2f,$$ where $\nu(p)$ is the collision frequency of a relativistic quantum particle, and $K_1$ and $K_2$ are compact operators. The right-hand side of \eqref{pert1} consists of nonlinear terms $\Gamma(f)$ and $T(f)$ where $\Gamma(f)$ consists of all the second-order nonlinear terms and  $T(f)$ consists of all the third-order nonlinear terms. We can easily check that the fourth-order nonlinear terms disappear by cancellation. In the linearization process, we observe that the collision operator does not satisfy the quad-linearity $Q(k+h,f,f,f)= Q(k,f,f,f)+Q(h,f,f,f)$. For more of the detailed linearization process, see Section \ref{sec:linearization}. Now we are ready to state our main theorem.
\begin{theorem}\label{Main Theorem}
Let $N\geq 3$. Suppose that the initial data $F_0$ satisfies
\begin{align*}
\left\{\begin{array}{ll}  0 \leq F_0(x,p) \leq 1 \quad \mbox{for fermions,} \\ 0  \leq F_0(x,p)  \qquad \hspace{3mm} \mbox{for bosons,} \end{array} \right.
\end{align*}
and the global equilibrium $m(p)$ shares the same total mass, momentum and energy with the initial data:
\begin{align}\label{assumption}
	\int_{\mathbb{T}^3\times \mathbb{R}^3} dxdp~ F_0(x,p)\left( \begin{array}{c} 1 \cr p^{\mu} \end{array}\right)  = \int_{\mathbb{T}^3\times \mathbb{R}^3} dxdp~m(p)\left( \begin{array}{c} 1 \cr p^{\mu} \end{array}\right).
\end{align}
Then there exist $\delta>0$ and $C>0$ such that if $\mathcal{E}(f_0)\leq \delta$ then there exists a unique global-in-time solution of \eqref{pert1} such that
\begin{enumerate}
\item The distribution function $F(x,p,t)$  has the following bounds:
\begin{align*}
	\left\{\begin{array}{ll}  0 \leq F(x,p,t) \leq 1 \quad \mbox{for fermions,} \\ 0 \leq F(x,p,t) \qquad \hspace{3mm} \mbox{for bosons.} \end{array} \right.
\end{align*}
\item The energy norm is bounded globally in time:
\[\sup_{t\in\mathbb{R}^+}\mathcal{E}(f(t))\leq C\mathcal{E}(f_0).\]
\item There exists a uniform constant $\epsilon>0$ such that the perturbation decays exponentially:
\[\sum_{|\alpha|\leq N}\|\partial^{\alpha}f(t)\|^2_{L^2_{x,p}}\leq Ce^{-\epsilon t}.\]
\item Let $f$ and $\bar{f}$ be the solutions with the initial data $f_0$ and $\bar{f}_0$, respectively. Then there exists a positive constant $\delta>0$ such that 
\[\|f-\bar{f} \|_{L^2_{x,p}} \leq e^{-\delta t}\|f_0-\bar{f}_0 \|_{L^2_{x,p}}. \]
\end{enumerate}
\end{theorem}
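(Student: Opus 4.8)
The plan is to run a Guo-type nonlinear energy (macro--micro decomposition) method, adapted to the relativistic collision geometry and to the quantum weight $\sqrt{m+\tau m^2}$ that appears in the decomposition $F=m+\sqrt{m+\tau m^2}\,f$. The first block of work is the linear theory for $L=\nu+K_1-K_2$. I would begin with pointwise control of the collision frequency coming from the hard-potential kernel \eqref{sigma} and the M\o ller velocity, establishing $\nu_0\le\nu(p)\le C(p^0)^{b}$ for positive constants $\nu_0,C,b$; the lower bound $\nu\gtrsim1$ is what makes $\|\partial^\alpha f\|_{L^2_{x,p}}\lesssim\|\partial^\alpha f\|_{x,\nu}$ and is used throughout. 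Next I would show that $K_1,K_2$ are integral operators whose kernels, after the center-of-momentum change of variables \eqref{com} and the Pythagorean identity \eqref{g triangle}, are of Grad type, so that $K=K_1-K_2$ is compact on $L^2_p$ and satisfies $\|Kf\|_{L^2_{x,p}}\le\eta\,\|f\|_{x,\nu}+C_\eta\|f\|_{L^2(\mathbb{T}^3\times\{|p|\le R_\eta\})}$. The decomposition above is chosen precisely so that the quadratic form of $L$ is nonnegative; using it I would prove $\langle Lf,f\rangle_{L^2_p}\ge0$ with
\begin{equation*}
\ker L=\mathrm{span}\Big\{\sqrt{m+\tau m^2},\ p^1\sqrt{m+\tau m^2},\ p^2\sqrt{m+\tau m^2},\ p^3\sqrt{m+\tau m^2},\ p^0\sqrt{m+\tau m^2}\Big\},
\end{equation*}
and then, combining nonnegativity with the compactness of $K$ via Weyl's theorem, upgrade this to the coercivity $\langle Lf,f\rangle_{L^2_p}\ge c_0\|(\mathbf{I}-\mathbf{P})f\|_\nu^2$, where $\mathbf{P}$ is the orthogonal projection of $L^2_p$ onto $\ker L$. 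I would close this block with the nonlinear estimates $|\langle\partial^\alpha\Gamma(f),\partial^\alpha g\rangle_{L^2_{x,p}}|\lesssim\sqrt{\mathcal{E}(f)}\sum_{|\beta|\le N}\|\partial^\beta f\|_{x,\nu}\|\partial^\beta g\|_{x,\nu}$ and the analogous cubic bound for the third-order term $T$, both using $H^N_x\hookrightarrow L^\infty_x$ for $N\ge3$ together with the kernel bounds.

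With the linear theory in place I would derive the a priori estimate for \eqref{pert1}. Applying $\partial^\alpha$ for $|\alpha|\le N$, pairing with $\partial^\alpha f$ in $L^2_{x,p}$ and summing, the transport term $\hat{p}\cdot\nabla_x$ integrates to zero on $\mathbb{T}^3$, the linear term contributes $c_0\sum_{|\alpha|\le N}\|(\mathbf{I}-\mathbf{P})\partial^\alpha f\|_{x,\nu}^2$, and the nonlinear terms are bounded by $C\sqrt{\mathcal{E}(f)}\sum_{|\alpha|\le N}\|\partial^\alpha f\|_{x,\nu}^2$. This controls only the microscopic part, so the crux is a macroscopic estimate: writing $\mathbf{P}f=\{a(t,x)+b(t,x)\cdot p+c(t,x)p^0\}\sqrt{m+\tau m^2}$, I would substitute into \eqref{pert1}, match coefficients of the momentum moments of $\sqrt{m+\tau m^2}$ to obtain a closed first-order system for $(a,b,c)$ whose right-hand sides are built from $(\mathbf{I}-\mathbf{P})f$ and the nonlinearity, and then, using the divergence structure of that system together with an elliptic estimate for $-\Delta_x$ on $\mathbb{T}^3$, bound $\sum_{|\alpha|\le N}\|\nabla_x\partial^\alpha(a,b,c)\|_{L^2_x}^2$ by the microscopic dissipation, plus $\sqrt{\mathcal{E}(f)}\,\mathcal{E}(f)$, plus the time derivative of a bounded interaction functional $\mathcal{I}(t)$. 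Since the hypothesis \eqref{assumption} forces the conserved moments of $f$, hence $(a,b,c)$, to have zero spatial mean, Poincar\'e's inequality promotes this to full control of $\sum_{|\alpha|\le N}\|\partial^\alpha\mathbf{P}f\|_{x,\nu}^2$. Combining the two estimates, I would form a modified energy $\mathcal{E}_{\mathrm{int}}(t)=\tfrac12\sum_{|\alpha|\le N}\|\partial^\alpha f(t)\|_{L^2_{x,p}}^2+\kappa\,\mathcal{I}(t)$, equivalent to $\tfrac12\sum_{|\alpha|\le N}\|\partial^\alpha f\|_{L^2_{x,p}}^2$ for $\kappa$ small, which satisfies
\begin{equation*}
\frac{d}{dt}\mathcal{E}_{\mathrm{int}}(t)+c_1\sum_{|\alpha|\le N}\|\partial^\alpha f(t)\|_{x,\nu}^2\ \le\ C\sqrt{\mathcal{E}(f(t))}\sum_{|\alpha|\le N}\|\partial^\alpha f(t)\|_{x,\nu}^2 .
\end{equation*}
If $\mathcal{E}(f_0)\le\delta$ is small enough, a continuity argument keeps $C\sqrt{\mathcal{E}(f)}<c_1/2$ for all time, and since $\nu\gtrsim1$ gives $\mathcal{E}_{\mathrm{int}}\lesssim\sum_{|\alpha|\le N}\|\partial^\alpha f\|_{x,\nu}^2$, one obtains $\frac{d}{dt}\mathcal{E}_{\mathrm{int}}+\epsilon_0\mathcal{E}_{\mathrm{int}}\le0$. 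Integrating this yields the uniform bound $\sup_t\mathcal{E}(f(t))\le C\mathcal{E}(f_0)$, which is item (2), and the exponential decay $\sum_{|\alpha|\le N}\|\partial^\alpha f(t)\|_{L^2_{x,p}}^2\le Ce^{-\epsilon t}$, which is item (3).

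Existence itself is then obtained by the standard iteration: solve the linear transport--loss problems $\partial_tf^{n+1}+\hat{p}\cdot\nabla_xf^{n+1}+\nu f^{n+1}=(\text{expression in }f^n)$, use the a priori estimate to confine the sequence to a small ball of the energy space and to show it is Cauchy, and pass to the limit for a local solution, which the uniform bound continues to all times. For the pointwise bounds in item (1) I would argue at the level of the original equation \eqref{RQBE}: the iteration can be written in mild (Duhamel) form so that, because the gain part of $Q$ is nonnegative, the loss part is linear in the unknown, and for fermions the Uehling--Uhlenbeck factors $(1+\tau F)=(1-F)$ suppress the gain as $F\uparrow1$, the bounds $0\le F^n$ (bosons) and $0\le F^n\le1$ (fermions) propagate along the iteration and survive in the limit. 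For item (4), set $g=f-\bar f$; it solves \eqref{pert1} with right-hand side $[\Gamma(f)-\Gamma(\bar f)]+[T(f)-T(\bar f)]$, and since both $F_0$ and $\bar F_0$ share the mass, momentum and energy of $m$ by \eqref{assumption}, $g$ has vanishing conserved quantities, so the macroscopic/Poincar\'e step applies to $g$ verbatim. Writing the source as a sum of bilinear (resp. trilinear) expressions with at least one slot equal to $f$ or $\bar f$ and using $H^N_x\hookrightarrow L^\infty_x$, its pairing with $g$ is bounded by $C\big(\sqrt{\mathcal{E}(f)}+\sqrt{\mathcal{E}(\bar f)}\,\big)\|g\|_{x,\nu}^2$; the same Lyapunov argument, now for $\|g\|_{L^2_{x,p}}^2$ plus a small interaction correction, gives exponential decay and hence $\|f(t)-\bar f(t)\|_{L^2_{x,p}}\le e^{-\delta t}\|f_0-\bar f_0\|_{L^2_{x,p}}$.

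I expect the main obstacles to be the two relativistic--quantum features. First, the coercivity of $L$: the positivity and the null-space computation have to be carried out with the nonstandard weight $\sqrt{m+\tau m^2}$ rather than $\sqrt m$, and the near-compactness of $K_1-K_2$ requires controlling the quantum collision kernels through the relativistic change of variables \eqref{com}. Second, the macroscopic estimate: the hydrodynamic fields are coupled through the relativistic energy $p^0=\sqrt{1+|p|^2}$, so the moment identities and the closure of the $(a,b,c)$-system are algebraically heavier than in the Newtonian case, and this is where most of the technical work sits. The extra third-order term $T(f)$ forced by the failure of quad-linearity of $Q$ adds bookkeeping but, being cubic, is harmless for small data.
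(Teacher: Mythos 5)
Your proposal follows essentially the same route as the paper: linearize with $F=m+\sqrt{m+\tau m^2}\,f$, establish coercivity of $L=\nu+K_1-K_2$ on the orthogonal complement of the weighted collision invariants, bound $\Gamma$ and $T$ via $H^2_x\subset L^\infty_x$ for $N\ge3$, run an iteration in which $F^{n+1}$ sits in the slot of $Q$ that makes the update affine with nonnegative gain and loss rates (so $0\le F^{n+1}\le1$ propagates), recover the macroscopic part $(\mathcal{A},\mathcal{B},\mathcal{C})$ through the macro--micro system and the Poincar\'e inequality using the zero-mean constraints coming from \eqref{assumption}, and close by a continuity argument. The only cosmetic difference is that you package the macroscopic step through a temporal interaction functional $\mathcal{I}(t)$, whereas the paper derives a pointwise-in-time full coercivity bound \eqref{full coer} by reading $\partial_t\mathcal{A},\partial_t\mathcal{B},\partial_t\mathcal{C}$ directly off the moment equations and absorbing them via smallness; both are interchangeable variants of the Guo-type energy method.
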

To prove the main theorem, we need a coercivity estimate of the linear operator $L$. The linear operator $L$ satisfies the following dissipation property:
\begin{align*}
	\langle Lf, f\rangle_{L^2_{v}} &\geq \delta \|(I-P)f\|_{\nu}^2,
\end{align*}
for some positive $\delta>0$. The macroscopic projection $Pf$ denotes the orthonormal projection onto $L^2_p$ space with respect to the following $5$-basis:
\begin{align}\label{basis1}
	\left\{\sqrt{m+\tau m^2},p_1\sqrt{m+\tau m^2},p_2\sqrt{m+\tau m^2},p_3\sqrt{m+\tau m^2},p^0\sqrt{m+\tau m^2}\right\}.
\end{align}
The $5$-dimensional basis above constitutes the kernel of $L$. 
Then it is crucial to obtain some upper-bound estimates of the nonlinear terms $\Gamma$ and $T$ in order to construct the solution. Some parts of the nonlinear terms can be estimated by a simple change of variables and the H\"{o}lder inequality. However, regarding some second-order nonlinear terms whose integrands consist of the product of $f(p')$ (or $f(q')$) and $f(p)$, we need to take a change of variables of either $p\mapsto p'$ (or $p\mapsto q'$) and there occur some non-trivial difficulties. Different from the non-relativistic case, a uniform positive lower bound for the Jacobian for the change of variables $|\partial p' /\partial p|$ (or $|\partial q' /\partial p|$) does not exist as shown in \cite{2006.02540}, and hence we need another way to deal with this difficulty. One way is to proceed the estimates of the second-order nonlinear terms by lifting the $dq$ integral to energy-momentum four vector integral $dq^{\mu}$ imposing additional Dirac-delta function. This technique has been introduced in \cite{MR635279, Jang2016, jang2019propagation}.
 Then we reduce the integral by computing the Dirac-delta function. In this process, there appears a singularity of $1/\bar{g}$ and the exponential growth with respect to the $p$ variable from $\exp(p^0-p'^0)$. We will explain this more in detail in Section \ref{sec:singbarg}.

  Once we obtain the estimates of the linear terms and the nonlinear terms, we define an iteration scheme to construct the local-in-time solution as
\begin{align}\label{itera}
	(\partial_t +\hat{p}\cdot \nabla_x)F^{n+1}=Q(F^n,F^n,F^{n+1},F^n).
\end{align}
Regarding the case of fermions, the solution $F$ has to be bounded by $1$ from above. Therefore, we also need to prove that the function $F^{n+1}$ is bounded in the closed interval $[0,1]$ in each iteration scheme based on the induction hypothesis that $F^n$ is in $[0,1]$. Then we obtain that the collision operator $Q(F^n,F^n,F^{n+1},F^n)$ is well-defined.
On the right-hand side of \eqref{itera}, note that we place $F^{n+1}$ in the $p$ variable position of the collision operator $Q$ (i.e., at the third input of $Q(\cdot,\cdot,\cdot,\cdot)$). This is by the specific structure of the nonlinear operator $Q$ from \eqref{Q}. Once we place $F^{n+1}$ as a third input of $Q$ then we can rewrite \eqref{itera} in a more clear structure
\begin{align}\label{itera2}
\{\partial_t +\hat{p}\cdot \nabla_x-\tau G(F^n)+R(F^n)\}F^{n+1}=G(F^n), 
\end{align}where $G$ and $R$ is now defined as
\begin{align*}
G(F_1,F_2,F_4)&=\int_{\mathbb{R}^3}dq\int_{\mathbb{S}^2}dw~v_{\o}\sigma(g,\theta)F_1(p')F_2(q')(1+\tau F_4(q)), \cr
R(F_1,F_2,F_4)&=\int_{\mathbb{R}^3}dq\int_{\mathbb{S}^2}dw~v_{\o}\sigma(g,\theta)(1+\tau F_1(p'))(1+\tau F_2(q'))F_4(q).
\end{align*}
Also, we can show that the boundedness $0 \leq F^n \leq 1$ implies that $0\leq G(F^n)$ and $0 \leq R(F^n)$, which guarantees the boundedness of $F^{n+1}$. 
Then the additional linearization of $F^{n+1}=m+\sqrt{m+\tau m^2}f^{n+1}$ results in creating the dissipation term $\nu(p)f^{n+1}$ in the left-hand side of \eqref{itera2}, which would then result in the exponential decay of the perturbation $f^{n+1}$ in the $L^2$ sense. By the induction argument, we obtain the uniform boundedness of the energy locally in time.

Then the standard way of extending the local existence to the global one is to eliminate the dissipation of the linear part $L$. Similarly to the Newtonian case, we substitute $f=(I-P)f+Pf$ on each side of \eqref{pert1} where $Pf$ is defined by the orthonormal projection of \eqref{basis1}.
Then the expansion of the linear term $(\partial_t+\hat{p}\cdot\nabla_x)Pf$ yields a linear combination with respect to the $14$-basis. Thus we can achieve the following coercivity estimate:
\begin{align*}
	\sum_{|\alpha|\leq N}\langle L\partial^{\alpha}f, \partial^{\alpha}f\rangle_{L^2_{x,p}} &\geq \delta \sum_{|\alpha|\leq N}\|\partial^{\alpha}f\|_{x,\nu}^2,
\end{align*}
for some positive constant $\delta>0$. This full coercivity estimate enables the extension of the local-in-time solution to a global-in-time solution.

\subsection{Main difficulties and our strategy}\label{sec:novelty}
In this subsection, we present the difficulties that arise from the estimates of the nonlinear terms. In the relativistic quantum case, there appear new types of nonlinear terms involving both pre- and post-collisional momenta at the same time as $f(p)f(p')$ and $f(p)f(q')$. In general, this kind of terms has been expected to appear only in the non-cutoff Boltzmann theory in the non-quantum case. In the non-cutoff Boltzmann theory, it has been considered very crucial to understand the change of variables $q\mapsto p'$ or $q'$ and the cancellation lemma to understand the fractional diffusive behavior \cite{MR2784329,Jang2016}. 
In the non-relativistic case, these new nonlinear terms can be handled because $|\partial p'/\partial p|$ and $|\partial q'/\partial q|$ are bounded from below, as in \cite{MR1857879,MR2525118}. However, in the relativistic case under the \textit{center-of-momentum} frame, such a positive uniform lower-bound of the Jacobian of the collision map does not exist \cite{2006.02540}.

The remedy of the issue has been introduced in  \cite{MR635279,MR1321370,MR2728733} in the \textit{center-of-momentum} frame \eqref{com} where the linear term $K_2$ of the difficulty has been calculated. Namely, the authors lift the $dq$ integral to the energy-momentum four vector integral $dq^{\mu}$ by imposing an additional Dirac-delta function, and they take a change of variables via a suitable Lorentz transform. In this paper, we also follow a similar technique to deal with the nonlinearity that occurs in dealing with the terms $\Gamma(f)$ and $T(f)$. In this direction, we however still encounter several other additional difficulties on the nonlinear term involving $f(p)f(p')$ as below. Let us first denote the nonlinear term $\Gamma_{2,1}$ involving $f(p)f(p')$ as 
\begin{multline}\label{gamma21o}
\big| \langle \Gamma_{2,1}(f,h) ,\eta \rangle_{L^2_p} \big|
\leq \int_{\mathbb{R}^3}\frac{dp}{p^0}\int_{\mathbb{R}^3}\frac{dq}{q^0}\int_{\mathbb{R}^3}\frac{dp'}{p'^0}\int_{\mathbb{R}^3}\frac{dq'}{q'^0} s\sigma(g,\theta)  \cr
\times  \delta^{(4)}(p^{\mu}+q^{\mu}-p'^{\mu}-q'^{\mu})J(q^0)J(p'^0/2)|f(p)| |h(p')| |\eta(p)|,
\end{multline}
and the integral part with respect to the measure $dqdq'$ as 
\begin{align}\label{rela B}
B=\int_{\mathbb{R}^3}\frac{dq}{q^0}\int_{\mathbb{R}^3}\frac{dq'}{q'^0} s\sigma(g,\theta) \delta^{(4)}(p^{\mu}+q^{\mu}-p'^{\mu}-q'^{\mu})J(q^0).
\end{align}
Then we present the three main difficulties that arise regarding the nonlinear terms.
\subsubsection{The Lorentz-invariant measure $\frac{dp}{p^0}$ and the nonlinear structure of $s\sigma(g,\theta)$}
The first difficulty arises from the Lorentz-invaraint measure $\frac{dp}{p^0}$ including the fraction of the energy and the nonlinear structure of the differential cross section $s\sigma(g,\theta)$ with respect to the collision variables $p$ and $q$. A good way to remove the fraction of the energy is to lift $dq$ and $dq'$ integrals to the energy momentum $4$-vector $dq^{\mu}$ and $dq'^{\mu}$ integrals by considering extra Dirac-delta and unit step functions as
\begin{multline*}
B=\int_{\mathbb{R}^4}dq^{\mu}\int_{\mathbb{R}^4}dq'^{\mu} s\sigma(g,\theta) \delta^{(4)}(p^{\mu}+q^{\mu}-p'^{\mu}-q'^{\mu}) J(q^0) u(q^0)u(q'^0)\delta(q^{\mu}q_{\mu}+1)\delta(q'^{\mu}q'_{\mu}+1).
\end{multline*}
If we simply eliminate the $dq'^{\mu}$ integral by computing the Dirac-delta function of $\delta^{(4)}(p^{\mu}+q^{\mu}-p'^{\mu}-q'^{\mu})$, the rest of the integral becomes highly complicated to deal with. Thus, motivated by de Groot et al. in \cite{MR635279}, we instead apply a symmetric change of variables $\bar{q}^{\mu}=q^{\mu}+q'^{\mu}$ and $\bar{q}'^{\mu}=q^{\mu}-q'^{\mu}$. 
Despite the nonlinear structure of the $s$, $g$ and $\cos\theta$ in \eqref{s}, \eqref{g} and \eqref{cos}, respectively, we can also represent them as the terms that depend only on the variables $p^{\mu}$, $p'^{\mu}$, $\bar{q}^{\mu}$ and $\bar{q}'^{\mu}$ (See Lemma \ref{g comp}) as follows:
\begin{align*}
	g_c^2=\bar{g}^2-\frac{1}{2}(p^{\mu}+p'^{\mu})(\bar{q}_{\mu}-p_{\mu}-p'_{\mu}), \quad s_c=g_c^2+4, \quad 	\cos\theta_c= 1-2\frac{\bar{g}^2}{g_c^2}.
\end{align*}
Then we can have $B$ in the following form 
\begin{align*}
	B&=\frac{1}{4}\int_{\mathbb{R}^4\times \mathbb{R}^4}d\Theta(\bar{q}^{\mu},\bar{q}'^{\mu}) s_c\sigma(g_c,\theta_c) \delta^{(4)}(p^{\mu}-p'^{\mu}+\bar{q}'^{\mu}) J\left(\frac{\bar{q}^0+\bar{q}'^0}{2}\right),
\end{align*}
where
\begin{align*}
	d\Theta(\bar{q}^{\mu},\bar{q}'^{\mu})&=  d\bar{q}^{\mu}d\bar{q}'^{\mu}u(\bar{q}^0)u(\bar{s}-4)\delta((\bar{q}^{\mu}\bar{q}_{\mu}+\bar{q}'^{\mu}\bar{q}'_{\mu})+4)\delta(\bar{q}^{\mu}\bar{q}'_{\mu}).
\end{align*}
By substituting $\bar{q}'^{\mu}=p'^{\mu}-p^{\mu}$, we now reduce the $4$-dimensional Dirac-delta function with the energy-momentum $4$-vector $\delta^{(4)}(p^{\mu}+q^{\mu}-p'^{\mu}-q'^{\mu}) $ and obtain a more intuitive form of $B$ as 
\begin{multline}\label{B2}
B=\frac{1}{4}\int_{\mathbb{R}^4}d\bar{q}^{\mu}u(\bar{q}^0)u(\bar{s}-4)\delta(\bar{q}^{\mu}\bar{q}_{\mu}+(p'^{\mu}-p^{\mu})(p'_{\mu}-p_{\mu})+4)\cr
\times \delta(\bar{q}^{\mu}(p'_{\mu}-p_{\mu})) s_c\sigma(g_c,\theta_c) J\left(\frac{\bar{q}^0+p'^0-p^0}{2}\right).
\end{multline}
\subsubsection{Singularity with respect to $\bar{g}$}\label{sec:singbarg}
Then another difficulty arises from the singularity in the relative momentum $\bar{g}$, which occurs from the reduction of the second Dirac-delta function. We first remark that the second Dirac-delta function of \eqref{B2} consists of an inner product between energy momentum $4$-vectors. Motivated by the explicit form of the Lorentz transform by Strain \cite{MR2728733}, we apply the Lorentz trasnform which converts $p'_{\mu}-p_{\mu}$ to $(0,0,0,\bar{g})$ (i.e. $\Lambda(p'_{\mu}-p_{\mu})=(0,0,0,\bar{g})$). Then we obtain
\begin{align*}
\delta(\bar{q}^{\mu}(p'_{\mu}-p_{\mu})) = \delta(\Lambda \bar{q}^{\mu}\Lambda(p'_{\mu}-p_{\mu})) = \delta(\bar{q}^3\bar{g}) = \frac{1}{\bar{g}}\delta(\bar{q}^3),
\end{align*}
where we used that the Lorentz transform is invariant under the inner-product and that $\delta(ax)=\frac{1}{a}\delta(x)$. We can now see that one $\bar{g}$ in the Dirac-delta function comes out as $1/\bar{g}$ and creates an additional singularity. However, motivated by \eqref{cosa}, we use the half angle formula \cite[page 277, (A.11)]{MR635279} of the $\cos\theta$ and observe that
\[\sin^2(\theta/2) = \frac{1-\cos\theta}{2}=\frac{\bar{g}^2}{g^2}.\]
Combining with the assumption of the differential cross section, we obtain
\begin{align*}
\sigma(g,\theta)=g\sin\theta = 2g\sin\frac{\theta}{2}\cos\frac{\theta}{2} = 2g\sqrt{\frac{1-\cos\theta}{2}}\cos\frac{\theta}{2} = 2\bar{g} \cos\frac{\theta}{2}.
\end{align*}
This allows us to eliminate the singularity of $\bar{g}$.

\subsubsection{Exponential growth}The last difficulty is regarding the last multiplier on the right-hand side of the equation \eqref{B2}. Since the function $J(\bar{q}^0/2)$ is contained in $d\bar{q}^0$ integral, we have an exponential decay in $p'^0$ and an exponential growth in $p^0$ from $J\left((p'^0-p^0)/2\right)$ at the same time. The decay for $p'^0$ is beneficial for the upper-bound estimate, but there is a problematic term of the exponential growth of $p^0$ even in \eqref{gamma21o}.
However, we prove that the remaining part of the right-hand side of \eqref{B2} includes the following exponential decaying factor
\begin{align*}
\exp\left(-\sqrt{\frac{(p^0+p'^0)^2}{4}-\frac{|p\times p'|^2}{\bar{g}^2}}\right).
\end{align*}
Then, by the estimates in \cite[Lemma 3.1, (iii) and (iv)]{MR1211782} of
\begin{align*}
\frac{(p^0+p'^0)^2}{4}-\frac{|p\times p'|^2}{\bar{g}^2} = |p-p'|^2\frac{\bar{g}^2+4}{4\bar{g}^2} 
\geq \max\left\{\frac{\bar{g}^2}{4}+1,\frac{1}{4}|p-p'|^2\right\},
\end{align*}
we can have an exponential decay of $\exp\left(-|p-p'|/2\right)$. Since the difference of the energy $|p'^0-p^0|$ can further be bounded by the difference of the momentum $|p'-p|$, the exponential growth can be absorbed by the exponential decay as 
\begin{align*}
J\left(\frac{p'^0-p^0}{2}\right)\exp\left(-\sqrt{\frac{(p^0+p'^0)^2}{4}-\frac{|p\times p'|^2}{\bar{g}^2}}\right) \leq e^{-\frac{1}{2}(p'^0-p^0)}e^{-\frac{1}{2}|p-p'|} \leq 1.
\end{align*}
Then we can have properly weighted $L^2$ bounds for the nonlinear terms.

\subsection{Outline of the paper}
This paper is organized as follows. 
In Section \ref{sec:linearization}, we linearize the collision operator of the relativistic quantum Boltzmann equation nearby a global equilibrium. In Section \ref{sec:nonlinear}, we establish several estimates on the linear and the nonlinear terms. Section \ref{sec:localintime} is devoted to constructing the unique local-in-time classical solution. In the last section, we prove the coercivity estimate and establish the global-in-time classical solution.

\section{Linearization of the relativistic quantum Boltzmann equation}\label{sec:linearization}
In this section, we introduce the reformulation of the equation \eqref{RQBE0} via the linearization of the relativistic quantum Boltzmann equation nearby the global equilibrium:
\begin{align*}
	m(p)= \frac{1}{e^{ap^0+c}-\tau }.
\end{align*}
\begin{proposition}\label{linearization}
If we substitute $F=m+\sqrt{m+\tau m^2}f$ in \eqref{RQBE}, then we have
\begin{align*}
	\partial_t f +  \hat{p}\cdot \nabla_x f +Lf &=\Gamma(f) +T(f)
\end{align*}
where the linear term $Lf$ is decomposed as following form:
\begin{align*}
	Lf&= \nu(p)f +K_1f - K_2f,
\end{align*}
where the collision frequency $\nu(p)$ is given by
\begin{align}\label{nu}
	\nu(p) &=\frac{1}{1+\tau m(p)}\int_{\mathbb{R}^3}dq\int_{\mathbb{S}^2}dw~ v_{\o}\sigma(g,\theta)m(q)(1+\tau m(p'))(1+\tau m(q')),
\end{align}
and the compact operator $K_1$ and $K_2$ are defined by
\begin{align}\label{Kf}
\begin{split}
	K_1f(p)&=\int_{\mathbb{R}^3}dq\int_{\mathbb{S}^2}dw~ v_{\o}\sigma(g,\theta) \sqrt{m+\tau m^2(p')}\sqrt{m+\tau m^2(q')}f(q), \cr
	K_2f(p)&=2 \int_{\mathbb{R}^3}dq\int_{\mathbb{S}^2}dw~ v_{\o}\sigma(g,\theta)\sqrt{m+\tau m^2(q)}\sqrt{m+\tau m^2(q')} f(p').
\end{split}
\end{align}
The nonlinear term $\Gamma(f)$ is represented as follows:
\begin{align*}
	\Gamma(f)&= \sum_{i=1}^6\Gamma_i(f,f), \quad T(f) = \sum_{i=1}^4T_i(f,f,f).
\end{align*}
We denote the precise definition of nonlinear terms at the end of this proof.
\end{proposition}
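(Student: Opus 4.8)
The plan is to substitute $F=m+\sqrt{m+\tau m^2}\,f$ into \eqref{RQBE}, expand the quartic collision operator $Q$ of \eqref{Q} in powers of the perturbation $f$, and read off the homogeneous pieces. Since $m=m(p)$ is independent of $(t,x)$, the transport operator acts only on $f$, so after dividing by $\sqrt{m+\tau m^2}$ (which is strictly positive for the admissible ranges of $a,c$) it suffices to analyze $(m+\tau m^2)^{-1/2}\,Q\big(m+\sqrt{m+\tau m^2}f\big)$. Each of the four inputs of $Q$ enters through a binomial: $F(p')=m(p')+\sqrt{(m+\tau m^2)(p')}\,f(p')$ and likewise for $F(q')$, while $1+\tau F(p)=(1+\tau m(p))+\tau\sqrt{(m+\tau m^2)(p)}\,f(p)$ and likewise for $1+\tau F(q)$. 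Expanding the two products appearing in the gain and loss terms and collecting according to how many factors supply a perturbation decomposes $Q\big(m+\sqrt{m+\tau m^2}f\big)$ into homogeneous parts of degree $0,1,2,3,4$ in $f$.

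Two of these parts vanish. The degree-$0$ part is $Q(m,m,m,m)$, which is identically zero: by \eqref{m} and the energy component $p^0+q^0=p'^0+q'^0$ of the conservation law \eqref{conserv} one has the equilibrium identity
\begin{align*}
\frac{m(p)\,m(q)}{(1+\tau m(p))(1+\tau m(q))}=e^{-a(p^0+q^0)-2c}=e^{-a(p'^0+q'^0)-2c}=\frac{m(p')\,m(q')}{(1+\tau m(p'))(1+\tau m(q'))},
\end{align*}
so the gain and loss densities agree. The degree-$4$ part contains exactly one term from the gain and one from the loss, and both equal $\tau^2\sqrt{(m+\tau m^2)(p')(m+\tau m^2)(q')(m+\tau m^2)(p)(m+\tau m^2)(q)}\,f(p')f(q')f(p)f(q)$, so they cancel in the difference; this is the announced disappearance of the quartic nonlinearity.

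For the degree-$1$ part there are eight terms, one for each choice of the single perturbed factor and each of gain/loss type, and I would group them by the argument of $f$. The two terms carrying $f(q)$ recombine, using the equilibrium identity to clear their common factor, into the integrand of $K_1$ from \eqref{Kf}, namely $v_{\o}\sigma\sqrt{(m+\tau m^2)(p')}\sqrt{(m+\tau m^2)(q')}\,f(q)$. For the four terms carrying $f(p')$ or $f(q')$ I would first apply the reflection $w\mapsto-w$ in the center-of-momentum parametrization \eqref{com}: it exchanges $p'\leftrightarrow q'$ and, by the Pythagorean identity \eqref{g triangle} together with \eqref{cosa}, leaves both $v_{\o}$ and $\sigma(g,\theta)$ invariant; after this substitution the $f(q')$ terms become copies of the $f(p')$ terms, which is the origin of the factor $2$ in $K_2$, and a second use of the equilibrium identity collapses the coefficient to $\sqrt{(m+\tau m^2)(q)}\sqrt{(m+\tau m^2)(q')}$, yielding $K_2$. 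Finally the two terms carrying $f(p)$ are not symmetrized; the equilibrium identity shows their gain and loss parts combine, the prefactor $1/(1+\tau m(p))$ in \eqref{nu} arising as $1-\tau m(p)/(1+\tau m(p))$, into $\nu(p)\sqrt{(m+\tau m^2)(p)}\,f(p)$ with $\nu$ as in \eqref{nu}. Dividing the whole degree-$1$ part by $\sqrt{m+\tau m^2}$ and matching signs gives exactly $-Lf=-\big(\nu(p)f+K_1f-K_2f\big)$.

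It remains to organize the degree-$2$ and degree-$3$ parts, which is pure bookkeeping. There are $\binom{4}{2}=6$ ways to select the pair of perturbed factors, each giving one bilinear expression (gain minus loss); dividing by $\sqrt{m+\tau m^2}$ and writing them out defines $\Gamma_i(f,f)$, $i=1,\dots,6$. Likewise the $\binom{4}{3}=4$ ways to select the triple of perturbed factors produce the trilinear $T_i(f,f,f)$, $i=1,\dots,4$, after division by $\sqrt{m+\tau m^2}$; their explicit formulas are then recorded at the end of the proof. The only genuinely delicate point is the degree-$1$ computation: identifying $\nu$, $K_1$, $K_2$ in precisely the stated form (with the correct signs and with the factor $2$ in $K_2$) requires using the equilibrium identity in combination with the reflection symmetry of the kernel, whereas everything else is expansion and relabeling.
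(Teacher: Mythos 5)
Your proposal is correct and follows essentially the same route as the paper: substitute, expand each of the four slots binomially, collect by degree in $f$, use the equilibrium identity $m(p')m(q')(1+\tau m(p))(1+\tau m(q))=(1+\tau m(p'))(1+\tau m(q'))m(p)m(q)$ to show the degree-$0$ part vanishes and to simplify the degree-$1$ coefficients into $\nu$, $K_1$, $K_2$, then invoke the reflection $w\mapsto -w$ (equivalently $\theta\mapsto\pi-\theta$, $\phi\mapsto\pi+\phi$) to merge the $f(p')$ and $f(q')$ pieces into $K_2$ with its factor of $2$, and observe the degree-$4$ terms cancel identically. The paper packages the degree-$1$ bookkeeping by first defining $Q_0,\dots,Q_4$ and assembling the symmetric expression \eqref{Lf} before reading off $\nu f$, $K_1f$, $K_2f$, whereas you group directly by the argument of $f$, but the content is the same.
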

\begin{proof}
We substitute $F=m+\sqrt{m+\tau m^2}f$ in \eqref{RQBE} to have
\begin{align*}
	\sqrt{m+\tau m^2}\partial_t f + \sqrt{m+\tau m^2} \hat{p}\cdot \nabla_x f = Q(m+\sqrt{m+\tau m^2}f).
\end{align*}
Dividing $\sqrt{m+\tau m^2}$ on each side gives an equation for the perturbation $f$:
\begin{align}\label{pertf2}
	\partial_t f +  \hat{p}\cdot \nabla_x f  &= \frac{1}{\sqrt{m+\tau m^2}}Q(m+\sqrt{m+\tau m^2}f).
\end{align}
To decompose the right-hand side into linear and nonlinear terms, we first define the zeroth-order-in-$f$ term $Q_0$:
\begin{align*}
Q_0=Q(m,m,m,m).
\end{align*}
As we can see in \eqref{Q}, the quantum collision operator includes $(1+\tau F)$ terms. Because of these terms, we cannot have the quad-linear property of $Q$. In other words, we have $Q(k+h,f,f,f)\neq Q(k,f,f,f)+Q(h,f,f,f)$. So we define the following four first-order-in-$f$ terms $Q_1$, $Q_2$, $Q_3$ and $Q_4$:
\begin{align*}
	Q_1=Q(m+\sqrt{m+\tau m^2}f,m,m,m)-Q_0, \cr Q_2=Q(m,m+\sqrt{m+\tau m^2}f,m,m)-Q_0, \cr
	Q_3=Q(m,m,m+\sqrt{m+\tau m^2}f,m)-Q_0, \cr Q_4=Q(m,m,m,m+\sqrt{m+\tau m^2}f)-Q_0.
\end{align*}
In view of this notation, the collection of the zeroth and the first-order terms in $f$ can be written as
\begin{align*}
\frac{1}{\sqrt{m+\tau m^2}} \left( \sum_{i=1}^4 Q_i  +Q_0 \right).
\end{align*}
Thus we divide the right-hand side of \eqref{pertf2} into the collection of zeroth, first-order terms and other terms as
\begin{align*}
\frac{1}{\sqrt{m+\tau m^2}}Q(m+\sqrt{m+\tau m^2}f) = -Lf + \Gamma(f),
\end{align*}
where
\begin{align*}
Lf=-\frac{1}{\sqrt{m+\tau m^2}}\sum_{i=1}^4 Q_i-\frac{1}{\sqrt{m+\tau m^2}}Q_0,
\end{align*}
and
\begin{align*}
	\Gamma(f)=\frac{1}{\sqrt{m+\tau m^2}}Q(m+\sqrt{m+\tau m^2}f)+Lf.
\end{align*}
We first calculate the linear part $Lf$. By definition of $Q_0$, we have
\begin{multline*}
Q_0=\int_{\mathbb{R}^3}dq\int_{\mathbb{S}^2}dw~v_{\o}\sigma(g,\theta) \bigg[m(p')m(q')(1+\tau m(p))(1+\tau m(q))\cr
 -(1+\tau m(p'))(1+\tau m(q'))m(p)m(q) \bigg] .
\end{multline*}
We observe from the conservation of energy
\[p^0+q^0=p'^0+q'^0,\]
that
\begin{align}\label{equal}
	m(p')m(q')(1+\tau m(p))(1+\tau m(q)) =(1+\tau m(p'))(1+\tau m(q')) m(p)m(q),
\end{align}
which gives
\[Q_0=0.\]
For the first-order linear term, an explicit computation gives
\begin{multline*}
	Q_1=\int_{\mathbb{R}^3}dq\int_{\mathbb{S}^2}dw~v_{\o}\sigma(g,\theta) \bigg[\sqrt{m+\tau m^2(p')}m(q')(1+\tau m(p))(1+\tau m(q)) \cr
-\tau  \sqrt{m+\tau m^2(p')}(1+\tau m(q'))m(p)m(q)\bigg]f(p'),
\end{multline*}
By \eqref{equal} on the first-order term $Q_1$, we have
\begin{multline*}
Q_1
=\int_{\mathbb{R}^3}dq\int_{\mathbb{S}^2}dw~v_{\o}\sigma(g,\theta) m(p)m(q)(1+\tau m(p'))(1+\tau m(q')) \cr
\quad \times \left(\frac{\sqrt{m+\tau m^2(p')}}{m(p')}-\tau \frac{\sqrt{m+\tau m^2(p')}}{1+\tau m(p')}\right)f(p'),
\end{multline*}
which is equal to 
\begin{align*}
Q_1&=\int_{\mathbb{R}^3}dq\int_{\mathbb{S}^2}dw~v_{\o}\sigma(g,\theta) m(p)m(q)(1+\tau m(p'))(1+\tau m(q'))  \frac{f(p')}{\sqrt{m+\tau m^2(p')}}.
\end{align*}
With similar computations, combining with $Q_2$,$Q_3$ and $Q_4$ yields
\begin{multline}\label{Lf}
\begin{split}
Lf= \frac{1}{\sqrt{m+\tau m^2(p)}} \int_{\mathbb{R}^3}dq\int_{\mathbb{S}^2}dw~v_{\o}\sigma(g,\theta)  m(p)m(q)(1+\tau m(p'))(1+\tau m(q')) 	\cr
\times \bigg(\frac{f(p)}{\sqrt{m+\tau m^2(p)}}+	\frac{f(q)}{\sqrt{m+\tau m^2(q)}}-\frac{f(p')}{\sqrt{m+\tau m^2(p')}}
-\frac{f(q')}{\sqrt{m+\tau m^2(q')}}\bigg).
\end{split}
\end{multline}
We can easily see that the first term of $Lf$ is equal to $\nu f$. We define the second term of $Lf$ as $K_1 f$: 
\begin{multline}\label{K1}
		K_1f=\frac{m(p)}{\sqrt{m+\tau m^2(p)}}\int_{\mathbb{R}^3}dq\int_{\mathbb{S}^2}dw~ v_{\o}\sigma(g,\theta) \frac{m(q)}{\sqrt{m+\tau m^2(q)}}\\\times(1+\tau m(p'))(1+\tau m(q'))f(q).
\end{multline}
We observe
\begin{align} \label{m/m-m^2}
	\frac{m(p)}{\sqrt{m+\tau m^2(p)}}= e^{-\frac{1}{2}(ap^0+c)}.
\end{align}
Combining with the energy conservation law gives 
\begin{align}\label{equal2}
	\frac{m(p)}{\sqrt{m+\tau m^2(p)}}\frac{m(q)}{\sqrt{m+\tau m^2(q)}}=\frac{m(p')}{\sqrt{m+\tau m^2(p')}}\frac{m(q')}{\sqrt{m+\tau m^2(q')}}.
\end{align}
Applying it to \eqref{K1}, we have 
\begin{align*}
	K_1f&=\int_{\mathbb{R}^3}dq\int_{\mathbb{S}^2}dw~ v_{\o}\sigma(g,\theta) \sqrt{m+\tau m^2(p')}\sqrt{m+\tau m^2(q')}f(q).
\end{align*}
We define the collection of the third and the fourth term of $Lf$ in \eqref{Lf} as $ - K_2f$:
\begin{multline}\label{K2feq}
K_2f=\frac{ m(p)}{\sqrt{m+\tau m^2(p)}}\int_{\mathbb{R}^3}dq\int_{\mathbb{S}^2}dw~ v_{\o}\sigma(g,\theta)m(q)\frac{1+\tau m(p')}{\sqrt{m+\tau m^2(p')}}(1+\tau m(q'))f(p') \cr
\quad+\frac{m(p)}{\sqrt{m+\tau m^2(p)}}\int_{\mathbb{R}^3}dq\int_{\mathbb{S}^2}dw~ v_{\o}\sigma(g,\theta)m(q)(1+\tau m(p'))\frac{1+\tau m(q')}{\sqrt{m+\tau m^2(q')}}f(q').
\end{multline}
We would call the first line of the $K_2f$ in \eqref{K2feq} as $K_{2,1}f$ and the second line of \eqref{K2feq} as $K_{2,2}f$.
We then write the $dw$ integral of $K_{2,2}$ in the spherical coordinate $w\mapsto(\phi,\theta)$ as in \eqref{w} as follows:
\begin{multline*}
K_{2,2}f=\frac{m(p)}{\sqrt{m+\tau m^2(p)}} \int_{\mathbb{R}^3}dq \int_{0}^{2\pi}d\phi\int_{0}^{\pi}\sin\theta ~d\theta~ v_{\o}\sigma(g,\theta)\\\times m(q)(1+\tau m(p'))\frac{1+\tau m(q')}{\sqrt{m+\tau m^2(q')}}f(q').
\end{multline*}
Then we apply the change of variables $\theta \rightarrow \pi-\theta$ and $\phi\rightarrow \pi+\phi$. The change of variables would result in the exchanged roles of $p'$ and $q'$, since $w$ in \eqref{w} changes into $-w$. Thus we have
\begin{multline*}
K_{2,2}f=\frac{m(p)}{\sqrt{m+\tau m^2(p)}} \int_{\mathbb{R}^3}dq \int_{\pi}^{3\pi}d\phi\int_{\pi}^{0}\sin\theta ~(-d\theta) \cr
\times v_{\o}\sigma(g,\pi-\theta)m(q)(1+\tau m(q'))\frac{1+\tau m(p')}{\sqrt{m+\tau m^2(p')}}f(p').
\end{multline*}
By the assumption of the differential cross section $\sigma(g,\theta)$ in \eqref{sigma}, we have $\sigma(g,\theta)=g\sin\theta =\sigma(g,\pi-\theta)$. This further gives
\begin{align*}
K_{2,2}f&=\frac{m(p)}{\sqrt{m+\tau m^2(p)}}\int_{\mathbb{R}^3}dq\int_{\mathbb{S}^2}dw ~ v_{\o}\sigma(g,\theta)m(q)(1+\tau m(q'))\frac{1+\tau m(p')}{\sqrt{m+\tau m^2(p')}}f(p').
\end{align*}
This shows that $K_{2,2}=K_{2,1}$. Thus we have
\begin{align}\label{K2}
\begin{split}
K_2f&=2 K_{2,1}f\\
&=2\frac{ m(p)}{\sqrt{m+\tau m^2(p)}}\int_{\mathbb{R}^3}dq\int_{\mathbb{S}^2}dw~ v_{\o}\sigma(g,\theta)m(q)\frac{1+\tau m(p')}{\sqrt{m+\tau m^2(p')}}(1+\tau m(q'))f(p').
\end{split}
\end{align}
Similarly, we observe that 
\begin{align} \label{1-m/m-m^2}
	\frac{1+\tau m(p)}{\sqrt{m+\tau m^2(p)}}= e^{\frac{1}{2}(ap^0+c)}.
\end{align}
Combining with \eqref{m/m-m^2}, we have
\begin{align}\label{equal3}
	\frac{m(p)}{\sqrt{m+\tau m^2(p)}}\frac{1+\tau m(p')}{\sqrt{m+\tau m^2(p')}}=\frac{1+\tau m(q)}{\sqrt{m+\tau m^2(q)}}\frac{m(q')}{\sqrt{m+\tau m^2(q')}}.
\end{align}
Substituting it in \eqref{K2}, we have 
\begin{align*}
K_2f&=2\int_{\mathbb{R}^3}dq\int_{\mathbb{S}^2}dw~ v_{\o}\sigma(g,\theta)\sqrt{m+\tau m^2(q)}\sqrt{m+\tau m^2(q')}  f(p').
\end{align*}
This completes the derivation of the linear term $\nu f$, $K_1f$ and $K_2f$.

Now we consider the nonlinear terms. Since the quantum collision operator is not quad-linear (Recall that $Q(k+h,f,f,f)\neq Q(k,f,f,f)+Q(h,f,f,f)$), the second-order nonlinear term is highly complicated to be represented. Thus, we first observe one of the second-order nonlinear terms involving $f(
p')$ and $f(q')$:
\begin{multline*}
\int_{\mathbb{R}^3}dq\int_{\mathbb{S}^2}dw ~  \frac{v_{\o}\sigma(g,\theta)}{\sqrt{m+\tau m^2(p)}}\\\times  \big[\sqrt{m+\tau m^2(p')}f(p')\sqrt{m+\tau m^2(q')}f(q')(1+\tau m(p))(1+\tau m(q)) \cr
-\sqrt{m+\tau m^2(p')}f(p')\sqrt{m+\tau m^2(q')}f(q')m(p)m(q)\big].
\end{multline*}There are 6 second-order nonlinear terms of the same kind similar to the term above where the number 6 is coming from the number of choices for choosing 2 variables among the four variables $p$, $q$, $p'$, and $q'$. We represent all of the second-order nonlinear terms ($6$ nonlinear terms) as follows: 
\begin{align}\label{Gamma}
\begin{split}
\Gamma(f,h) &= \int_{\mathbb{R}^3}dq\int_{\mathbb{S}^2}dw \frac{v_{\o}\sigma(g,\theta)}{\sqrt{m+\tau m^2(p)}} \cr
&\times \bigg[(m(p')m(q')-(1+\tau m(p'))(1+\tau m(q')))\sqrt{m+\tau m^2(p)}f(p)\sqrt{m+\tau m^2(q)}h(q)	\cr
&\quad +\tau (m(q')(1+\tau m(q))-m(q)(1+\tau m(q')))\sqrt{m+\tau m^2(p)}f(p)\sqrt{m+\tau m^2(p')}h(p')	\cr
&\quad +\tau (m(q')(1+\tau m(p))-m(p)(1+\tau m(q')))\sqrt{m+\tau m^2(q)}f(q)\sqrt{m+\tau m^2(p')}h(p')	\cr
&\quad +\tau (m(p')(1+\tau m(q))-m(q)(1+\tau m(p')))\sqrt{m+\tau m^2(p)}f(p)\sqrt{m+\tau m^2(q')}h(q')	\cr
&\quad +\tau (m(p')(1+\tau m(p))-m(p)(1+\tau m(p')))\sqrt{m+\tau m^2(q)}f(q)\sqrt{m+\tau m^2(q')}h(q')	\cr
&\quad +(1+\tau m(p))((1+\tau m(q))-m(p)m(q))\sqrt{m+\tau m^2(p')}f(p')\sqrt{m+\tau m^2(q')}h(q')\bigg]\cr
&=\Gamma_1(f,h)+\Gamma_2(f,h)+\Gamma_3(f,h)+\Gamma_4(f,h)+\Gamma_5(f,h)+\Gamma_6(f,h).
\end{split}
\end{align}
We also denote as
\begin{align*}
\Gamma(f) &= \sum_{1 \leq i \leq 6}\Gamma_i(f,f),
\end{align*}when $f=h$.
Lastly, we consider the following third-order nonlinear terms involving $f(p')$, $f(q')$ and $f(p)$:
\begin{multline*}
\int_{\mathbb{R}^3}dq\int_{\mathbb{S}^2}dw ~  v_{\o} \sigma(g,\theta)\big[\tau \sqrt{m+\tau m^2(p')}f(p')\sqrt{m+\tau m^2(q')}f(q')f(p)(1+\tau m(q)) \cr
-\sqrt{m+\tau m^2(p')}f(p')\sqrt{m+\tau m^2(q')}f(q')f(p)m(q)\big].
\end{multline*}
Similarly, we represent all of the third-order nonlinear terms ($4$ nonlinear terms) as
\begin{align}\label{T}
\begin{split}
T_1(f,h,\eta)&= -\tau \int_{\mathbb{R}^3}dq\int_{\mathbb{S}^2}dw~v_{\o}\sigma(g,\theta) f(p)\sqrt{m+\tau m^2(q)}h(q)\sqrt{m+\tau m^2(p')}\eta(p'), \cr 
T_2(f,h,\eta)&= -\tau \int_{\mathbb{R}^3}dq\int_{\mathbb{S}^2}dw~v_{\o}\sigma(g,\theta) f(p)\sqrt{m+\tau m^2(q)}h(q)\sqrt{m+\tau m^2(q')}\eta(q'), \cr
T_3(f,h,\eta)&= \tau  \int_{\mathbb{R}^3}dq\int_{\mathbb{S}^2}dw~v_{\o}\sigma(g,\theta) f(p)\sqrt{m+\tau m^2(p')}h(p')\sqrt{m+\tau m^2(q')}\eta(q'), \cr
T_4(f,h,\eta)&= \tau \frac{1}{\sqrt{m+\tau m^2(p)}}\int_{\mathbb{R}^3}dq\int_{\mathbb{S}^2}dw~v_{\o}\sigma(g,\theta) \sqrt{m+\tau m^2(q)}f(q) \cr
&\quad \times \sqrt{m+\tau m^2(p')}h(p')\sqrt{m+\tau m^2(q')}\eta(q').
\end{split}
\end{align}
Similarly, we define 
\begin{align*}
T(f)&= \sum_{1 \leq i \leq 4}T_i(f,f,f).
\end{align*}
We can easily check that the fourth-order nonlinear term is cancelled. 
\end{proof}
By the linearization proposition above and substituting $F=m+\sqrt{m+\tau m^2}f$ in \eqref{RQBE}, we obtain the linearized equation for the relativistic quantum Boltzmann model \eqref{RQBE} as follows: 
\begin{align}\label{pertf}
\begin{split}
\partial_tf+\hat{p}\cdot\nabla_xf +Lf&= \Gamma(f)+T(f), \cr
f(x,p,0) &= f_0(x,p). 
\end{split}
\end{align}
where $f_0(x,p) = (F_0(x,p)-m)/\sqrt{m+\tau m^2}$. Then the conservation laws \eqref{NPE0} can be written as following form: 
\begin{align}\label{consf}
\begin{split}
\int_{\mathbb{T}^3}dx\int_{\mathbb{R}^3}dp~f(x,p,t)\sqrt{m+\tau m^2} &= \int_{\mathbb{T}^3}dx\int_{\mathbb{R}^3}dp~f_0(x,p)\sqrt{m+\tau m^2}, \cr
\int_{\mathbb{T}^3}dx\int_{\mathbb{R}^3}dp~f(x,p,t)p\sqrt{m+\tau m^2} &= \int_{\mathbb{T}^3}dx\int_{\mathbb{R}^3}dp~f_0(x,p)p\sqrt{m+\tau m^2}, \cr
\int_{\mathbb{T}^3}dx\int_{\mathbb{R}^3}dp~f(x,p,t)p^0\sqrt{m+\tau m^2} &= \int_{\mathbb{T}^3}dx\int_{\mathbb{R}^3}dp~f_0(x,p)p^0\sqrt{m+\tau m^2}.
\end{split}
\end{align}
Now we state several useful properties for the linear term $Lf$.
\begin{lemma}\label{symmetric Lf} For any smooth function $\phi$, we have
\begin{align*}
	&\int_{\mathbb{R}^3}dp\int_{\mathbb{R}^3}dq\int_{\mathbb{S}^2}dw~v_{\o}\sigma(g,\theta)m(p)m(q)(1+\tau m(p'))(1+\tau m(q')) \cr
	&\times	\bigg(\frac{f(p)}{\sqrt{m+\tau m^2(p)}}+\frac{f(q)}{\sqrt{m+\tau m^2(q)}}	-\frac{f(p')}{\sqrt{m+\tau m^2(p')}}
	-\frac{f(q')}{\sqrt{m+\tau m^2(q')}}\bigg)\phi(p) 	\cr
	&=\int_{\mathbb{R}^3}dp\int_{\mathbb{R}^3}dq\int_{\mathbb{S}^2}dw~v_{\o}\sigma(g,\theta)m(p)m(q)(1+\tau m(p'))(1+\tau m(q')) \cr
	&\times	\bigg(\frac{f(p)}{\sqrt{m+\tau m^2(p)}}+\frac{f(q)}{\sqrt{m+\tau m^2(q)}}	-\frac{f(p')}{\sqrt{m+\tau m^2(p')}}
	-\frac{f(q')}{\sqrt{m+\tau m^2(q')}}\bigg)\phi(q)	\cr
	&=\int_{\mathbb{R}^3}dp\int_{\mathbb{R}^3}dq\int_{\mathbb{S}^2}dw~v_{\o}\sigma(g,\theta)m(p)m(q)(1+\tau m(p'))(1+\tau m(q')) \cr
	&\times	\bigg(\frac{f(p)}{\sqrt{m+\tau m^2(p)}}+\frac{f(q)}{\sqrt{m+\tau m^2(q)}}	-\frac{f(p')}{\sqrt{m+\tau m^2(p')}}
	-\frac{f(q')}{\sqrt{m+\tau m^2(q')}}\bigg)(-\phi(p')) 	\cr
	&=\int_{\mathbb{R}^3}dp\int_{\mathbb{R}^3}dq\int_{\mathbb{S}^2}dw~v_{\o}\sigma(g,\theta)m(p)m(q)(1+\tau m(p'))(1+\tau m(q')) \cr
	&\times	\bigg(\frac{f(p)}{\sqrt{m+\tau m^2(p)}}+\frac{f(q)}{\sqrt{m+\tau m^2(q)}}	-\frac{f(p')}{\sqrt{m+\tau m^2(p')}}
	-\frac{f(q')}{\sqrt{m+\tau m^2(q')}}\bigg)(-\phi(q')).
\end{align*}
\end{lemma}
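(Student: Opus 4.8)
The plan is to prove all four equalities by passing from the reduced collision integral back to the manifestly symmetric Lorentz-invariant representation, where the pre--post exchange becomes a harmless relabeling of dummy variables; this is the relativistic quantum analogue of the standard symmetrization of the weak form of the Boltzmann collision operator. Write $M := m(p)m(q)(1+\tau m(p'))(1+\tau m(q'))$ and
\begin{align*}
\Phi := \frac{f(p)}{\sqrt{m+\tau m^2(p)}} + \frac{f(q)}{\sqrt{m+\tau m^2(q)}} - \frac{f(p')}{\sqrt{m+\tau m^2(p')}} - \frac{f(q')}{\sqrt{m+\tau m^2(q')}},
\end{align*}
so that each of the four integrals in the statement has the form $I[\psi] := \int_{\mathbb{R}^3}dp\int_{\mathbb{R}^3}dq\int_{\mathbb{S}^2}dw\, v_{\o}\,\sigma(g,\theta)\,M\,\Phi\,\psi$ with $\psi$ equal to one of $\phi(p)$, $\phi(q)$, $-\phi(p')$, $-\phi(q')$, and the claim is that these four quantities coincide. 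First I would undo the reduction that produced \eqref{Q} from \eqref{C}--\eqref{W}: by the standard computation of the Lorentz-invariant collision measure (see \cite{MR635279,MR2765751}), for any integrand $h=h(p,q,p',q')$ one has
\begin{multline*}
\int_{\mathbb{R}^3}dq\int_{\mathbb{S}^2}dw\,v_{\o}\,\sigma(g,\theta)\,h \\ = \frac{1}{p^0}\int_{\mathbb{R}^3}\frac{dq}{q^0}\int_{\mathbb{R}^3}\frac{dp'}{p'^0}\int_{\mathbb{R}^3}\frac{dq'}{q'^0}\,\frac{s}{2}\,\sigma(g,\theta)\,\delta^{(4)}(p^\mu+q^\mu-p'^\mu-q'^\mu)\,h,
\end{multline*}
with $s$, $g$, $\theta$ now the functions of the four momenta from \eqref{s}, \eqref{g}, \eqref{cos}; applying this with $h=M\Phi$, integrating against $\phi(p)\,dp$, and doing the same for the other three puts $I[\psi]$ into the fully symmetric four-fold form.

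Next I would record the symmetries of the kernel $\frac{s}{2}\,\sigma(g,\theta)\,\delta^{(4)}\,M$ on the support of $\delta^{(4)}$. There the conservation law \eqref{conserv} holds, so $p^\mu q_\mu=p'^\mu q'_\mu$ and hence $s(p,q)=s(p',q')$, $g(p,q)=g(p',q')$; the angle $\cos\theta=(p^\mu-q^\mu)(p'_\mu-q'_\mu)/g^2$ from \eqref{cos} is manifestly invariant under $(p,q)\leftrightarrow(p',q')$ and merely changes sign under $p\leftrightarrow q$, so $\sigma(g,\theta)=g\sin\theta$ is unchanged in either case, just as in the $K_{2,2}=K_{2,1}$ step of the proof of Proposition \ref{linearization}; $\delta^{(4)}$ is even; and the identity \eqref{equal} coming from energy conservation is exactly the statement that $M=m(p)m(q)(1+\tau m(p'))(1+\tau m(q'))$ equals $m(p')m(q')(1+\tau m(p))(1+\tau m(q))$, while $M$ is visibly symmetric in $(p,q)$. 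Therefore the kernel is invariant under the two relabelings $(p,q)\mapsto(q,p)$ and $(p,q,p',q')\mapsto(p',q',p,q)$, each of which preserves the product measure with Jacobian $1$. Since $\Phi$ is symmetric in $(p,q)$ and satisfies $\Phi(p',q',p,q)=-\Phi(p,q,p',q')$, the first relabeling sends $I[\phi(p)]$ to $I[\phi(q)]$, and the second sends $I[\phi(p)]$ to $-I[\phi(p')]=I[-\phi(p')]$ and $I[\phi(q)]$ to $-I[\phi(q')]=I[-\phi(q')]$; chaining these three identities gives $I[\phi(p)]=I[\phi(q)]=I[-\phi(p')]=I[-\phi(q')]$, which is the lemma.

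The one genuine point is the lift itself. One cannot instead change variables $(p,q,w)\mapsto(p',q')$ directly inside the reduced integral, because in the center-of-momentum frame the Jacobian of the collision map has no positive uniform lower bound \cite{2006.02540}; passing to the four-fold Lorentz-invariant integral, in the spirit of \cite{MR635279,MR1321370,MR2728733}, is precisely what turns the pre--post exchange into a permutation of dummy variables. Everything after that is bookkeeping: checking the stated invariances of $s$, $g$, $\sigma$ and $M$ on the collision manifold and tracking the sign that $\Phi$ picks up. As a remark, averaging the four identities is exactly what will rewrite $\langle Lf,\phi\rangle_{L^2_p}$ as a symmetric bilinear form $\tfrac14\int v_{\o}\sigma\,m(p)m(q)(1+\tau m(p'))(1+\tau m(q'))\,\Phi\,\Phi_\phi$ with $\Phi_\phi$ denoting $\Phi$ with $f$ replaced by $\phi$, which is the source of the coercivity of $L$ used later.
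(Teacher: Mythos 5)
Your proof is correct and arrives at the same conclusion via essentially the same symmetries that the paper invokes: invariance of the kernel under $p\leftrightarrow q$ and under the pre--post exchange $(p,q)\leftrightarrow(p',q')$, together with the symmetry of $M$ (via \eqref{equal}) and the sign behavior of $\Phi$. The paper's own proof is a two-line citation to the standard change of variables in \cite{MR1379589}, whereas you first lift to the manifestly Lorentz-invariant $\delta^{(4)}$ representation so that the pre--post exchange becomes a relabeling of dummy variables; that is a legitimate and arguably cleaner way to make the symmetry transparent, and it matches the representation the paper itself uses later for the nonlinear estimates.

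One remark on your ``one genuine point'': the claim that one \emph{cannot} change variables $(p,q,w)\mapsto(p',q',w)$ directly because of \cite{2006.02540} is misplaced. The degeneracy identified in \cite{2006.02540} concerns the map $p\mapsto p'$ (or $p\mapsto q'$) at \emph{fixed} $q$ and $w$, which is the problematic map for Grad-type estimates of terms like $\Gamma_{2}$. The full pre--post exchange $(p,q)\leftrightarrow(p',q')$ (with $w$ suitably tracked) is a unit-Jacobian involution in the reduced representation, encoded by $\frac{dp\,dq}{p^0q^0}=\frac{dp'\,dq'}{p'^0q'^0}$, which the paper itself cites from \cite{MR1105532} and uses in the $\Gamma_6$ estimate. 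So the lift is a convenience that makes the bookkeeping transparent, not a logical necessity; the paper's direct change of variables is also valid. This does not affect the correctness of your argument, only the justification you give for preferring the lifted form.
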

\begin{proof}
Note that $m(p)m(q)(1+\tau m(p'))(1+\tau m(q'))$ is invariant under the change of variables $(p,q) \leftrightarrow (p',q')$ from \eqref{equal}. Thus, by the change of variables $p \leftrightarrow q$ and $(p,q) \leftrightarrow (p',q')$ introduced in \cite{MR1379589}, we have the desired results.
\end{proof}

\begin{lemma}\label{null space} We have the following properties for the linear operator $L$: 
\begin{enumerate}
\item $L$ is a symmetric operator: $\langle Lf, g \rangle_{L^2_p}=\langle f, Lg \rangle_{L^2_p}$. 
\item $Lf=0$ if and only if $f=Pf$.
\end{enumerate}
where $Pf$ is defined as the orthonormal projection to the $L^2_p$ space which is spanned by following $5$-dimensional basis:
\begin{align*}
	\left\{\sqrt{m+\tau m^2},p_1\sqrt{m+\tau m^2},p_2\sqrt{m+\tau m^2},p_3\sqrt{m+\tau m^2},p^0\sqrt{m+\tau m^2}\right\}.
\end{align*}
\end{lemma}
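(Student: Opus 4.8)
The plan is to reduce both claims to the symmetrized bilinear representation of $L$ provided by Lemma~\ref{symmetric Lf}. For \emph{(1)}, I would start from the explicit formula \eqref{Lf} for $Lf$, write $\langle Lf,g\rangle_{L^2_p}=\int_{\mathbb{R}^3}dp~Lf(p)g(p)$, and apply Lemma~\ref{symmetric Lf} with the test function $\phi=g/\sqrt{m+\tau m^2}$ (first for smooth $g$, the general case following by density and by continuity of both sides in $g$). Averaging the four equal expressions produced there gives
\begin{align*}
\langle Lf,g\rangle_{L^2_p}=\frac14\int_{\mathbb{R}^3}dp\int_{\mathbb{R}^3}dq\int_{\mathbb{S}^2}dw~v_{\o}\sigma(g,\theta)\,m(p)m(q)(1+\tau m(p'))(1+\tau m(q'))\,\mathcal{D}f\,\mathcal{D}g,
\end{align*}
where $\mathcal{D}h:=\frac{h(p)}{\sqrt{m+\tau m^2(p)}}+\frac{h(q)}{\sqrt{m+\tau m^2(q)}}-\frac{h(p')}{\sqrt{m+\tau m^2(p')}}-\frac{h(q')}{\sqrt{m+\tau m^2(q')}}$. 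Since this expression is manifestly symmetric under $f\leftrightarrow g$, it yields $\langle Lf,g\rangle_{L^2_p}=\langle f,Lg\rangle_{L^2_p}$.

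For \emph{(2)}, the implication $f=Pf\Rightarrow Lf=0$ is a direct computation: if $f=Pf$, then $f/\sqrt{m+\tau m^2}$ is a linear combination of $1,p_1,p_2,p_3,p^0$, and the conservation laws $p+q=p'+q'$, $p^0+q^0=p'^0+q'^0$ make $\mathcal{D}f$ vanish identically on the collision manifold, so $Lf=0$ by \eqref{Lf}. For the converse, from $Lf=0$ we get $\langle Lf,f\rangle_{L^2_p}=0$; taking the second argument of the symmetrized formula equal to $f$, the resulting integral of $v_{\o}\sigma(g,\theta)\,m(p)m(q)(1+\tau m(p'))(1+\tau m(q'))\,(\mathcal{D}f)^2$ over $dp\,dq\,dw$ vanishes, and since the integrand is pointwise nonnegative it must vanish a.e. I would then verify that the weight $v_{\o}\sigma(g,\theta)\,m(p)m(q)(1+\tau m(p'))(1+\tau m(q'))$ is strictly positive for a.e.\ $(p,q,w)$: $v_{\o}=g\sqrt{s}/(p^0q^0)>0$ whenever $p\neq q$, $\sigma(g,\theta)=g\sin\theta>0$ for $\theta\in(0,\pi)$, $m>0$, and $1+\tau m>0$ (automatic for bosons and, for fermions, since $m=1/(e^{ap^0+c}+1)<1$), so the weight can vanish only on the null sets $\{p=q\}$ and $\{\sin\theta=0\}$. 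Hence $\mathcal{D}f=0$ a.e., that is, $\psi:=f/\sqrt{m+\tau m^2}$ obeys the collisional-invariance identity $\psi(p)+\psi(q)=\psi(p')+\psi(q')$ for a.e.\ collision.

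To finish, I would invoke the classical characterization of relativistic collisional invariants to conclude $\psi(p)=\alpha_0+\alpha\cdot p+\alpha_4 p^0$ for some constants $\alpha_0,\alpha_4\in\mathbb{R}$ and $\alpha\in\mathbb{R}^3$, so that $f=\psi\sqrt{m+\tau m^2}$ lies in the span of the five basis functions and therefore $f=Pf$. I expect this last step to be the main obstacle, namely passing from the a.e.\ functional equation for $\psi$ to its explicit affine form. This is a classical property of the (relativistic) Boltzmann collision operator, but making it rigorous requires either a mild regularity input on $f$ — available in the function classes used in this paper — or a measure-theoretic refinement, together with the fact that the center-of-momentum parametrization \eqref{com} makes the set of admissible collisions rich enough to determine $\psi$; I would cite the relativistic Boltzmann literature for the precise statement rather than reproduce its proof.
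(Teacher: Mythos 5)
Your proposal follows essentially the same route as the paper: both use Lemma~\ref{symmetric Lf} with the test function $\phi=g/\sqrt{m+\tau m^2}$ to produce the symmetrized quadratic form, read off symmetry directly, and then set $g=f$ to obtain nonnegativity and conclude $\mathcal{D}f=0$ from $Lf=0$. The only difference is that you spell out the final step — positivity of the weight a.e.\ and the classical characterization of collisional invariants as affine in $(1,p,p^0)$ — which the paper simply asserts in one line (``which is satisfied if and only if $f=Pf$''), so your version is actually the more careful one.
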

\begin{proof}
(1) We take an inner product of $g$ with $Lf$ of \eqref{Lf} to have  
\begin{align*}
\int_{\mathbb{R}^3}dp~ g Lf  &= \int_{\mathbb{R}^3}dp\frac{g(p)}{\sqrt{m+\tau m^2(p)}} \int_{\mathbb{R}^3}dq\int_{\mathbb{S}^2}dw~ v_{\o}\sigma(g,\theta)  m(p)m(q)(1+\tau m(p'))(1+\tau m(q')) 	\cr
& \times \bigg(\frac{f(p)}{\sqrt{m+\tau m^2(p)}}+\frac{f(q)}{\sqrt{m+\tau m^2(q)}}	-\frac{f(p')}{\sqrt{m+\tau m^2(p')}}
-\frac{f(q')}{\sqrt{m+\tau m^2(q')}}\bigg).
\end{align*}
By Lemma \ref{symmetric Lf}, we substitute
\[\phi(p)=\frac{1}{\sqrt{m+\tau m^2(p)}}g(p) \]
and obtain
\begin{align*}
\int_{\mathbb{R}^3}dp~ gLf  &= \frac{1}{4}\int_{\mathbb{R}^3}dp\int_{\mathbb{R}^3}dq\int_{\mathbb{S}^2}dw~ v_{\o}\sigma(g,\theta) m(p)m(q)(1+\tau m(p'))(1+\tau m(q')) 	\cr
&\times \left(\frac{f(p)}{\sqrt{m+\tau m^2(p)}}+\frac{f(q)}{\sqrt{m+\tau m^2(q)}}	-\frac{f(p')}{\sqrt{m+\tau m^2(p')}}-\frac{f(q')}{\sqrt{m+\tau m^2(q')}}\right) \cr
&\times \left(\frac{g(p)}{\sqrt{m+\tau m^2(p)}}+\frac{g(q)}{\sqrt{m+\tau m^2(q)}}	-\frac{g(p')}{\sqrt{m+\tau m^2(p')}}-\frac{g(q')}{\sqrt{m+\tau m^2(q')}}\right).
\end{align*}
This proves the symmetricity of $L$. \newline
(2) Once we substitute $g=f$ in the equation above, then we have 
\begin{align*}
	\int_{\mathbb{R}^3}dp~ fLf  &= \frac{1}{4}\int_{\mathbb{R}^3}dp\int_{\mathbb{R}^3}dq\int_{\mathbb{S}^2}dw~ v_{\o}\sigma(g,\theta) m(p)m(q)(1+\tau m(p'))(1+\tau m(q')) 	\cr
	&\times \bigg(\frac{f(p)}{\sqrt{m+\tau m^2(p)}}+\frac{f(q)}{\sqrt{m+\tau m^2(q)}}	-\frac{f(p')}{\sqrt{m+\tau m^2(p')}}
	-\frac{f(q')}{\sqrt{m+\tau m^2(q')}}\bigg)^2 \cr
	& \geq 0 .
\end{align*}
Therefore $Lf =0$ implies 
\begin{align*}
	\frac{f(p)}{\sqrt{m+\tau m^2(p)}}+\frac{f(q)}{\sqrt{m+\tau m^2(q)}}=	\frac{f(p')}{\sqrt{m+\tau m^2(p')}}
	+\frac{f(q')}{\sqrt{m+\tau m^2(q')}},
\end{align*}
which is satisfied if and only if $f=Pf$. 
Moreover, $L(Pf)=0$ gives the desired results.
\end{proof}

\section{Estimates of the linear and the nonlinear terms}\label{sec:nonlinear}
This section is devoted to proving several estimates of the linear and the nonlinear terms. Especially, we emphasize that the estimates of the nonlinear term $\Gamma_2$ involve the main difficulties as we mentioned in Section \ref{sec:novelty}. Before we move onto it, we present some useful properties that are commonly used throughout this paper.
\begin{lemma}\label{mJ} There exist positive constants $C_1>0$ and $C_2>0$ such that
\begin{align*}
	C_1J(p^0)\leq m(p) \leq C_2 J(p^0).
\end{align*}
\end{lemma}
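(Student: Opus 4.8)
The plan is to prove the two-sided bound by computing the ratio $m(p)/J(p^0)$ explicitly and showing it is pinched between two positive constants uniformly in $p$. Multiplying the numerator and denominator of $m(p)=1/(e^{ap^0+c}-\tau)$ by $e^{-ap^0-c}$ gives
\[
m(p)=\frac{e^{-ap^0-c}}{1-\tau e^{-ap^0-c}},\qquad\text{hence}\qquad \frac{m(p)}{J(p^0)}=\frac{e^{-c}}{1-\tau e^{-ap^0-c}}.
\]
So everything reduces to bounding the single scalar quantity $t:=e^{-ap^0-c}$ and the denominator $1-\tau t$ from above and below by positive constants, uniformly over admissible $p$.

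The key observation is that $p^0=\sqrt{1+|p|^2}\in[1,\infty)$ and $a>0$, so $t=e^{-ap^0-c}$ ranges over the bounded interval $(0,e^{-a-c}]$. For fermions ($\tau=-1$) the denominator is $1+t\in(1,\,1+e^{-a-c}]$, which is trivially bounded above and bounded below away from $0$; therefore $m(p)/J(p^0)\in\big[e^{-c}/(1+e^{-a-c}),\,e^{-c}\big)$, and one may take $C_1=e^{-c}/(1+e^{-a-c})$ and $C_2=e^{-c}$. For bosons ($\tau=+1$) the denominator is $1-t$, and here we invoke the standing assumption from Section~\ref{sec:globaleq} that $c>-a$: since $p^0\ge 1$ this forces $ap^0+c\ge a+c>0$, so $t\le e^{-a-c}<1$ and $1-t\in[1-e^{-a-c},\,1)$, again bounded above and bounded below away from $0$; thus $m(p)/J(p^0)\in\big(e^{-c},\,e^{-c}/(1-e^{-a-c})\big]$, and one may take $C_1=e^{-c}$ and $C_2=e^{-c}/(1-e^{-a-c})$.

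There is no genuine obstacle here; the lemma is essentially a one-line estimate once the ratio is written out. The only point requiring care is the boson case, where both the positivity of the denominator $1-t$ and, implicitly, the non-degeneracy of $m$ (no blow-up of the Bose--Einstein distribution) rest on the inequality $c>-a$, which is precisely why that restriction was imposed when fixing the equilibrium. The usefulness of the statement is that it lets one transfer the Gaussian-type decay and integrability properties of the non-quantum relativistic Maxwellian $J(p^0)=e^{-ap^0}$ — for which many estimates are already available in the relativistic Boltzmann literature — directly to the quantum equilibrium $m(p)$ and to the weight $\sqrt{m+\tau m^2}$ appearing in the linearization.
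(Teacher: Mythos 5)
Your proof is correct and is essentially the same argument as the paper's: both reduce the lemma to elementary bounds on the denominator of $m(p)$ using $p^0\ge 1$, $a>0$, and (for bosons) $c>-a$; your constants are algebraically equivalent to or slightly sharper than the paper's (e.g.\ your fermion $C_1=1/(e^c+e^{-a})$ versus the paper's $1/(e^c+1)$). Writing the ratio $m(p)/J(p^0)=e^{-c}/(1-\tau e^{-ap^0-c})$ explicitly is a clean way to organize the same computation.
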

\begin{proof} 
In the fermionic case, $C_1= 1/(1+e^c)$ and $C_2=e^{-c}$ give the desired estimates:
\begin{align*}
\frac{1}{e^c+1}\frac{1}{e^{ap^0}} \leq \frac{1}{e^{ap^0+c}+1} \leq \frac{1}{e^{ap^0+c}}.
\end{align*}
In the case of bosons, we choose $C_1=e^{-c}$ and $C_2=1/(e^c-e^{-a})$. Since $c>-a$, we have $C_2<\infty$, and we get$$
\frac{1}{e^{ap^0+c}} \leq \frac{1}{e^{ap^0+c}-1} \leq \frac{1}{e^c-e^{-a}}\frac{1}{e^{ap^0}} .
$$
\end{proof}
By this Lemma, the relativistic quantum equilibrium $m(p)$ can be treated as the non-quantum relativistic equilibrium $J(p^0)$. Now we present the estimates for collision frequency $\nu(p)$ and operator $K_1$ and $K_2$. 
\begin{lemma} There exists a positive constant $C>0$ such that 
\begin{align*}
\frac{1}{C}(p^0)^{\frac{1}{2}} \leq \nu(p) \leq C(p^0)^{\frac{1}{2}}.
\end{align*}
The integral operator $K_i(f)$ is a compact operator in $L^2_p$ for $i=1,2$.
\end{lemma}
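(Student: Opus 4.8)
The plan is to reduce the two assertions to, respectively, an elementary integral estimate and the classical Grad-type compactness argument, in both cases using Lemma~\ref{mJ} (and the energy conservation $p'^0+q'^0=p^0+q^0$) to replace the quantum weights by the non-quantum equilibrium $J$.

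\emph{Bounds on $\nu(p)$.} First, the scalar prefactor $1/(1+\tau m(p))$ is bounded above and below by positive constants uniformly in $p$: for fermions $1/(1-m(p))=1+e^{-(ap^0+c)}\in[1,1+e^{-(a+c)}]$ since $p^0\ge1$, and for bosons $1/(1+m(p))\in[(1+\sup_p m)^{-1},1]$ with $\sup_p m<\infty$ because $c>-a$. Likewise $(1+\tau m(p'))(1+\tau m(q'))$ stays between two positive constants since $p'^0,q'^0\ge1$. Hence, by Lemma~\ref{mJ}, $\nu(p)$ is comparable up to constants to $\int_{\mathbb{R}^3}dq\int_{\mathbb{S}^2}dw\,v_{\o}\sigma(g,\theta)J(q^0)$; since $\sigma=g\sin\theta$ and neither $g$ nor $s$ depends on $w$, the $w$-integral is an explicit constant and it remains to bound $\frac1{p^0}\int_{\mathbb{R}^3}\frac{dq}{q^0}\,g^2\sqrt s\,e^{-aq^0}$ above and below by multiples of $(p^0)^{1/2}$. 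For the upper bound I use $g^2=2(p^0q^0-p\cdot q-1)\le 4p^0q^0$ and $\sqrt s=\sqrt{g^2+4}\le C\sqrt{p^0q^0}$, which gives $\le C(p^0)^{1/2}\int_{\mathbb{R}^3}(q^0)^{1/2}e^{-aq^0}\,dq\le C(p^0)^{1/2}$. For the lower bound I restrict the $q$-integral to the set $\Omega_p$ of $q$ with $\tfrac12\le|q|\le1$ lying in a cone of fixed opening about $-p/|p|$: there $q^0\le\sqrt2$, $e^{-aq^0}\ge c_0$, $|\Omega_p|\ge c_0>0$ independently of $p$, and $g^2=2(p^0q^0-p\cdot q-1)\ge c_0 p^0$ once $|p|$ is large (because $-p\cdot q\ge c_0|p|$ there), so $g^2\sqrt s\ge c_0(p^0)^{3/2}$ on $\Omega_p$ and the integral is $\ge c_0(p^0)^{1/2}$; for $|p|\le R$ the bound $\nu(p)\ge c(p^0)^{1/2}$ follows from the continuity and strict positivity of $\nu$ together with $(p^0)^{1/2}\le C_R$. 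The one nontrivial point is the choice of $\Omega_p$ making $g$ genuinely comparable to $p^0$; this is the relativistic hard-ball estimate of Glassey--Strauss \cite{MR1105532} and Dudy\'{n}ski--Ekiel-Je\.{z}ewska \cite{MR933458}.

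\emph{Compactness of $K_1$ and $K_2$.} The operator $K_1$ is already displayed as an integral operator with kernel $k_1(p,q)=\int_{\mathbb{S}^2}dw\,v_{\o}\sigma(g,\theta)\sqrt{(m+\tau m^2)(p')}\sqrt{(m+\tau m^2)(q')}$; by Lemma~\ref{mJ} one has $(m+\tau m^2)(r)\le C\,e^{-ar^0}$, and since $p'^0+q'^0=p^0+q^0$ the product of the two quantum weights equals $C\,e^{-a(p^0+q^0)/2}$, independent of $w$, while the $w$-integral of $v_{\o}\sigma(g,\theta)=g^2\sqrt s\sin\theta/(p^0q^0)$ is a constant multiple of $g^2\sqrt s/(p^0q^0)\le C(p^0q^0)^{1/2}$. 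Hence $0\le k_1(p,q)\le C(p^0q^0)^{1/2}e^{-a(p^0+q^0)/2}\le C e^{-a(p^0+q^0)/4}$, so $\iint_{\mathbb{R}^3\times\mathbb{R}^3}|k_1(p,q)|^2\,dp\,dq<\infty$ and $K_1$ is Hilbert--Schmidt, hence compact. For $K_2$, whose integrand carries $f(p')$ rather than $f(q)$, one must first turn it into an honest integral operator $K_2f(p)=\int_{\mathbb{R}^3}k_2(p,p')f(p')\,dp'$ via the relativistic analogue of the Carleman change of variables $(q,w)\mapsto p'$: lift $dq$ to a four-vector integral with the on-shell Dirac masses and apply the Lorentz transform sending $p'_\mu-p_\mu$ to $(0,0,0,\bar g)$, exactly as for the linear $K_2$ in de Groot et al.\ \cite{MR635279}, Glassey--Strauss \cite{MR1211782} and Strain \cite{MR2728733}. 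This produces an explicit kernel $k_2$ in which the factor $1/\bar g$ of Section~\ref{sec:singbarg} appears as an integrable diagonal singularity $\sim|p-p'|^{-1}$ and which, after bounding the $m$-weights by $J$, carries the exponential factor $\exp\!\bigl(-c\sqrt{(p^0+p'^0)^2/4-|p\times p'|^2/\bar g^2}\bigr)$ of Section~\ref{sec:novelty}, so that $k_2$ decays like $e^{-c|p-p'|}$ and the $e^{ap^0/2}$ growth from the remaining $J$-factor is absorbed as explained there. Since $|p-p'|^{-1}e^{-c|p-p'|}$ is not square-integrable on $\mathbb{R}^3\times\mathbb{R}^3$, one argues by truncation rather than directly: for $N\in\mathbb{N}$ the kernel $k_2\,\mathbf{1}_{\{1/N\le|p-p'|\le N,\ |p|\le N\}}$ is bounded with compact support, so the corresponding operator $K_2^{(N)}$ is Hilbert--Schmidt and compact, while the kernel bound gives $\sup_p\!\int|k_2-k_2^{(N)}|\,dp'+\sup_{p'}\!\int|k_2-k_2^{(N)}|\,dp\to0$; Schur's test then yields $\|K_2-K_2^{(N)}\|_{L^2_p\to L^2_p}\to0$ and $K_2$ is a norm limit of compact operators. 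I expect the work to concentrate here: the Jacobian of $p\mapsto p'$ has no uniform positive lower bound in the center-of-momentum frame, so the change of variables must go through the four-vector lift, and establishing the resulting kernel bound (integrable singularity, exponential decay outweighing the exponential growth) is the linear prototype of the nonlinear $\Gamma_2$ and $T$ estimates of the rest of this section.
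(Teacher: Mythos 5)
The paper's own proof of this lemma is very short: it uses \eqref{1+m} and \eqref{m-m^2} to bound the quantum weights $1+\tau m$ and $\sqrt{m+\tau m^2}$ by constants and by $J$ respectively, observes that the resulting upper bounds for $\nu$, $K_1$, $K_2$ are identical to those of the non-quantum relativistic linearized operator, and then simply cites Dudy\'{n}ski--Ekiel-Je\.{z}ewska \cite{MR933458} for the two-sided bound on $\nu$ and the compactness of $K_1$, $K_2$. Your proposal is therefore a genuinely different route: it reconstructs those classical estimates from scratch rather than reducing to the reference. Your direct argument for $\nu$ (constants for the weights, the explicit two-sided bound on $g^2\sqrt{s}/(p^0q^0)$ using a cone $\Omega_p$ about $-p/|p|$ for the lower bound, continuity for small $|p|$) is correct and is the type of computation the cited paper carries out, so this buys self-containedness at the cost of length. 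Your Hilbert--Schmidt argument for $K_1$ is also correct: the energy conservation turns the post-collisional quantum weights into $e^{-a(p^0+q^0)/2}$, the $w$-integral of $v_{\o}\sigma$ is $\lesssim(p^0q^0)^{1/2}$, and the resulting kernel is clearly square-integrable.

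The $K_2$ part, however, contains a concrete misstatement of the picture. You assert that after the four-vector lift the kernel inherits ``the factor $1/\bar g$ of Section~\ref{sec:singbarg} \ldots as an integrable diagonal singularity $\sim|p-p'|^{-1}$.'' But Section~\ref{sec:singbarg} shows precisely the opposite: under the hard-potential hypothesis $\sigma(g,\theta)=g\sin\theta$ the half-angle identity $\sin(\theta/2)=\bar g/g$ gives $\sigma(g,\theta)=2\bar g\cos(\theta/2)$, so the $\bar g$ coming from $\delta(\bar q^{3}\bar g)=\bar g^{-1}\delta(\bar q^{3})$ cancels exactly, leaving a kernel that is bounded at the diagonal rather than mildly singular. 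Your truncation $\mathbf{1}_{\{1/N\le|p-p'|\}}$ and the accompanying Schur-limit argument are built on that nonexistent singularity. Moreover, once one has the bounded kernel, whether a straight Hilbert--Schmidt argument or a more careful Schur/truncation is needed depends on the precise polynomial weight $\bar{s}^{1/2}(p^0+p'^0)/(p^0p'^0)$ and on the true decay of $e^{-\sqrt{R^2-r^2}}$ (note that $R^2-r^2=|p-p'|^2(\bar g^2+4)/(4\bar g^2)$ is in fact large whenever $p$ and $p'$ are large and nearly parallel, giving more decay than the crude $e^{-|p-p'|/2}$), so the endgame needs to be redone. In short: the high-level plan for $K_2$ (Carleman-type four-vector lift, Lorentz transform, kernel bound) is the right prototype for the later $\Gamma_2$ and $T$ estimates, but the claimed singularity is wrong and so is the stated reason for resorting to truncation; the paper avoids all of this by simply citing \cite{MR933458}.
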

\begin{proof}
By Lemma \ref{mJ},
\begin{align}\label{1+m}
1+\tau m(p) = \frac{e^{ap^0+c}}{e^{ap^0+c}-\tau} = \frac{e^cm(p)}{J(p^0)} \leq C.
\end{align} 
Then by \eqref{1+m}, we have the upper bound of $\nu$ as 
\begin{align*}
\nu(p) &\leq C \int_{\mathbb{R}^3}dq\int_{\mathbb{S}^2}dw~ v_{\o}\sigma(g,\theta)m(q).
\end{align*}
Similarly we have
\begin{align}\label{m-m^2}
\sqrt{m+\tau m^2(p)}= \frac{e^{\frac{1}{2}(ap^0+c)}}{e^{ap^0+c}-\tau } \leq C e^{\frac{1}{2}ap^0} = CJ(p^0/2),
\end{align}
which yields 
\begin{align*}
K_1f(p)&\leq C\int_{\mathbb{R}^3}dq\int_{\mathbb{S}^2}dw~ v_{\o}\sigma(g,\theta) J(p'^0/2)J(q'^0/2)f(q), \text{and}\\
K_2f(p)&\leq C\int_{\mathbb{R}^3}dq\int_{\mathbb{S}^2}dw~ v_{\o}\sigma(g,\theta)J(q^0/2)J(q'^0/2) f(p').
\end{align*}
We note that the upper bounds of $\nu$ and $K_i$ are identical to those of non-quantum relativistic linear terms in \cite{MR933458}. Therefore we obtain the desired results.
\end{proof}
\begin{lemma}\label{coercivity} The linear operator $L$ satisfies following dissipation property for some positive constant $\delta$:
\[	\langle Lf, f\rangle_{L^2_{v}} \geq \delta \|(I-P)f\|_{\nu}^2.\]
\end{lemma}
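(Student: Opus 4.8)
\textbf{Proof proposal for Lemma \ref{coercivity}.}

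The plan is to follow the standard Newtonian and relativistic strategy of combining the explicit nonnegative quadratic form for $\langle Lf,f\rangle_{L^2_p}$ established in Lemma \ref{null space} with a compactness/contradiction argument. First I would record, directly from the proof of Lemma \ref{null space}(2), the identity
\begin{align*}
\langle Lf,f\rangle_{L^2_p} = \frac14\int_{\mathbb{R}^3}dp\int_{\mathbb{R}^3}dq\int_{\mathbb{S}^2}dw~ v_{\o}\sigma(g,\theta)\, m(p)m(q)(1+\tau m(p'))(1+\tau m(q'))\, \big(\mathcal{D}f\big)^2,
\end{align*}
where $\mathcal{D}f$ abbreviates the symmetric difference
\[
\mathcal{D}f = \frac{f(p)}{\sqrt{m+\tau m^2}(p)}+\frac{f(q)}{\sqrt{m+\tau m^2}(q)}-\frac{f(p')}{\sqrt{m+\tau m^2}(p')}-\frac{f(q')}{\sqrt{m+\tau m^2}(q')}.
\]
Since $P$ is the orthogonal projection onto $\ker L$ and $L(Pf)=0$, it suffices to prove the estimate for $f$ with $Pf=0$, i.e. $f=(I-P)f$; and since everything is quadratic it is enough to prove $\langle Lf,f\rangle_{L^2_p}\ge \delta\|f\|_\nu^2$ on the subspace $(\ker L)^\perp$. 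Because $Lf=\nu f+K_1f-K_2f$ with $K_1,K_2$ compact on $L^2_p$ (from the previous lemma) and $\nu(p)\sim (p^0)^{1/2}$, the operator $K:=K_1-K_2$ is a compact perturbation and $L$ is a nonnegative self-adjoint operator on $L^2_p$ that is bounded below relative to $\nu$; the issue is only to upgrade the known $\langle Lf,f\rangle\ge 0$ to a quantitative spectral gap on $(\ker L)^\perp$.

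The core argument I would run is the Weyl-type contradiction argument. Suppose the claim fails; then there is a sequence $f_n$ with $Pf_n=0$, $\|f_n\|_\nu=1$, and $\langle Lf_n,f_n\rangle_{L^2_p}\to 0$. Writing $\langle Lf_n,f_n\rangle = \|f_n\|_\nu^2 + \langle Kf_n,f_n\rangle = 1 + \langle Kf_n,f_n\rangle$, passing to a subsequence we may assume $f_n\rightharpoonup f$ weakly in the Hilbert space with norm $\|\cdot\|_\nu$; compactness of $K_1,K_2$ (which, after multiplying by $\nu^{-1/2}$, are compact on the weighted space as in \cite{MR933458}) gives $Kf_n\to Kf$ strongly, so $\langle Kf_n,f_n\rangle\to \langle Kf,f\rangle$, forcing $\langle Kf,f\rangle=-1$ and hence $\langle L f,f\rangle \le \liminf \langle Lf_n,f_n\rangle=0$ by weak lower semicontinuity of the nonnegative quadratic form $\langle L\cdot,\cdot\rangle$. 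By Lemma \ref{null space}(2) this means $f=Pf$; but $P$ is continuous for the weak topology on the relevant subspace (it has finite-dimensional range spanned by the five functions in \eqref{basis1}, each of which lies in $L^2_\nu$ since $J$ decays exponentially and $\nu$ grows polynomially), and $Pf_n=0$ for all $n$ forces $Pf=0$, hence $f=0$. Then $\langle Kf,f\rangle=0\ne -1$, a contradiction. Therefore the desired $\delta>0$ exists.

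The step I expect to be the main obstacle is making the compactness input fully rigorous: one needs that $f\mapsto \nu^{-1/2}K_i(\nu^{-1/2}\cdot)$ is a compact operator on $L^2_p$, so that $\langle K f_n,f_n\rangle\to\langle Kf,f\rangle$ along the weakly convergent subsequence, and one needs the weak limit $f$ to still satisfy the constraint $Pf=0$ and to land in the domain where $\langle Lf,f\rangle\ge 0$ applies. The compactness of $K_1,K_2$ is asserted in the previous lemma by comparison with the non-quantum relativistic kernels of \cite{MR933458} (using Lemma \ref{mJ} to bound $m$ and $\sqrt{m+\tau m^2}$ by exponentially decaying Maxwellians $J$), so I would invoke that; the kernel estimates there show the operators have Hilbert--Schmidt-type bounds after the $\nu$-weighting, which is more than enough for compactness. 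Once compactness is granted, the weak lower semicontinuity of $\langle L\cdot,\cdot\rangle$ follows because it equals the manifestly nonnegative triple-integral quadratic form above (a convex, lsc functional of $f$ in the $L^2_\nu$ topology), and the rest is routine. Finally, since $\langle L(Pf),Pf\rangle=0$ and $L$ is symmetric with $\ker L=\mathrm{ran}\,P$, we have $\langle Lf,f\rangle=\langle L(I-P)f,(I-P)f\rangle\ge \delta\|(I-P)f\|_\nu^2$ for all $f$, which is exactly the stated dissipation estimate.
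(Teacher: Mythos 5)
Your proposal is correct and follows essentially the same route as the paper: the paper's proof consists solely of citing Lemma 3.4 of \cite{MR2728733}, which is precisely the compactness/contradiction (Weyl-type) argument you spell out, using the decomposition $L=\nu+K_1-K_2$ with $K_1,K_2$ compact and the nonnegativity of the quadratic form from Lemma~\ref{null space}. Your write-up fills in the details the paper delegates to Strain's argument, and the bookkeeping between the $L^2_p$ and $L^2_\nu$ topologies (using $\nu\gtrsim 1$ and the finite rank of $P$) is handled correctly.
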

\begin{proof} Since $K_1$ and $K_2$ are compact and $\nu$ satisfies Lemma \ref{mJ}, we have the desired results by following the same proof of Lemma 3.4 of \cite{MR2728733}.
\end{proof}

\subsection{Estimates of the second-order nonlinear terms}
In this subsection, we establish the estimates on the second-order nonlinear terms.
\begin{lemma}\label{nonlin ff} 
We have
\begin{align*}
\big|\langle \Gamma(f,h) ,\eta \rangle_{L^2_p} \big| \leq  C \left(\|f\|_{L^2_p}\|h\|_{\nu}+\|f\|_{\nu}\|h\|_{L^2_p}\right)\|\eta\|_{\nu}.
\end{align*}
\end{lemma}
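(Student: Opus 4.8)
The plan is to split $\Gamma=\sum_{i=1}^{6}\Gamma_i$ as in \eqref{Gamma} and bound each $\Gamma_i$ separately. First I would record two structural facts used in every term: by Lemma \ref{mJ} and \eqref{1+m} all the Maxwellian coefficients in \eqref{Gamma} are uniformly bounded, and several simplify (e.g.\ $m(q')(1+\tau m(q))-m(q)(1+\tau m(q'))=m(q')-m(q)$); and by the energy conservation $p^0+q^0=p'^0+q'^0$ together with \eqref{m/m-m^2}, \eqref{1-m/m-m^2}, \eqref{m-m^2}, the growing weight $1/\sqrt{(m+\tau m^2)(p)}=e^{(ap^0+c)/2}$ can be traded against the decaying weights $\sqrt{(m+\tau m^2)(\cdot)}$ at the other momenta, producing a Gaussian factor $J(\cdot/2)$ on one collision variable. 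After also using $v_{\o}=g\sqrt{s}/(p^0q^0)$, $\sigma(g,\theta)=g\sin\theta$ and the crude bound $\int_{\mathbb S^2}v_{\o}\sigma(g,\theta)\,dw\le C(p^0)^{1/2}(q^0)^{1/2}$, each $\Gamma_i$ takes the shape of a weighted double collision integral of $|f|$ and $|h|$ (evaluated at two of $p,q,p',q'$) against $|\eta|$.

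The terms $\Gamma_1$ (carrying $f(p)h(q)$) and $\Gamma_6$ (carrying $f(p')h(q')$) are the \emph{easy} ones, because no change of variables linking a pre- and a post-collisional momentum is needed. For $\Gamma_1$ the weight $1/\sqrt{(m+\tau m^2)(p)}$ cancels, $f(p)$ factors out of the $q,w$ integral, and a Cauchy--Schwarz in $(q,w)$ against $v_{\o}\sigma(g,\theta)$ using $\sqrt{(m+\tau m^2)(q)}\le CJ(q^0/2)$ yields $|\Gamma_1(f,h)(p)|\le C(p^0)^{1/2}|f(p)|\,\|h\|_{L^2_p}$, hence $|\langle\Gamma_1(f,h),\eta\rangle_{L^2_p}|\le C\|f\|_{\nu}\|h\|_{L^2_p}\|\eta\|_{\nu}$ since $\nu(p)\sim(p^0)^{1/2}$. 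For $\Gamma_6$ one first removes the growing weight on $\eta(p)$ by energy conservation as above, and then applies the collisional change of variables $(p,q)\leftrightarrow(p',q')$ (the \emph{benign} one, which is exactly the substitution behind Lemma \ref{symmetric Lf} and does not involve the pathological map $p\mapsto p'$), reducing $\Gamma_6$ to the $\Gamma_1$-type estimate. The remaining four terms $\Gamma_2$ ($f(p)h(p')$), $\Gamma_3$ ($f(q)h(p')$), $\Gamma_4$ ($f(p)h(q')$) and $\Gamma_5$ ($f(q)h(q')$) couple a pre- and a post-collisional momentum; by the symmetries $p\leftrightarrow q$ and $(p,q)\leftrightarrow(p',q')$ of Lemma \ref{symmetric Lf} they all reduce to a single representative, namely $\Gamma_{2,1}$, whose associated trilinear form is bounded by \eqref{gamma21o}.

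For $\Gamma_{2,1}$ the naive change of variables $p\mapsto p'$ inside the $q$ integral is unavailable: its Jacobian has no positive uniform lower bound \cite{2006.02540}. Instead I would run the three-step reduction of Section \ref{sec:novelty} on the inner integral $B$ of \eqref{rela B}. (i) Lift $dq/q^0\,dq'/q'^0$ to the energy--momentum $4$-vector integrals $dq^\mu\,dq'^\mu$ with mass-shell deltas and unit steps, resolve the nonlinear dependence of $s,g,\cos\theta$ by the symmetric substitution $\bar q^\mu=q^\mu+q'^\mu$, $\bar q'^\mu=q^\mu-q'^\mu$ (Lemma \ref{g comp}), and remove the $4$-delta by $\bar q'^\mu=p'^\mu-p^\mu$, arriving at \eqref{B2}. (ii) Apply the explicit Lorentz transform of \cite{MR2728733} with $\Lambda(p'_\mu-p_\mu)=(0,0,0,\bar g)$, so that $\delta(\bar q^\mu(p'_\mu-p_\mu))=\bar g^{-1}\delta(\bar q^3)$; the spurious $1/\bar g$ is cancelled exactly by the half-angle identity $\sigma(g,\theta)=g\sin\theta=2g\sin(\theta/2)\cos(\theta/2)=2\bar g\cos(\theta/2)$ coming from \eqref{cosa}. (iii) Absorb the exponential growth in $p^0$ carried by $J\big((p'^0-p^0)/2\big)$ against the decaying factor $\exp\big(-\sqrt{(p^0+p'^0)^2/4-|p\times p'|^2/\bar g^2}\big)$ obtained from the last mass-shell delta, using \cite[Lemma~3.1]{MR1211782} to get $J\big((p'^0-p^0)/2\big)\exp(-\sqrt{\cdots})\le e^{-|p-p'|/2}\le 1$. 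The outcome is a kernel estimate $|\langle\Gamma_{2,1}(f,h),\eta\rangle_{L^2_p}|\le C\int_{\mathbb R^3}\!\int_{\mathbb R^3}\mathbf k(p,p')\,|f(p)|\,|h(p')|\,|\eta(p)|\,dp\,dp'$, where $\mathbf k(p,p')$ has at worst the $L^2_{\mathrm{loc}}$ singularity $\sim|p-p'|^{-1}$ dressed with exponential decay, so that $\int\mathbf k(p,p')\,dp'\le C\nu(p)$, $\int\mathbf k(p,p')\,dp\le C\nu(p')$ and $\big(\int\mathbf k(p,p')^{2}\,dp'\big)^{1/2}\le C\nu(p)^{1/2}$. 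Pulling $|f(p)|$ out and using Cauchy--Schwarz then gives $|\langle\Gamma_{2,1}(f,h),\eta\rangle_{L^2_p}|\le C\|f\|_{L^2_p}\,\|h\|_{\nu}\,\|\eta\|_{\nu}$, and the symmetric argument with the roles of $f$ and $h$ swapped supplies the companion term $\|f\|_{\nu}\|h\|_{L^2_p}\|\eta\|_{\nu}$. Summing the six bounds proves the lemma.

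The main obstacle is precisely step (iii) for $\Gamma_{2,1}$ (equivalently $\Gamma_2$--$\Gamma_5$): since the collisional change of variables cannot be used here, one must instead carry out the $4$-vector lift, the Lorentz reduction of $\delta(\bar q^\mu(p'_\mu-p_\mu))$ with the $1/\bar g$ cancellation, and the absorption of the $\exp(p^0)$ growth, and then verify that the resulting kernel $\mathbf k$ satisfies the Schur-type and square-integrability bounds above; once $\mathbf k$ is understood, the concluding Cauchy--Schwarz and the treatment of $\Gamma_1,\Gamma_6$ are routine.
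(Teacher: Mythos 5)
Your overall strategy matches the paper's: reduce the hard terms to a double momentum integral by lifting $dq\,dq'$ to $4$-vector integrals with mass-shell delta functions, pass to the symmetric variables $\bar q^\mu,\bar q'^\mu$, apply the explicit Lorentz transform with $\Lambda(p'_\mu-p_\mu)=(0,0,0,\bar g)$, cancel the $1/\bar g$ against the half-angle identity $\sigma(g,\theta)=2\bar g\cos(\theta/2)$, and absorb the $\exp((p^0-p'^0)/2)$ growth using $R^2-r^2=|p-p'|^2(\bar g^2+4)/(4\bar g^2)\ge\max\{\bar g^2/4+1,|p-p'|^2/4\}$. That part of your proposal is essentially the paper's argument for $\Gamma_{2,1}$, and it is sound.

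However, there is a genuine error in your reduction bookkeeping. You assert that $\Gamma_2,\Gamma_3,\Gamma_4,\Gamma_5$ ``all reduce to a single representative, namely $\Gamma_{2,1}$, by the symmetries $p\leftrightarrow q$ and $(p,q)\leftrightarrow(p',q')$ of Lemma~\ref{symmetric Lf}.'' This is false. In the pairing $\langle \Gamma_i(f,h),\eta\rangle_{L^2_p}$ the test function $\eta(p)$ is anchored to the outer integration variable; the substitution $p\leftrightarrow q$ carries $\eta(p)\mapsto\eta(q)$, and $(p,q)\leftrightarrow(p',q')$ carries $\eta(p)\mapsto\eta(p')$. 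So starting from $\Gamma_3$ (integrand $\sim f(q)h(p')\eta(p)$), no such swap produces the $\Gamma_{2,1}$ structure $f(p)h(p')\eta(p)$, where $f$ and $\eta$ share the argument $p$. The only genuine symmetry is $p'\leftrightarrow q'$ (which fixes $p$, hence fixes $\eta(p)$), giving $\Gamma_2=\Gamma_4$ and $\Gamma_3=\Gamma_5$. The two pairs are structurally different, and the paper in fact treats them by different means: $\Gamma_3$ (hence $\Gamma_5$) is handled by a direct H\"older split $II_1\times II_2$ combined with the benign pre-post change of variables $(p,q)\leftrightarrow(p',q')$ applied to the $|h(p')|^2$ integral alone — no $4$-vector lift, Lorentz transform, or Bessel-function formula is needed there. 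The heavy machinery is reserved solely for $\Gamma_2$ (hence $\Gamma_4$), precisely because $f(p)$ and $\eta(p)$ coincide in argument while $h$ sits at $p'$, so a direct H\"older split between pre- and post-collisional momenta is unavailable. Your plan would work once you replace the claimed symmetry reduction by the paper's separate treatment of $\Gamma_3$.

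A smaller imprecision: you describe the resulting kernel as having ``at worst an $L^2_{\mathrm{loc}}$ singularity $\sim|p-p'|^{-1}$.'' In fact there is no singularity at all after the reduction: since $R^2-r^2\ge\bar s/4\ge1$, the Bessel integral formulas produce factors like $1/\sqrt{R^2-r^2}$ and $R/(R^2-r^2)^{3/2}$ that are \emph{bounded}, and the final kernel bound is $B\le C\bar s^{1/2}(p^0+p'^0)$, which combined with $\bar s\le 4p^0p'^0$ and the leftover Gaussian weight gives $\mathbf k(p,p')\lesssim(p^0)^{1/2}J(p'^0/4)$. This does not break your Schur/Cauchy--Schwarz conclusion, but you should not suggest that a $|p-p'|^{-1}$ singularity persists.
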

\begin{proof}
As we can see in \eqref{Gamma}, there are six second-order nonlinear terms. By the change of variables $p' \leftrightarrow q'$, we can see that $\Gamma_2(f,h)=\Gamma_4(f,h)$ and $\Gamma_3(f,h)=\Gamma_5(f,h)$. Thus we write the proof in three parts. Firstly, we prove the estimates of $\Gamma_1(f,h)$ and $\Gamma_6(f,h)$, since they are similar to the non-quantum relativistic case. Secondly, we present the estimate of $\Gamma_3(f,h)$. Lastly, we prove the most difficult part $\Gamma_2(f,f)$. For this we need several techniques to deal with the difficulties that arise from the relativistic integrals.\newline
{\bf (1) Estimates of $\Gamma_1$ and $\Gamma_6$:} We first consider $\Gamma_1(f,h)$ term. By \eqref{1+m}, \eqref{m-m^2}, and Lemma \ref{mJ}, we have
\begin{align}\label{gamma1}
\big| \langle \Gamma_1(f,h) ,\eta \rangle_{L^2_p} \big|&\leq  C \int_{\mathbb{R}^3}dp\int_{\mathbb{R}^3}dq\int_{\mathbb{S}^2}dw ~ v_{\o}\sigma(g,\theta) J(q^0/2)|f(p)| |h(q)| |\eta(p)|.
\end{align}
By the H\"{o}lder inequality, we have  
\begin{align*}
\big| \langle \Gamma_1(f,h) ,\eta \rangle_{L^2_p} \big|& \leq C \int_{\mathbb{R}^3}dp\int_{\mathbb{S}^2}dw \left(\int_{\mathbb{R}^3}dq~v_{\o}^2\sigma^2(g,\theta)J(q^0)\right)^{\frac{1}{2}}\left(\int_{\mathbb{R}^3}dq|h(q)|^2\right)^{\frac{1}{2}} |f(p)| |\eta(p)|.
\end{align*}
For the first $dq$ integral, we recall the definition of $v_{\o}$ and $\sigma(g,\theta)$:
\begin{align*}
v_{\o}= \frac{g\sqrt{s}}{p^0q^0}, \qquad \sigma(g,\theta)=g\sin\theta.
\end{align*}
By an explicit computation, we have 
\begin{align*}
s
&=2+2p^0q^0-2p\cdot q 
~\leq~ 2p^0q^0+2(1-\cos\theta)|p||q| 
~\leq~ 4p^0q^0,
\end{align*}
and
\begin{align*}
g=\sqrt{s-4} \leq \sqrt{s} \leq 2\sqrt{p^0q^0}.
\end{align*}
Applying above inequalities, we have 
\begin{align*}
\int_{\mathbb{R}^3}dq~v_{\o}^2\sigma^2(g,\theta)J(q^0) \leq  Cp^0.
\end{align*}
We apply the H\"{o}lder inequality again to obtain the following estimate: 
\begin{align*}
\big| \langle \Gamma_1(f,h) ,\eta \rangle_{L^2_p} \big|& \leq C \|h\|_{L^2_p}\int_{\mathbb{R}^3}dp \int_{\mathbb{S}^2}dw~ (p^0)^{\frac{1}{2}} |f(p)| |\eta(p)|  \leq C\|f\|_{\nu} \|h\|_{L^2_p} \|\eta\|_{\nu}.
\end{align*}
Now we consider $\Gamma_6(f,h)$ term. We first use the energy conservation law $p^0+q^0=p'^0+q'^0$ to have
\begin{align*}
\sqrt{m+\tau m^2(p')}\sqrt{m+\tau m^2(q')}\leq  CJ(p'^0/2)J(q'^0/2) = CJ(p^0/2)J(q^0/2),
\end{align*} 
which gives 
\begin{align}\label{gamma6}
\big| \langle \Gamma_6(f,h) ,\eta \rangle_{L^2_p} \big|&\leq C \int_{\mathbb{R}^3}dp\int_{\mathbb{R}^3}dq\int_{\mathbb{S}^2}dw~ v_{\o}\sigma(g,\theta) J(q^0/2)|f(p')| |h(q')| |\eta(p)|.
\end{align}
Applying the H\"{o}lder inequality yields 
\begin{align*}
\big| \langle \Gamma_6(f,h) ,\eta \rangle_{L^2_p} \big|&\leq  C \int_{\mathbb{R}^3}dp\int_{\mathbb{S}^2}dw\left(\int_{\mathbb{R}^3}dq ~ v_{\o}\sigma^2(g,\theta) J(q^0)\right)^{\frac{1}{2}}\cr
&\quad \times \left(\int_{\mathbb{R}^3}dq ~ v_{\o}|f(p')|^2|h(q')|^2\right)^{\frac{1}{2}}|\eta(p)|.
\end{align*}
Similarly we use $s \leq 4p^0q^0 $ and $g \leq  2(p^0)^{\frac{1}{2}}(q^0)^{\frac{1}{2}}$ to have 
\begin{align*}
\big| \langle \Gamma_6(f,h) ,\eta \rangle_{L^2_p} \big|&\leq C \int_{\mathbb{R}^3}dp\int_{\mathbb{S}^2}dw~(p^0)^{\frac{1}{2}}\left(\int_{\mathbb{R}^3}dq~  v_{\o}|f(p')|^2|h(q')|^2\right)^{\frac{1}{2}}|\eta(p)|.
\end{align*}
By the H\"{o}lder inequality for the $dpdw$ integral, we have
\begin{align*}
\big| \langle \Gamma_6(f,h) ,\eta \rangle_{L^2_p} \big|	&\leq C\|\eta\|_{\nu} \left(\int_{\mathbb{R}^3}dp\int_{\mathbb{S}^2}dw \ (p^0)^{\frac{1}{2}}\int_{\mathbb{R}^3}dq v_{\o}|f(p')|^2|h(q')|^2\right)^{\frac{1}{2}}.
\end{align*}
The energy conservation law implies
\begin{align*}
(p^0)^{1/2}\leq (p'^0+q'^0)^{1/2} \leq (p'^0)^{1/2}+(q'^0)^{1/2},
\end{align*}
which yields 
\begin{align*}
\big| \langle \Gamma_6(f,h) ,\eta \rangle_{L^2_p} \big|	&\leq \|\eta\|_{\nu} \int_{\mathbb{S}^2}dw\int_{\mathbb{R}^3}dp \int_{\mathbb{R}^3}dq~ v_{\o}\left((p'^0)^{\frac{1}{2}}+(q'^0)^{\frac{1}{2}}\right) |f(p')|^2|h(q')|^2 .
\end{align*}
Then we consider the pre-post change of variables $(p,q) \leftrightarrow (p',q')$ with the Jacobian in \cite{MR1105532}:
\begin{align}\label{pq,p'q'}
\frac{dpdq}{p^0q^0} =\frac{dp'dq'}{p'^0q'^0},
\end{align}
and recall from \eqref{gsymme} that we already have
\begin{align*}
s(p^{\mu},q^{\mu}) = s(p'^{\mu},q'^{\mu}), \qquad g(p^{\mu},q^{\mu}) = g(p'^{\mu},q'^{\mu}).
\end{align*}
Thus we have 
\begin{multline*}
\int_{\mathbb{S}^2}dw\int_{\mathbb{R}^3}dp \int_{\mathbb{R}^3}dq~ v_{\o}\left((p'^0)^{\frac{1}{2}}+(q'^0)^{\frac{1}{2}}\right) |f(p')|^2|h(q')|^2 . \cr
= \int_{\mathbb{S}^2}dw\int_{\mathbb{R}^3}dp'\int_{\mathbb{R}^3}dq'~\frac{g(p'^{\mu},q'^{\mu})\sqrt{s(p'^{\mu},q'^{\mu})}}{p'^0q'^0}\left( (p'^0)^{\frac{1}{2}}+(q'^0)^{\frac{1}{2}}\right) |f(p')|^2|h(q')|^2.
\end{multline*}
Finally by $v_{\o}(p'^{\mu},q'^{\mu}) \leq C$ and the H\"{o}lder inequality, we obtain the desired results:
\begin{align*}
\big| \langle \Gamma_6(f,h) ,\eta \rangle_{L^2_p} \big|&\leq C \left(\|f\|_{L^2_p}\|h\|_{\nu}+\|f\|_{\nu}\|h\|_{L^2_p}\right)\|\eta\|_{\nu}.
\end{align*}
\newline
{\bf (2) Estimates of $\Gamma_3$:} 
We split the $\Gamma_3$ term into two parts: 
\begin{align*}
\Gamma_3(f,h) &= \frac{\tau }{\sqrt{m+\tau m^2(p)}}\int_{\mathbb{R}^3}dq\int_{\mathbb{S}^2}dw~ v_{\o}\sigma(g,\theta) \{m(q')(1+\tau m(p))-m(p)(1+\tau m(q'))\} \cr
& \quad \times \sqrt{m+\tau m^2(q)}f(q)\sqrt{m+\tau m^2(p')}h(p') \cr
&= \Gamma_{3,1}(f,h)+ \Gamma_{3,2}(f,h).
\end{align*}
By $1+\tau m(p)\leq C$ and $1+\tau m(q')\le C$, we obtain
\begin{align*}
|\Gamma_{3,1}(f,h)| &\leq C\int_{\mathbb{R}^3}dq\int_{\mathbb{S}^2}dw~ v_{\o}\sigma(g,\theta) \frac{J(q'^0)J(q^0/2)J(p'^0/2)}{J(p^0/2)}|f(q)| |h(p')| \cr
&\leq C\int_{\mathbb{R}^3}dq\int_{\mathbb{S}^2}dw~ v_{\o}\sigma(g,\theta) J(q^0)J(q'^0/2) |f(q)| |h(p')|,
\end{align*}
and
\begin{align*}
|\Gamma_{3,2}(f,h)| &\leq C\int_{\mathbb{R}^3}dq\int_{\mathbb{S}^2}dw~ v_{\o}\sigma(g,\theta) J(p^0/2)J(q^0/2)J(p'^0/2)|f(q)| |h(p')|,
\end{align*}
where we used $J(p'^0/2)J(q'^0/2)=J(p^0/2)J(q^0/2)$ for the exponential term. In the case of $\Gamma_{3,2}$, there are decays of two precollisional momentum variables $J(p^0/2)J(q^0/2)$. Thus we can have the exponential decay with respect to all the pre-post momentum variables by the energy conservation: 
\begin{align*} 
J(p^0/2)J(q^0/2)=J(p^0/4)J(q^0/4)J(p'^0/4)J(q'^0/4).
\end{align*}
Thus, the estimate of $\Gamma_{3,2}$ can be absorbed in that of $\Gamma_{3,1}$. Thus, without loss of generality, we only consider the estimates of $\Gamma_{3,1}$: 
\begin{align*}
	\big| \langle \Gamma_{3,1}(f,h) ,\eta \rangle_{L^2_p} \big| 
	\leq  C\int_{\mathbb{R}^3}dp\int_{\mathbb{R}^3}dq\int_{\mathbb{S}^2}dw~ v_{\o}\sigma(g,\theta) J(q^0)J(q'^0/2)|f(q)| |h(p')| |\eta(p)| .
\end{align*}
To separate pre-collisional variable and post-collisional variable, we apply the H\"{o}lder inequality as follows:
\begin{align*}
	\big| \langle \Gamma_{3,1}(f,h) ,\eta \rangle_{L^2_p} \big|  &\leq C \left(\int_{\mathbb{R}^3}dp\int_{\mathbb{R}^3}dq\int_{\mathbb{S}^2}dw~ v_{\o}\sigma(g,\theta) J(q^0)J(q'^0/2)|f(q)|^2|\eta(p)|^2 \right)^{\frac{1}{2}} \cr
	&\times \left(\int_{\mathbb{R}^3}dp\int_{\mathbb{R}^3}dq\int_{\mathbb{S}^2}dw~ v_{\o}\sigma(g,\theta) J(q^0)J(q'^0/2)|h(p')|^2 \right)^{\frac{1}{2}} \cr
	&= II_1 \times II_2.
\end{align*}
Using $s \leq 4p^0q^0 $ and $g \leq  2(p^0)^{\frac{1}{2}}(q^0)^{\frac{1}{2}}$, we have
\begin{align*}
	(II_1)^2 &\leq C \int_{\mathbb{R}^3}dp\int_{\mathbb{R}^3}dq\int_{\mathbb{S}^2}dw~ (p^0)^{\frac{1}{2}}(q^0)^{\frac{1}{2}} J(q^0)J(q'^0/2)|f(q)|^2|\eta(p)|^2 .
\end{align*}
Since $(q^0)^{\frac{1}{2}}J(q^0/2)$ and $J(q'^0/2)$ are bounded by constants, we obtain 
\begin{align*}
	(II_1)^2&\leq C\int_{\mathbb{R}^3}dq~ |f(q)|^2\int_{\mathbb{R}^3} dp~ (p^0)^{\frac{1}{2}}|\eta(p)|^2 \leq C\|f\|_{L^2_p}^2\|\eta\|_{\nu}^2.
\end{align*}
For $II_2$, we apply the pre-post momentum change of variables $(p,q) \leftrightarrow (p',q')$ similarly to the estimate of $\Gamma_6(f,h)$. Note that $v_{\o} = \frac{g\sqrt{s}}{p^0q^0}$ is invariant under the change of variables, and we obtain
\begin{align*}
	(II_2)^2&=\int_{\mathbb{R}^3}dp'\int_{\mathbb{R}^3}dq'\int_{\mathbb{S}^2}dw~ \frac{g(p'^{\mu},q'^{\mu})\sqrt{s(p'^{\mu},q'^{\mu})}}{p'^0q'^0}\sigma(g(p'^{\mu},q'^{\mu}),\theta) J(q^0)J(q'^0/2)|h(p')|^2.
\end{align*}
Similarly, we use $g(p'^{\mu},q'^{\mu}) \leq 2(p'^0)^{1/2}(q'^0)^{1/2}  $ and $s(p'^{\mu},q'^{\mu}) \leq 4p'^0q'^0$ to have
\begin{align*}
(II_2)^2&\leq\int_{\mathbb{R}^3}dp'\int_{\mathbb{R}^3}dq'\int_{\mathbb{S}^2}dw~ (p'^0)^{\frac{1}{2}}(q'^0)^{\frac{1}{2}} J(q^0)J(q'^0/2)|h(p')|^2 \cr
&\leq\int_{\mathbb{S}^2}dw \int_{\mathbb{R}^3}dq'~(q'^0)^{\frac{1}{2}} J(q'^0/2) \int_{\mathbb{R}^3}dp'~(p'^0)^{\frac{1}{2}}|h(p')|^2 \cr
&\leq C\|h\|_{\nu}^2.
\end{align*}
Combining the estimates of $II_1$ and $II_2$ yields 
\begin{align*}
	\big| \langle \Gamma_{3,1}(f,h) ,\eta \rangle_{L^2_p} \big| &\leq C\|f\|_{L^2_p}\|h\|_{\nu}\|\eta\|_{\nu}.
\end{align*}
\newline
{\bf (3) Estimates of $\Gamma_2$:}
We first separate $\Gamma_2$ by two terms: 
\begin{align*}
\Gamma_2(f,h) &= -\tau \int_{\mathbb{R}^3}dq\int_{\mathbb{S}^2}dw~ v_{\o}\sigma(g,\theta) \big[m(q)(1+\tau m(q'))-m(q')(1+\tau m(q))\big] \cr
& \quad  \times \sqrt{m+\tau m^2(p')} f(p)h(p') \cr
& =\Gamma_{2,1}(f,h)-\Gamma_{2,2}(f,h).
\end{align*}
Using $1+\tau m(p) \leq C$, we estimate each term as follows:  
\begin{align*}
\big|\Gamma_{2,1}(f,h)\big| &\leq C \int_{\mathbb{R}^3}dq\int_{\mathbb{S}^2}dw~ v_{\o}\sigma(g,\theta) J(q^0)J(p'^0/2) |f(p)||h(p')|,	
\end{align*}
and
\begin{align*}
\big|\Gamma_{2,2}(f,h)\big| &\leq C \int_{\mathbb{R}^3}dq\int_{\mathbb{S}^2}dw~ v_{\o}\sigma(g,\theta) J(q'^0)J(p'^0/2) |f(p)||h(p')|.
\end{align*}
Since $\Gamma_{2,2}$ has the exponential decays with respect to two post-collisional variables $p'$ and $q'$, we can have the decays for all pre- and post-collisional momentum variables by using the energy conservation:
\begin{align*} 
	J(p^0/2)J(q^0/2)=J(p^0/4)J(q^0/4)J(p'^0/4)J(q'^0/4).
\end{align*}
Thus the estimate of $\Gamma_{2,2}$ can be absorbed into that of $\Gamma_{2,1}$. Thus, we only consider $\Gamma_{2,1}$ term. However, different from the previous cases, it is difficult to separate $f,h$ and $\eta$ using the H\"{o}lder inequality. Thus, we proceed the estimate via lifting the $dw$ integral to $dp'dq'$ integral imposing a $4$-dimensional Dirac-delta function: 
\begin{multline}\label{gamma21}
\big| \langle \Gamma_{2,1}(f,h) ,\eta \rangle_{L^2_p} \big|
\leq C \int_{\mathbb{R}^3}\frac{dp}{p^0}\int_{\mathbb{R}^3}\frac{dq}{q^0}\int_{\mathbb{R}^3}\frac{dp'}{p'^0}\int_{\mathbb{R}^3}\frac{dq'}{q'^0} s\sigma(g,\theta)  \cr
\times  \delta^{(4)}(p^{\mu}+q^{\mu}-p'^{\mu}-q'^{\mu})J(q^0)J(p'^0/2)|f(p)| |h(p')| |\eta(p)|.
\end{multline}
Then we focus on the following estimate:
\begin{align}\label{B}
B&=\int_{\mathbb{R}^3}\frac{dq}{q^0}\int_{\mathbb{R}^3}\frac{dq'}{q'^0} s\sigma(g,\theta) \delta^{(4)}(p^{\mu}+q^{\mu}-p'^{\mu}-q'^{\mu})J(q^0).
\end{align}
We also lift the $dq$ and $dq'$ integrals to the relativistic energy-momentum $4$-vector integral $dq^{\mu}$ and $dq'^{\mu}$ imposing Dirac-delta function and unit step function:
\begin{multline}\label{B1}
B=\int_{\mathbb{R}^4}dq^{\mu}\int_{\mathbb{R}^4}dq'^{\mu} s\sigma(g,\theta) \delta^{(4)}(p^{\mu}+q^{\mu}-p'^{\mu}-q'^{\mu}) J(q^0) u(q^0)u(q'^0)\delta(q^{\mu}q_{\mu}+1)\delta(q'^{\mu}q'_{\mu}+1),
\end{multline}
where the unit step function $u(x)$ is defined by $1$ when $x\geq 0$ and $0$ when $x<0$. Note that $B$ has the integration over $dq^{\mu}$ and $dq'^{\mu}$, but $s$ and $g$ are functions depending on $p^{\mu}$ and $q^{\mu}$. 
Thus it is convenient to split the $q^{\mu}$ and the $q'^{\mu}$ parts from $g$ as in the lemma below.

Before we proceed furtherly, we first provide a preliminary lemma on the properties of $g,$ $\bar{g}$ and $\tilde{g}$.
\begin{lemma}[\cite{Jang-Yun-Lp, Jang2016, MR2728733}] \label{g comp} Define $g,$ $\bar{g}$ and $\tilde{g}$ as \eqref{gsymme}. Then we have 
\begin{align*}
&(1) ~	g^2=\bar{g}^2+\tilde{g}^2, \cr
&(2) ~	\bar{g}^2=-\frac{1}{2}(p^{\mu}+q'^{\mu})(q_{\mu}+p'_{\mu}-p_{\mu}-q'_{\mu}) , \cr
&(3) ~ \tilde{g}^2=-\frac{1}{2}(p^{\mu}+p'^{\mu})(q_{\mu}+q'_{\mu}-p_{\mu}-p'_{\mu}).
\end{align*}
\end{lemma}
\begin{proof} The proofs can be found in \cite{Jang-Yun-Lp, MR2728733}, but for the readers' convenience, we present the proof. \newline
(1) By definition of $g$ in \eqref{gsymme}, we have 
\begin{align*}
	-g^2+\bar{g}^2+\tilde{g}^2 
	&= -2+2p^{\mu}q_{\mu} -2p^{\mu}p'_{\mu}-2p^{\mu}q'_{\mu}.
\end{align*}
Since $p^{\mu}$ is energy momentum $4$-vector, we have $p^{\mu}p_{\mu}=-1$, which gives 
\begin{align*}
-g^2+\bar{g}^2+\tilde{g}^2&= 2p^{\mu}p_{\mu}+2p^{\mu}q_{\mu} -2p^{\mu}p'_{\mu}-2p^{\mu}q'_{\mu} \cr
	&=2p^{\mu}(p_{\mu}+q_{\mu} -p'_{\mu}-q'_{\mu}).
\end{align*}
Then the conservation law of energy momentum $4$-vector lead to the desired results. \newline
(2) 
We denote the right-hand side as $R$ and expand as follows: 
\begin{align*}
R=
-\frac{1}{2}(p^{\mu}q_{\mu}+p^{\mu}p'_{\mu}+q'^{\mu}q_{\mu}+q'^{\mu}p'_{\mu})+\frac{1}{2}(p^{\mu}p_{\mu}+2p^{\mu}q'_{\mu}+q'^{\mu}q'_{\mu}).
\end{align*}
We recall from \eqref{gsymme} that $p^{\mu}q_{\mu}=p'^{\mu}q'_{\mu}$ and  $p^{\mu}p'_{\mu}=q^{\mu}q'_{\mu}$ to have  
\begin{align*}
R=-\frac{1}{2}(2p^{\mu}q_{\mu}+2p^{\mu}p'_{\mu})+\frac{1}{2}(-2+2p^{\mu}q'_{\mu})
=-p^{\mu}q_{\mu}+p^{\mu}q'_{\mu}-p^{\mu}p'_{\mu}-1.
\end{align*}
Using $p^{\mu}p_{\mu}=-1$, we have
\begin{align*}
R
=p^{\mu}(-q_{\mu}+q'_{\mu}-p'_{\mu}+p_{\mu}).
\end{align*}
Then the conservation laws of energy momentum $4$-vector yields 
\begin{align*}
R=2p^{\mu}(-p'_{\mu}+p_{\mu}) = -2p^{\mu}p'_{\mu}-2 .
\end{align*}
By definition of $\bar{g}$, we derived desired results. \newline
(3) The proof can be obtained by the same way as that of (2). So we omit it.
\end{proof}

By Lemma \ref{g comp} (1) and (3) above, we use the following representation of $g$: 
\begin{align*}
	g^2=\bar{g}^2-\frac{1}{2}(p^{\mu}+p'^{\mu})(q_{\mu}+q'_{\mu}-p_{\mu}-p'_{\mu}).
\end{align*}
Now we go back to the estimates of $B$ and apply the following change of variables 
\begin{align}\label{changofv}
	\bar{q}^{\mu}=q^{\mu}+q'^{\mu}, \qquad \bar{q}'^{\mu}=q^{\mu}-q'^{\mu}.
\end{align}
Then the reverse relation can be written by 
\begin{align*}
	q^{\mu}=\frac{1}{2}(\bar{q}^{\mu}+\bar{q}'^{\mu}),\qquad  q'^{\mu}=\frac{1}{2}(\bar{q}^{\mu}-\bar{q}'^{\mu}),
\end{align*}
and the Jacobian is given by 
\begin{align*}
	\frac{\partial (q^{\mu},q'^{\mu})}{\partial(\bar{q}^{\mu},\bar{q}'^{\mu})} = \frac{1}{16}.
\end{align*}
Then the $g_c$, $s_c$ and $\theta_c$ are now expressed as
\begin{align}\label{gs}
g_c^2=\bar{g}^2-\frac{1}{2}(p^{\mu}+p'^{\mu})(\bar{q}_{\mu}-p_{\mu}-p'_{\mu}), \quad s_c=g_c^2+4, \quad 	\cos\theta_c= 1-2\frac{\bar{g}^2}{g_c^2}.
\end{align}
To calculate the Dirac-delta function and the unit step function in \eqref{B1}, we use followings.
First we use $\delta(x)\delta(y)=2\delta(x+y)\delta(x-y)$ to have 
\begin{align*}
	\delta(q^{\mu}q_{\mu}+1)\delta(q'^{\mu}q'_{\mu}+1) &= 2\delta(q^{\mu}q_{\mu}+q'^{\mu}q'_{\mu}+2)\delta(q^{\mu}q_{\mu}-q'^{\mu}q'_{\mu}) \cr
	&=4\delta((\bar{q}^{\mu}\bar{q}_{\mu}+\bar{q}'^{\mu}\bar{q}'_{\mu})+4)\delta(\bar{q}^{\mu}\bar{q}'_{\mu}).
\end{align*}
Note that $q^0\geq 0$ and $q'^0\geq 0$ are equivalent to $q^0+q'^0\geq 0$ and $q^0q'^0 \geq 0 $. Also, $q^0q'^0 \geq 0 $ is equivalent to $\bar{g}^2=2q^0q'^0-2q\cdot q'-2 \geq 0$ under the assumption that $q^{\mu}q_{\mu}+1 = 0 $ and $q'^{\mu}q'_{\mu}+1=0$. Thus we have
\begin{align*}
	u(q^0)u(q'^0)\delta(q^{\mu}q_{\mu}+1)\delta(q'^{\mu}q'_{\mu}+1) &= 
	u(\bar{q}^0)u(\bar{s}-4)4\delta((\bar{q}^{\mu}\bar{q}_{\mu}+\bar{q}'^{\mu}\bar{q}'_{\mu})+4)\delta(\bar{q}^{\mu}\bar{q}'_{\mu}).
\end{align*}
Thus we have
\begin{align*}
B&=\frac{1}{4}\int_{\mathbb{R}^4\times \mathbb{R}^4}d\Theta(\bar{q}^{\mu},\bar{q}'^{\mu}) s_c\sigma(g_c,\theta_c) \delta^{(4)}(p^{\mu}-p'^{\mu}+\bar{q}'^{\mu}) J\left(\frac{\bar{q}^0+\bar{q}'^0}{2}\right),
\end{align*}
where
\begin{align*}
d\Theta(\bar{q}^{\mu},\bar{q}'^{\mu})&=  d\bar{q}^{\mu}d\bar{q}'^{\mu}u(\bar{q}^0)u(\bar{s}-4)\delta((\bar{q}^{\mu}\bar{q}_{\mu}+\bar{q}'^{\mu}\bar{q}'_{\mu})+4)\delta(\bar{q}^{\mu}\bar{q}'_{\mu}).
\end{align*}
Now we substitute $\bar{q}'^{\mu}=p'^{\mu}-p^{\mu}$ to reduce the $4$-dimensional integration $d\bar{q}'^{\mu}$ by reducing the $4$-dimensional Dirac-delta function as follows:
\begin{align*}
B&=\frac{1}{4}\int_{\mathbb{R}^4}d\Theta(\bar{q}^{\mu}) s_c\sigma(g_c,\theta_c) J\left(\frac{\bar{q}^0+p'^0-p^0}{2}\right),
\end{align*}
where
\begin{align*}
d\Theta(\bar{q}^{\mu})&=  d\bar{q}^{\mu}u(\bar{q}^0)u(\bar{s}-4)\delta(\bar{q}^{\mu}\bar{q}_{\mu}+(p'^{\mu}-p^{\mu})(p'_{\mu}-p_{\mu})+4)\delta(\bar{q}^{\mu}(p'_{\mu}-p_{\mu})).
\end{align*}
To remove one more Dirac-delta function, we follow that
\begin{align*}
u(\bar{q}^0)\delta(\bar{q}^{\mu}\bar{q}_{\mu}+(p'^{\mu}-p^{\mu})(p'_{\mu}-p_{\mu})+4) 
&= u(\bar{q}^0)\delta(\bar{q}^{\mu}\bar{q}_{\mu}-2p^{\mu}p'_{\mu}+2) \cr
&= u(\bar{q}^0)\delta(-(\bar{q}^0)^2+|\bar{q}|^2+\bar{s}) \cr
&= \frac{\delta(\bar{q}^0-\sqrt{|\bar{q}|^2+\bar{s}})}{2\sqrt{|\bar{q}|^2+\bar{s}}}.
\end{align*}
Then the $d\bar{q}^0$ integral with the delta function above is reduced as follows:
\begin{align*}
B&=\frac{1}{4}J\left(\frac{p'^0-p^0}{2}\right)\int_{\mathbb{R}^3}\frac{d\bar{q}}{\bar{q}^0} s_c\sigma(g_c,\theta_c) J\left(\frac{\bar{q}^0}{2}\right)u(\bar{s}-4)\delta(\bar{q}^{\mu}(p'_{\mu}-p_{\mu})),
\end{align*}
where $\bar{q}^0$ is defined by $\sqrt{|\bar{q}|^2+\bar{s}}$. Since $\bar{s}-4=\bar{g}^2\geq0$, the last unit step function is always equal to $1$. In order to make the final Dirac-delta function even simpler, we consider the Lorentz transform satisfying
\begin{align*}
\Lambda(p_{\mu}+p'_{\mu}) &= (\sqrt{\bar{s}},0,0,0), \qquad \Lambda(p_{\mu}-p'_{\mu}) = (0,0,0,-\bar{g}).
\end{align*}
We can see that in \cite{MR2728733} that the Lorentz transform satisfying the relation above is uniquely determined by following form: 
\begin{align}\label{Lorentz}
\Lambda &= \left[ {\begin{array}{cccc}
\frac{p^0+p'^0}{\sqrt{\bar{s}}} & -\frac{p_1+p'_1}{\sqrt{\bar{s}}} & -\frac{p_2+p'_2}{\sqrt{\bar{s}}} & -\frac{p_3+p'_3}{\sqrt{\bar{s}}}  \\
\Lambda^{01} & \Lambda^{11} & \Lambda^{21} & \Lambda^{31}  \\
0 & \frac{(p \times p')_1}{|p\times p'|} & \frac{(p \times p')_2}{|p\times p'|} & \frac{(p \times p')_3}{|p\times p'|} \\
\frac{p^0-p'^0}{\bar{g}} & -\frac{p_1-p'_1}{\bar{g}} & -\frac{p_2-p'_2}{\bar{g}} &  -\frac{p_3-p'_3}{\bar{g}} 
\end{array} } \right],
\end{align}
where
\begin{align*}
	\Lambda^{i1} = \frac{2(p_i\{p^0+p'^0p^{\mu}p'_{\mu}\}+p'_i\{p'^0+p^0p^{\mu}p'_{\mu}\})}{\bar{g}\sqrt{\bar{s}}|p\times p'|}.
\end{align*}
Then the exponential part can also be expressed carrying out the Lorentz transform as follows: 
\begin{align*}
J\left(\bar{q}^0/2\right)  = \exp\left(-\bar{q}_0/2\right) = \exp\left(-\bar{q}^{\mu}U_{\mu}/2\right)= \exp\left(-\Lambda\bar{q}^{\mu}\Lambda U_{\mu}/2\right),
\end{align*}
where we used following simple four-vectors:
\begin{align*}
	U^{\mu} = (1,0,0,0), \qquad U_{\mu} = (-1,0,0,0).
\end{align*}
Applying this Lorentz transform yields 
\begin{align*}
B&= \frac{1}{4}J\left(\frac{p'^0-p^0}{2}\right)\int_{\mathbb{R}^3}\frac{d\bar{q}}{\bar{q}^0} s_{\Lambda}\sigma(g_{\Lambda},\theta_{\Lambda}) \exp\left(-\Lambda\bar{q}^{\mu}\Lambda U_{\mu}/2\right)\delta(\Lambda \bar{q}^{\mu}\Lambda(p'_{\mu}-p_{\mu})),
\end{align*}
where $g_{\Lambda}$ is written via the Lorentz transform:
\begin{align*}
g_{\Lambda}^2&=\bar{g}^2-\frac{1}{2}\Lambda(p^{\mu}+p'^{\mu})\Lambda(\bar{q}_{\mu}-p_{\mu}-p'_{\mu}) \cr
&=\bar{g}^2+\frac{1}{2}(\sqrt{\bar{s}},0,0,0)(\Lambda\bar{q}_{\mu}-(\sqrt{\bar{s}},0,0,0)).
\end{align*}
We can also represent $s_\Lambda$ and $\cos\theta_\Lambda$ in terms of $\bar{g}$ and $g_\Lambda$ as
\begin{align*}
s_{\Lambda}=g_{\Lambda}^2+4 \quad \text{and}\quad 	\cos\theta_{\Lambda}= 1-2\frac{\bar{g}^2}{g_{\Lambda}^2}.
\end{align*}
We now apply the change of variables $\Lambda\bar{q}^{\mu}= \bar{q}^{\mu}$. Since $d\bar{q}/\bar{q}^0$ is Lorentz invariant, we have
\begin{align*}
B&= \frac{1}{4}J\left(\frac{p'^0-p^0}{2}\right)\int_{\mathbb{R}^3}\frac{d\bar{q}}{\bar{q}^0} s_{\lambda}\sigma(g_{\lambda},\theta_{\lambda}) \exp\left(-\bar{q}^{\mu}\Lambda U_{\mu}/2\right)\delta(\bar{q}^3\bar{g}),
\end{align*}
where
\begin{align}\label{gs2}
g_{\lambda}^2&=\bar{g}^2+\frac{1}{2}\sqrt{\bar{s}}(\bar{q}^0-\sqrt{\bar{s}}), \quad s_{\lambda}=g_{\lambda}^2+4, \quad \cos\theta_{\lambda}= 1-2\frac{\bar{g}^2}{g_{\lambda}^2}.
\end{align}
Now we use the spherical coordinates for the variable $\bar{q}$:
\begin{align*}
\bar{q} = |\bar{q}|(\sin\psi\cos\phi,\sin\psi\sin\phi, \cos\psi).
\end{align*}
Then $B$ is equal to 
\begin{align*}
\frac{1}{4}J\left(\frac{p'^0-p^0}{2}\right)\int_{0}^{2\pi}d\phi \int_{0}^{\pi}d\psi \sin\psi  \int_{0}^{\infty}\frac{|\bar{q}|^2d|\bar{q}|}{\bar{q}^0} s_{\lambda}\sigma(g_{\lambda},\theta_{\lambda}) \exp\left(-\bar{q}^{\mu}\Lambda U_{\mu}/2\right)\delta(|\bar{q}|\cos\psi\bar{g}).
\end{align*}
Using $\delta(ax)=(1/a)\delta(x)$ and substituting $\psi= \pi/2$ reduce the $d\psi$ integral with $\delta(\cos\psi)$ as follows: 
\begin{align}\label{spher}
B&= \frac{1}{4}J\left(\frac{p'^0-p^0}{2}\right)\int_{0}^{2\pi}d\phi  \int_{0}^{\infty}\frac{|\bar{q}|d|\bar{q}|}{\bar{g}\bar{q}^0} s_{\lambda}\sigma(g_{\lambda},\theta_{\lambda}) \exp\left(-\bar{q}^{\mu}\Lambda U_{\mu}/2\right)|_{\psi=\pi/2}.
\end{align}
We first consider the scattering kernel $s_{\lambda}\sigma(g_{\lambda},\theta_{\lambda})$. The half-angle formula from \eqref{gs2} gives
\begin{align*}
\cos\theta_{\lambda}=1-2\sin^2\frac{\theta_{\lambda}}{2}= 1-2\frac{\bar{g}^2}{g_{\lambda}^2},
\end{align*}
which implies
\begin{align}\label{half angle}
	\sin\frac{\theta_{\lambda}}{2} =\frac{\bar{g}}{g_{\lambda}},
\end{align}
for  $0 \leq \theta_{\lambda} \leq 2\pi$. Thus we have 
\begin{align*}
\sigma(g_{\lambda},\theta_{\lambda}) = g_{\lambda}\sin\theta_{\lambda} = 2g_{\lambda}\sin\frac{\theta_{\lambda}}{2}\cos\frac{\theta_{\lambda}}{2} = 2\bar{g}\cos\frac{\theta_{\lambda}}{2} \leq 2\bar{g}.
\end{align*}
From \eqref{gs2} and the definition of $\bar{q}^0$, we have 
\begin{align*}
s_{\lambda}=g_{\lambda}^2+4=\bar{g}^2+4+\frac{1}{2}\sqrt{\bar{s}}(\bar{q}^0-\sqrt{\bar{s}}) = \bar{s}+\frac{1}{2}\sqrt{\bar{s}}(\sqrt{|\bar{q}|^2+\bar{s}}-\sqrt{\bar{s}}).
\end{align*}
Then we consider the exponential part of \eqref{spher}. Using the Lorentz transform \eqref{Lorentz}, we have
\begin{align*}
	\Lambda U_{\mu}=\left( \frac{p^0+p'^0}{\sqrt{\bar{s}}}, \frac{2 |p \times p'|}{\bar{g}\sqrt{\bar{s}}} , 0 , \frac{p^0-p'^0}{\bar{g}} \right).
\end{align*}
Combining with the spherically expression of $\bar{q}$, we calculate
\begin{align*}
\bar{q}^{\mu}\Lambda U_{\mu}|_{\psi=\pi/2} &= (\sqrt{|\bar{q}|^2+\bar{s}},|\bar{q}| \cos\phi,|\bar{q}| \sin\phi,0 )\left( \frac{p^0+p'^0}{\sqrt{\bar{s}}}, \frac{2 |p \times p'|}{\bar{g}\sqrt{\bar{s}}} , 0 , \frac{p^0-p'^0}{\bar{g}} \right) \cr
&=-\sqrt{|\bar{q}|^2+\bar{s}}\frac{p^0+p'^0}{\sqrt{\bar{s}}} + |\bar{q}| \cos\phi\frac{2 |p \times p'|}{\bar{g}\sqrt{\bar{s}}}.
\end{align*} 
Thus, \eqref{spher} is bounded by 
\begin{align*}
B&\leq \frac{1}{2}J\left(\frac{p'^0-p^0}{2}\right)\int_{0}^{2\pi}d\phi  \int_{0}^{\infty}\frac{|\bar{q}|d|\bar{q}|}{\sqrt{|\bar{q}|^2+\bar{s}}} s_{\lambda} \exp\left(-\sqrt{|\bar{q}|^2+\bar{s}}\frac{p^0+p'^0}{2\sqrt{\bar{s}}} + |\bar{q}| \cos\phi\frac{ |p \times p'|}{\bar{g}\sqrt{\bar{s}}}\right).
\end{align*}
We apply the change of variables $|\bar{q}|=\sqrt{\bar{s}}y$ to have 
\begin{align*}
B&\leq \frac{1}{2}J\left(\frac{p'^0-p^0}{2}\right)\int_{0}^{2\pi}d\phi \int_{0}^{\infty}\frac{\sqrt{\bar{s}}ydy}{\sqrt{y^2+1}} s_{\lambda} \exp\left(-\sqrt{y^2+1}\frac{p^0+p'^0}{2} + y \cos\phi\frac{ |p \times p'|}{\bar{g}}\right),
\end{align*}
where
\begin{align*}
s_{\lambda}=\bar{s}+\frac{1}{2}\sqrt{\bar{s}}(\sqrt{|\bar{q}|^2+\bar{s}}-\sqrt{\bar{s}}) = \frac{\bar{s}}{2}(1+\sqrt{y^2+1}).
\end{align*}
So we have
\begin{multline*}
B
\leq \frac{\bar{s}^{3/2}}{2}J\left(\frac{p'^0-p^0}{2}\right) \int_{0}^{\infty}\frac{ydy}{\sqrt{y^2+1}} \frac{1+\sqrt{y^2+1}}{2} \cr
\times  \exp\left(-\sqrt{y^2+1}\frac{p^0+p'^0}{2}\right)\int_{0}^{2\pi}d\phi  \exp\left(y \cos\phi\frac{ |p \times p'|}{\bar{g}}\right),
\end{multline*}
For the notational simplicity, we denote
\begin{align}\label{Rrdef}
	R= \frac{p^0+p'^0}{2}, \qquad r= \frac{ |p \times p'|}{\bar{g}},
\end{align}
then we can write
\begin{align*}
B\leq \frac{\bar{s}^{3/2}\pi}{2}J\left(\frac{p'^0-p^0}{2}\right) \int_{0}^{\infty}dy \left(\frac{y}{\sqrt{y^2+1}}+y\right)\exp\left(-R\sqrt{y^2+1}\right)I_0(ry),
\end{align*}
where $I_0$ denotes the modified Bessel function of the first kind:
\begin{align*}
I_0(y) = \frac{1}{\pi}\int_0^{\pi} e^{y\cos\phi} d\phi.
\end{align*}
The formula including above Bessel function can be found in   \cite{MR3307944} and \cite{MR1211782} when $R> r\geq0 $:
\begin{align}\label{Bessel comp}
\begin{split}
I&= \int_{0}^{\infty} \frac{y}{\sqrt{y^2+1}}e^{-R\sqrt{y^2+1}}  I_0\left(ry\right) dy = \frac{e^{-\sqrt{R^2-r^2}}}{\sqrt{R^2-r^2}}, \cr
II&= \int_{0}^{\infty} y e^{-R\sqrt{y^2+1}} I_0\left(ry\right) dy = \frac{R}{R^2-r^2}\left(1+ \frac{1}{\sqrt{R^2-r^2}}\right)e^{-\sqrt{R^2-r^2}}.
\end{split}
\end{align}Therefore, we have
\begin{align}\label{B3}
B&\leq \frac{\bar{s}^{3/2}\pi}{2}J\left(\frac{p'^0-p^0}{2}\right) \left[\frac{1}{\sqrt{R^2-r^2}}+\frac{R}{R^2-r^2}+\frac{R}{(R^2-r^2)^{3/2}}\right]e^{-\sqrt{R^2-r^2}}.
\end{align}
We can also find the useful estimates of $R^2-r^2$ in \cite{MR1211782} as
\begin{align}\label{Rr}
R^2-r^2 
= |p-p'|^2\frac{\bar{g}^2+4}{4\bar{g}^2} 
\geq \max\left\{\frac{\bar{g}^2}{4}+1,\frac{1}{4}|p-p'|^2\right\},
\end{align}
which lead to 
\begin{align*}
J\left(\frac{p'^0-p^0}{2}\right)e^{-\sqrt{R^2-r^2}} \leq e^{-\frac{1}{2}(p'^0-p^0)}e^{-\frac{1}{2}|p-p'|} \leq 1.
\end{align*}
In the last inequality, we used 
\begin{align*}
|p'^0-p^0| = \big| \sqrt{1+|p'|^2}-\sqrt{1+|p|^2}\big| = 
\frac{|(|p'|-|p|)(|p'|+|p|)|}{\sqrt{1+|p'|^2}+\sqrt{1+|p|^2}}
\leq |p'-p|.
\end{align*}
The estimates \eqref{Rr} also implies that 
\begin{align*}
\sqrt{R^2-r^2} &\geq 1,\qquad \frac{1}{R^2-r^2} \leq \frac{4}{\bar{s}}.
\end{align*}
Thus the three terms inside the large bracket of \eqref{B3} are bounded as follows:
\begin{align*}
\frac{1}{\sqrt{R^2-r^2}}+\frac{R}{(R^2-r^2)^{3/2}} \leq \frac{2R}{R^2-r^2} \leq 4\frac{p^0+p'^0}{\bar{s}}.
\end{align*}
Combining the estimates above yields 
\begin{align}\label{B esti}
B&\leq C \bar{s}^{1/2}(p^0+p'^0).
\end{align}
Now substituting it in \eqref{gamma21}, we turn back to estimate of $\Gamma_{2,1}$ as
\begin{align*}
\big| \langle \Gamma_{2,1}(f,h) ,\eta \rangle_{L^2_p} \big|&\leq C \int_{\mathbb{R}^3}\frac{dp}{p^0}\int_{\mathbb{R}^3}\frac{dp'}{p'^0}\bar{s}^{\frac{1}{2}}(p^0+p'^0)J(p'^0/2)|f(p)||h(p')||\eta(p)|.
\end{align*}
We use $\bar{s}\leq 4 p^0p'^0$ to have 
\begin{align*}
\big| \langle \Gamma_{2,1}(f,h) ,\eta \rangle_{L^2_p} \big|&\leq C \int_{\mathbb{R}^3}dp\int_{\mathbb{R}^3}dp'\left( \sqrt{\frac{p^0}{p'^0}}+\sqrt{\frac{p'^0}{p^0}}\right)J(p'^0/2)|f(p)||h(p')||\eta(p)|.
\end{align*}
Using the boundedness $1\leq p^0$ and $\sqrt{p'^0}J(p'^0/4)\leq C$ yields
\begin{align*}
	\big| \langle \Gamma_{2,1}(f,h) ,\eta \rangle_{L^2_p} \big|&\leq C \int_{\mathbb{R}^3}dp\int_{\mathbb{R}^3}dp'(p^0)^{\frac{1}{2}}J(p'^0/4)|f(p)||h(p')||\eta(p)|.
\end{align*}
Finally, we apply the H\"{o}lder inequality and obtain
\begin{align*}
\big| \langle \Gamma_{2,1}(f,h) ,\eta \rangle_{L^2_p} \big|&\leq C \|h\|_{L^2_p} \int_{\mathbb{R}^3}dp~(p^0)^{\frac{1}{2}}|f(p)||\eta(p)| \cr
&\leq C \|f\|_{\nu}\|h\|_{L^2_p}\|\eta\|_{\nu}.
\end{align*}
\end{proof}

Regarding the linear terms $K_1f$ and $K_2f$ in \eqref{Kf}, we have the following estimates.
\begin{lemma}\label{K esti} For $i=1,2$, the compact operator $K_i$ satisfies the following estimate.
\begin{align*}
\langle	K_if , h \rangle_{L^2_p}&\leq C \|f \|_{\nu}\|h\|_{\nu}.
\end{align*}
\end{lemma}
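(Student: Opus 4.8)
The plan is to estimate the two operators separately, using the pointwise upper bounds on $|K_1 f|$ and $|K_2 f|$ recorded in the previous lemma, and exploiting that $K_1$ evaluates $f$ at the pre-collisional momentum $q$ while $K_2$ evaluates it at the post-collisional momentum $p'$.

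For $K_1$ the argument is short: since $f(q)$ and $h(p)$ sit at different momenta, pairing against $h$ gives
\begin{align*}
\big|\langle K_1 f, h\rangle_{L^2_p}\big| \le C\int_{\mathbb{R}^3}\!\int_{\mathbb{R}^3}\!\int_{\mathbb{S}^2} v_{\o}\sigma(g,\theta)\, J(p'^0/2)J(q'^0/2)\,|f(q)|\,|h(p)|\,dp\,dq\,dw,
\end{align*}
and I would use the energy conservation law to replace $J(p'^0/2)J(q'^0/2)$ by $J(p^0/2)J(q^0/2)$, together with the crude bound $v_{\o}\sigma(g,\theta)\le C(p^0 q^0)^{1/2}$ (from $s\le 4p^0 q^0$ and $g\le 2(p^0 q^0)^{1/2}$, already used in Lemma~\ref{nonlin ff}). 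The integrand then factorizes over $p$ and $q$, the $dw$ integral contributes a constant, and two applications of the Cauchy--Schwarz inequality together with $\int_{\mathbb{R}^3}(p^0)^{1/2}J(p^0)\,dp<\infty$ yield $|\langle K_1 f,h\rangle_{L^2_p}|\le C\|f\|_{L^2_p}\|h\|_{L^2_p}\le C\|f\|_\nu\|h\|_\nu$.

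The operator $K_2$ is where the genuine difficulty lies: $f(p')$ is tied to $h(p)$ through the collision geometry, and — exactly as for $\Gamma_{2,1}$ in Lemma~\ref{nonlin ff} — a naive Cauchy--Schwarz followed by the pre-post change of variables $(p,q)\leftrightarrow(p',q')$ loses too much weight, because $g^2\sqrt{s}$ grows like $(p^0 q^0)^{3/2}$ rather than $(p^0q^0)^{1/2}$. The plan is therefore to reuse essentially verbatim the machinery built for $\Gamma_{2,1}$: starting from $|K_2 f(p)|\le C\int dq\,dw\, v_{\o}\sigma(g,\theta)J(q^0/2)J(q'^0/2)|f(p')|$, lift the $dw$ integral to a $dp'\,dq'$ integral with a four-dimensional Dirac delta, then lift $dq$ and $dq'$ to the four-vector integrals $dq^\mu,dq'^\mu$ with the mass-shell deltas and step functions; apply the symmetric change of variables $\bar q^\mu=q^\mu+q'^\mu$, $\bar q'^\mu=q^\mu-q'^\mu$ (under which $J(q^0/2)J(q'^0/2)=J(\bar q^0/2)$ directly, so this case is in fact slightly cleaner than $\Gamma_{2,1}$); reduce the remaining deltas, using the Lorentz transform $\Lambda$ of \eqref{Lorentz} to turn $\delta(\bar q^\mu(p'_\mu-p_\mu))$ into $\tfrac1{\bar g}\delta(\bar q^3)$; remove the resulting $1/\bar g$ singularity via the half-angle identity $\sigma(g_\lambda,\theta_\lambda)=2\bar g\cos(\theta_\lambda/2)\le 2\bar g$; evaluate the remaining angular and radial integrals through the modified Bessel-function formulas \eqref{Bessel comp}; and finally use \eqref{Rr} to extract the decay $e^{-\sqrt{R^2-r^2}}\le e^{-|p-p'|/2}$. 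This produces a kernel bound of the same shape as \eqref{B esti},
\begin{align*}
\int_{\mathbb{R}^3}\frac{dq}{q^0}\int_{\mathbb{R}^3}\frac{dq'}{q'^0} s\sigma(g,\theta)\,\delta^{(4)}(p^\mu+q^\mu-p'^\mu-q'^\mu)\,J(q^0/2)J(q'^0/2)\le C\,\bar s^{1/2}(p^0+p'^0)\,e^{-|p-p'|/2},
\end{align*}
whence, dividing by $p^0 p'^0$ and using $\bar s\le 4p^0 p'^0$,
\begin{align*}
\big|\langle K_2 f, h\rangle_{L^2_p}\big|\le C\int_{\mathbb{R}^3}\!\int_{\mathbb{R}^3}\Big[\big(\tfrac{p^0}{p'^0}\big)^{1/2}+\big(\tfrac{p'^0}{p^0}\big)^{1/2}\Big]e^{-|p-p'|/2}\,|f(p')|\,|h(p)|\,dp\,dp'.
\end{align*}

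To close, I would run a Schur-type estimate on the symmetric kernel $k(p,p')=\big[(p^0/p'^0)^{1/2}+(p'^0/p^0)^{1/2}\big]e^{-|p-p'|/2}$. Using $p^0\le 2p'^0(1+|p-p'|)$ one gets $\int_{\mathbb{R}^3}e^{-|p-p'|/2}(p^0)^{1/2}\,dp\le C(p'^0)^{1/2}$ and $\int_{\mathbb{R}^3}e^{-|p-p'|/2}(p^0)^{-1/2}\,dp\le C$, hence $\int_{\mathbb{R}^3}k(p,p')\,dp\le C(p'^0)^{1/2}$ and, symmetrically, $\int_{\mathbb{R}^3}k(p,p')\,dp'\le C(p^0)^{1/2}$; splitting $k=k^{1/2}\cdot k^{1/2}$ and applying Cauchy--Schwarz gives $|\langle K_2 f, h\rangle_{L^2_p}|\le C\|f\|_\nu\|h\|_\nu$. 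As a shortcut, since the pointwise bounds on $K_1$ and $K_2$ coincide with those for the linear part of the non-quantum relativistic Boltzmann operator, the lemma could also be quoted from \cite{MR933458,MR1211782,MR2728733}. The main obstacle is the $K_2$ estimate — concretely the mixed pre-/post-collisional coupling together with the momentum singularity $1/\bar g$, which is dealt with by the half-angle identity — while everything else is routine bookkeeping of weights.
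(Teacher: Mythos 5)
Your estimate for $K_1$ coincides with the paper's (energy conservation to move the exponential decay onto the pre-collisional variables, then the crude bound $v_\o\sigma(g,\theta)\leq C(p^0q^0)^{1/2}$ and Cauchy--Schwarz). Your estimate for $K_2$ is correct, but it takes a genuinely different and considerably heavier route than the paper. After pairing with $h$, the paper observes that the resulting integral, $\int dp\,dq\,dw\, v_\o\sigma(g,\theta)J(q^0/2)J(q'^0/2)|f(p')||h(p)|$, is exactly the integral bounding $|\langle\Gamma_6(f,h),\eta\rangle_{L^2_p}|$ in \eqref{gamma6} with the $q'$-slot filled by the fixed decaying function $J(\cdot/2)$ and with $\eta$ renamed $h$; the already-proved estimate on $\Gamma_6$ from Lemma~\ref{nonlin ff} then gives $|\langle K_2 f,h\rangle_{L^2_p}|\leq C\left(\|f\|_{L^2_p}\|J(\cdot/2)\|_\nu+\|f\|_\nu\|J(\cdot/2)\|_{L^2_p}\right)\|h\|_\nu\leq C\|f\|_\nu\|h\|_\nu$ in one line. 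You instead rebuild the full $\Gamma_{2,1}$ machinery (four-vector lift, Lorentz transform, Bessel identities, decay from \eqref{Rr}) and close with a Schur test; I checked that this is internally consistent, that the kernel bound $C\bar{s}^{1/2}(p^0+p'^0)e^{-|p-p'|/2}$ is right (indeed cleaner than for $\Gamma_{2,1}$ since the $J((p'^0-p^0)/2)$ growth is absent), and that the Schur row and column integrals are bounded by $C(p^0)^{1/2}$ and $C(p'^0)^{1/2}$. However, your stated reason for avoiding the lighter route is a misdiagnosis. Cauchy--Schwarz plus the full pre-post change of variables $(p,q)\leftrightarrow(p',q')$ does not ``lose weight'' for $K_2$: in the $\Gamma_6$-style splitting one factor carries $v_\o\sigma^2\sim (p^0q^0)^{1/2}$ and the other only $v_\o$, and the pre-post Jacobian $p^0q^0/(p'^0q'^0)$ is harmless. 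The real obstruction in $\Gamma_{2,1}$ is structural: its integrand is $|f(p)||\eta(p)||h(p')|$, with two functions tied to $p$, so isolating $\int|h(p')|^2$ forces the partial change of variables $q\mapsto p'$ at fixed $(p,w)$, whose Jacobian has no uniform lower bound \cite{2006.02540}. For $K_2$ the two functions sit at $p$ and $p'$ respectively and the $\Gamma_6$-style separation applies directly, which is exactly what the paper exploits.
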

\begin{proof}
Recall the definition of $K_1$ in \eqref{Kf}. Using the energy conservation law, we can replace  post-collisional variables by pre-collisional variables on the exponential term:
\begin{align*}
K_1f
&\leq C\int_{\mathbb{R}^3}dq\int_{\mathbb{S}^2}dw~ v_{\o}\sigma(g,\theta) J(p^0/2)J(q^0/2)|f(q)|.
\end{align*}
Then by $v_{\o}\leq 1$ and $g\leq (p^0)^{1/2}(q^0)^{1/2}$ and the H\"{o}lder inequality, we have   
\begin{align*}
\big| \langle K_1f , h \rangle_{L^2_p} \big|&\leq C\|f\|_{L^2_p}\|h\|_{L^2_p}.
\end{align*}
For $K_2$, by the simple boundedness $\sqrt{m+\tau m^2(p)}\leq CJ(p^0/2)$, we have  
\begin{align*}
K_2f&\leq C\int_{\mathbb{R}^3}dq\int_{\mathbb{S}^2}dw~ v_{\o}\sigma(g,\theta) J(q^0/2)J(q'^0/2) |f(p')| .
\end{align*}
Then we take the inner product with $h$ and write it as 
\begin{align*}
\langle	K_2f , h \rangle_{L^2_p}&\leq C
\int_{\mathbb{R}^3}dp\int_{\mathbb{R}^3}dq\int_{\mathbb{S}^2}dw~ v_{\o}\sigma(g,\theta) J(q^0/2)J(q'^0/2) |f(p')||h(p)| .
\end{align*}
If we substitute $h(q')=J(q'^0/2)$ in $\Gamma_6$ in \eqref{gamma6}, then we get 
\begin{align*}
	\big| \langle \Gamma_6(f,J) ,h \rangle_{L^2_p} \big|&\leq C \int_{\mathbb{R}^3}dp\int_{\mathbb{R}^3}dq\int_{\mathbb{S}^2}dw \ v_{\o}\sigma(g,\theta) J(q^0/2)J(q'^0/2)|f(p')||h(p)|.
\end{align*}
Thus, the estimates of $\Gamma_6(f,h)$ gives 
\begin{align*}
	\big| \langle K_2f , h \rangle_{L^2_p} \big|&\leq C\|f\|_{\nu}\|h\|_{\nu}.
\end{align*}
\end{proof}

\subsection{Estimates of the third-order nonlinear terms}

In this subsection, we provide the upper-bound estimates for the third-order nonlinear terms. 
\begin{lemma}\label{nonlin fff} We have
\begin{align*}
\big|\langle T(f,h,\eta) ,\xi \rangle_{L^2_p} \big| \leq  C\left( \|f\|_{L^2_p}\|h\|_{L^2_p}\|\eta\|_{\nu} +\|f\|_{L^2_p}\|h\|_{\nu}\|\eta\|_{L^2_p}  +\|f\|_{\nu }\|h\|_{L^2_p}\|\eta\|_{L^2_p} \right)\|\xi\|_{\nu}.
\end{align*}
\end{lemma}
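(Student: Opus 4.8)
The plan is to handle the four pieces $T_1,\dots,T_4$ of \eqref{T} one at a time, after the preliminary observation that the substitution $w\mapsto -w$, which swaps $p'$ and $q'$ and preserves $\sigma=g\sin\theta$ by \eqref{sigma}, turns $T_2$ into a term of exactly the same shape as $T_1$, so that only three genuinely distinct cases remain. The toolkit is the same as in Lemma \ref{nonlin ff}: the pointwise bound $v_{\o}\sigma(g,\theta)\le C(p^0q^0)^{1/2}$ coming from $g^2=s-4\le s\le 4p^0q^0$, the envelopes $\sqrt{(m+\tau m^2)(p)}\le CJ(p^0/2)$ and, from \eqref{m/m-m^2} together with the boundedness of $1+\tau m$, $1/\sqrt{(m+\tau m^2)(p)}\le CJ(p^0/2)^{-1}$, and, crucially, the invariance of the equilibrium products over a collision furnished by \eqref{equal}. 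Combined with the energy conservation $p^0+q^0=p'^0+q'^0$, the latter lets me move the Gaussian weights $J$ freely among the four momenta; in particular it shows that for $T_4$, whose prefactor $1/\sqrt{(m+\tau m^2)(p)}$ grows in $p^0$, the net exponential weight is still a genuine Maxwellian in the $q$-variable, because $p^0\le p'^0+q'^0$.

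For each $T_i$ the routine is: bound the equilibrium factors by their $J$-envelopes, redistribute the decay by \eqref{equal} and energy conservation so that a factor $J(\cdot^0/2)$ sits on the slot of whichever of $f,h,\eta$ I intend to peel off, and then apply the Cauchy--Schwarz inequality in the collision measure $dp\,dq\,dw\,v_{\o}\sigma$, pulling the functions of $p$ out of the $dq\,dw$ integral whenever $\xi(p)$ or $f(p)$ occurs. One of the resulting quadratic factors always pairs two functions living in distinct pre-collisional slots and is closed at once using $v_{\o}\sigma\le C(p^0q^0)^{1/2}$ and the boundedness of $(q^0)^{1/2}J(q^0/2)$, producing a plain $\|\cdot\|_{L^2_p}$; the other factor pairs a function at a post-collisional momentum with $\xi(p)$ (or with another pre-collisional function) and is treated either by the pre--post change of variables \eqref{pq,p'q'}, exactly as $\Gamma_1,\Gamma_3,\Gamma_6$ are in Lemma \ref{nonlin ff}, or, when the post-collisional slot stays entangled with $p$ as in the case of $\Gamma_{2,1}$, by lifting the $dq\,dw$ integral to $dq^{\mu}\,dq'^{\mu}$ with the extra Dirac deltas and carrying out the reduction of the quantity $B$ of \eqref{B}. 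The only novelty is that the Maxwellian inside $B$ is now $J(\cdot^0/2)$ or $J(\cdot^0/4)$ and may sit on $q'$ rather than on $q$; the reduction is word-for-word the same and still yields $B\le C\bar{s}^{1/2}(p^0+p'^0)$ as in \eqref{B esti}, because the decay $e^{-\sqrt{R^2-r^2}}$ produced in that computation absorbs the leftover factor $J(\pm(p'^0-p^0)/2)$ regardless of its sign, thanks to \eqref{Rr} and $|p^0-p'^0|\le|p-p'|$. Feeding $\bar{s}\le 4p^0p'^0$ into this converts $B$ into half-powers of $p^0$ and $p'^0$, which are reabsorbed partly into the surviving $J$-weights and partly into the $\nu$-weights of the norms.

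The main obstacle is exactly the one already isolated for $\Gamma_2$: the absence of a uniform lower bound for the Jacobian of $p\mapsto p'$ (or $q'$) forces the $B$-type lifting in every term carrying a perturbation at a post-collisional momentum together with $\xi(p)$ or $f(p)$, and one must keep track of the single extra half-power of $p^0$ (respectively $p'^0$) that each such reduction generates. The delicate bookkeeping is to split the Cauchy--Schwarz so that, at the end, exactly one of $f,h,\eta$ is charged with the stronger $\nu$-norm and the remaining two only with $\|\cdot\|_{L^2_p}$, while $\xi$ always carries $\|\cdot\|_{\nu}$, which is what matches the three terms in the statement; this is possible because each $B$-reduction costs only $(p^0)^{1/2}$ and there is always a spare $J(\cdot^0/2)$, freed up by the redistribution of the equilibrium weights, available to trade that half-power back for $L^2$-integrability. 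Finally, since $\nu(p)\ge c>0$ gives $\|\cdot\|_{L^2_p}\le C\|\cdot\|_{\nu}$, any bound one actually produces that dominates one of the three listed products is enough to conclude.
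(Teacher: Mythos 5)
Your proposal tracks the paper's proof of Lemma \ref{nonlin fff}: reduce $T_2$ to $T_1$ via $w\mapsto -w$, bound the equilibrium factors by $J$-envelopes, use energy conservation to redistribute decay (in particular to neutralize the $1/\sqrt{m+\tau m^2(p)}$ prefactor of $T_4$, turning the net weight into a Maxwellian in $q$), split by Cauchy--Schwarz, and close the entangled post-collisional slots either by the pre--post change of variables \eqref{pq,p'q'} or by the $B$-lifting from the $\Gamma_2$ estimate with the modified Maxwellian --- the paper confirms that $B\le C\bar s^{1/2}(p^0+p'^0)$ persists because $J(q^0/2)$ in place of $J(q^0)$ merely rescales $R$ and $r$ to $R/2$ and $r/2$, and the decay $e^{-\sqrt{R^2-r^2}}$ still dominates $J(\pm(p'^0-p^0)/2)$ via \eqref{Rr} and $|p^0-p'^0|\le|p-p'|$, exactly as you say. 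One small bookkeeping slip: you claim one Cauchy--Schwarz factor always pairs two pre-collisional slots, but for $T_3$ both factors have the shape (pre-collisional, post-collisional); the paper notes they coincide under $p'\leftrightarrow q'$ and handles both by the same $B$-reduction you describe after redistributing $J(p'^0/2)J(q'^0/2)=J(p^0/4)J(q^0/4)J(p'^0/4)J(q'^0/4)$, so your toolkit still closes the estimate.
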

\begin{proof}
Recall the definition of third-order nonlinear terms in \eqref{T}. Applying the change of variables $\phi \rightarrow \pi-\phi$ and $\theta\rightarrow \pi+\theta$, we have $T_1(f,h,\eta)=T_2(f,h,\eta)$. Thus we only consider the proof for $T_1$,$T_3$ and $T_4$. \newline
{\bf (1) Estimates of $T_1$: } Using the boundedness $\sqrt{m+\tau m^2(p)}\leq CJ(p^0/2)$, we have
\begin{align*}
|T_1(f,h,\eta)|
&\leq C \int_{\mathbb{R}^3}dq\int_{\mathbb{S}^2}dw~ v_{\o}\sigma(g,\theta)J(p'^0/2)J(q^0/2)|f(p)||h(q)||\eta(p')|.
\end{align*} 
We take the inner product with $\xi$ to have 
\begin{multline*}
\big| \langle T_1(f,h,\eta), \xi \rangle_{L^2_p} \big|  \cr
\leq C \int_{\mathbb{R}^3}dp \int_{\mathbb{R}^3}dq\int_{\mathbb{S}^2}dw~v_{\o}\sigma(g,\theta)J(p'^0/2)J(q^0/2) |f(p)||h(q)||\eta(p')||\xi(p)|.
\end{multline*}
Using the H\"{o}lder inequality, we disunite the integrand as follows: 
\begin{align*}
\big| \langle T_1(f,h,\eta), \xi \rangle_{L^2_p} \big| &\leq C \left(\int_{\mathbb{R}^3}dp \int_{\mathbb{R}^3}dq\int_{\mathbb{S}^2}dw~v_{\o}\sigma(g,\theta)J(p'^0/2)J(q^0/2) |f(p)|^2|h(q)|^2\right)^{\frac{1}{2}} \cr
&\times \left(\int_{\mathbb{R}^3}dp \int_{\mathbb{R}^3}dq\int_{\mathbb{S}^2}dw~v_{\o}\sigma(g,\theta)J(p'^0/2)J(q^0/2)|\eta(p')|^2|\xi(p)|^2\right)^{\frac{1}{2}} \cr
&= T_{11}\times T_{12}.
\end{align*}
We first consider $T_{11}$. Using $v_{\o} \leq C$ and $g\leq 2 \sqrt{p^0q^0}$, we have 
\begin{align*}
(T_{11})^2 &\leq \int_{\mathbb{R}^3}dp \int_{\mathbb{R}^3}dq\int_{\mathbb{S}^2}dw~(p^0)^{\frac{1}{2}}(q^0)^{\frac{1}{2}}J(p'^0/2)J(q^0/2) |h(q)|^2|f(p)|^2 \cr
&\leq C \int_{\mathbb{R}^3} dq|h(q)|^2 \int_{\mathbb{R}^3}dp(p^0)^{\frac{1}{2}}|f(p)|^2  \cr
&\leq C \|f\|_{\nu}^2\|h\|_{L^2_{p}}^2,
\end{align*}
where we used $(q^0)^{1/2}J(q^0/2)\leq C$. To estimate $T_{12}$, we rewrite $(T_{12})^2$ as in the form of \eqref{gamma21}:
\begin{multline*}
(T_{12})^2=\int_{\mathbb{R}^3}\frac{dp}{p^0} \int_{\mathbb{R}^3}\frac{dq}{q^0}\int_{\mathbb{R}^3}\frac{dp'}{p'^0}\int_{\mathbb{R}^3}\frac{dq'}{q'^0} s\sigma(g,\theta)J(p'^0/2)J(q^0/2)\\\times |\eta(p')|^2|\xi(p)|^2\delta^{(4)}(p^{\mu}+q^{\mu}-q'^{\mu}-q'^{\mu}).
\end{multline*}
Then we can find that the $dqdq'$ integral has a similar form as of \eqref{B}:
\begin{align*}
B'&= \int_{\mathbb{R}^3}\frac{dq}{q^0}\int_{\mathbb{R}^3}\frac{dq'}{q'^0} s\sigma(g,\theta)J(q^0/2) \delta^{(4)}(p^{\mu}+q^{\mu}-q'^{\mu}-q'^{\mu}).
\end{align*}
The only difference is that $J(q^0)$ is modified by $J(q^0/2)$. Thus the exponential growth $J((p'^0-p^0)/2)$ in \eqref{B3} is changed by $J((p'^0-p^0)/4)$. But since the $R$ and $r$ in \eqref{Rrdef} are also changed by $R/2$ and $r/2$, we can have
\begin{align*}
J\left(\frac{p'^0-p^0}{4}\right)e^{-\sqrt{R^2-r^2}/2} \leq e^{-\frac{1}{4}(p'^0-p^0)}e^{-\frac{1}{4}|p-p'|} \leq 1.
\end{align*}
Thus we can apply the estimate in \eqref{B esti}:
\begin{align*}
	B&\leq C \bar{s}^{1/2}(p^0+p'^0),
\end{align*}
which gives
\begin{align}\label{T_12}
(T_{12})^2	\leq C \int_{\mathbb{R}^3}\frac{dp}{p^0} \int_{\mathbb{R}^3}\frac{dp'}{p'^0}\bar{s}^{1/2}(p^0+p'^0)J(p'^0/2)|\eta(p')|^2|\xi(p)|^2.
\end{align}
Using $\bar{s} \leq C p^0p'^0$ and $(p'^0)^{1/2}J(p'^0/2)\leq C$, we have
\begin{align*}
(T_{12})^2	\leq C \int_{\mathbb{R}^3}dp(p^0)^{1/2}|\xi(p)|^2 \int_{\mathbb{R}^3}dp'|\eta(p')|^2 \leq C\|\eta\|_{L^2_p}^2\|\xi\|_{\nu}^2 .
\end{align*}
Combining the estimates of $T_{11}$ and $T_{12}$ yields
\begin{align*}
\big| \langle T_1(f,h,\eta), \xi \rangle_{L^2_p} \big| \leq C \|f\|_{\nu}\|h\|_{L^2_{p}}\|\eta\|_{L^2_p}\|\xi\|_{\nu}.
\end{align*}
{\bf (2) Estimates of $T_3$: } Using $\sqrt{m+\tau m^2(p)}\leq J(p^0/2)$, we estimate $T_3$ as follows: 
\begin{align*}
|T_3(f,h,\eta)|
&\leq \int_{\mathbb{R}^3}dq\int_{\mathbb{S}^2}dw~ v_{\o}\sigma(g,\theta)J(p'^0/2)J(q'^0/2)|f(p)||h(p')||\eta(q')|.
\end{align*}
Via the H\"{o}lder inequality, we can split the variable as follows: 
\begin{align*}
\big| \langle T_3(f,h,\eta), \xi \rangle_{L^2_p} \big| &\leq C \left(\int_{\mathbb{R}^3}dp \int_{\mathbb{R}^3}dq\int_{\mathbb{S}^2}dw~v_{\o}\sigma(g,\theta)J(p'^0/2)J(q'^0/2) |f(p)|^2|h(p')|^2\right)^{\frac{1}{2}} \cr
&\times \left(\int_{\mathbb{R}^3}dp \int_{\mathbb{R}^3}dq\int_{\mathbb{S}^2}dw~v_{\o}\sigma(g,\theta)J(p'^0/2)J(q'^0/2)|\xi(p)|^2|\eta(q')|^2\right)^{\frac{1}{2}}\cr
&= T_{31} \times T_{32}.
\end{align*}Via the change of variables $p' \leftrightarrow q'$, we can see that $T_{31}$ and $T_{32}$ are indeed identical. Thus we only consider $T_{31}$ part here. Similarly to the estimates of $\Gamma_2$, we rewrite $(T_{31})^2$ as follows:
\begin{multline*}
\int_{\mathbb{R}^3}\frac{dp}{p^0} \int_{\mathbb{R}^3}\frac{dq}{q^0}\int_{\mathbb{R}^3}\frac{dp'}{p'^0}\int_{\mathbb{R}^3}\frac{dq'}{q'^0} s\sigma(g,\theta)J(p'^0/2)J(q'^0/2)|f(p)|^2|h(p')|^2\delta^{(4)}(p^{\mu}+q^{\mu}-q'^{\mu}-q'^{\mu}).
\end{multline*}
Then we note that the energy conservation law and the exponential decays with respect to the two post-collisional momenta can together imply the decays with respect to all pre-post collisional variables:
\begin{align*}
J(p'^0/2)J(q'^0/2)=J(p^0/4)J(q^0/4)J(p'^0/4)J(q'^0/4).
\end{align*}
Then, we use the boundedness of $B$ in \eqref{B esti} as 
\begin{align*}
B&= \int_{\mathbb{R}^3}\frac{dq}{q^0}\int_{\mathbb{R}^3}\frac{dq'}{q'^0} s\sigma(g,\theta)J(q^0/4) \delta^{(4)}(p^{\mu}+q^{\mu}-q'^{\mu}-q'^{\mu}) \leq C \bar{s}^{1/2}(p^0+p'^0),
\end{align*}
which yields
\begin{align*}
(T_{31})^2	\leq C \int_{\mathbb{R}^3}\frac{dp}{p^0} \int_{\mathbb{R}^3}\frac{dp'}{p'^0}\bar{s}^{1/2}(p^0+p'^0)J(p^0/4)J(p'^0/4)|f(p)|^2|h(p')|^2.
\end{align*}
Using $\bar{s} \leq C p^0p'^0$ and $(p^0)^{1/2}J(p^0/4)\leq C$, we have
\begin{align*}
(T_{31})^2	\leq C \int_{\mathbb{R}^3}dp|f(p)|^2 \int_{\mathbb{R}^3}dp'|h(p')|^2 \leq C\|f\|_{L^2_p}^2\|h\|_{L^2_p}^2 .
\end{align*}
Since $T_{32}$ has the same form with $T_{31}$, we have
\begin{align*}
\big| \langle T_3(f,h,\eta), \xi \rangle_{L^2_p} \big| \leq C \|f\|_{L^2_p} \|h\|_{L^2_p} \|\eta\|_{L^2_p} \| \xi \|_{L^2_p}.
\end{align*}

{\bf (3) Estimates of $T_4$: }
Recall the definition of $T_4$ in \eqref{T}. Using Lemma \ref{mJ}, we can have $CJ(p^0/2) \leq \sqrt{m+\tau m^2(p)} \leq CJ(p^0/2)$, which yields 
\begin{align*}
|T_4(f,h,\eta)|&\leq C\int_{\mathbb{R}^3}dq\int_{\mathbb{S}^2}dw~ v_{\o}\sigma(g,\theta)  \frac{J(q^0/2)J(p'^0/2)J(q'^0/2)}{J(p^0/2)} |f(q)||h(p')||\eta(q')|.
\end{align*}
Using the energy conservation law $J(p'^0/2)J(q'^0/2)=J(p^0/2)J(q^0/2)$, we have 
\begin{align*}
|T_4(f,h,\eta)|&\leq \int_{\mathbb{R}^3}dq\int_{\mathbb{S}^2}dw~ v_{\o}\sigma(g,\theta)  J(q^0) |f(q)||h(p')||\eta(q')|.
\end{align*}
Then we take the inner product with $\xi$ to have  
\begin{align*}
\big| \langle T_4(f,h,\eta), \xi \rangle_{L^2_p} \big| 
&\leq C \int_{\mathbb{R}^3}dp \int_{\mathbb{R}^3}dq\int_{\mathbb{S}^2}dw~v_{\o}\sigma(g,\theta)J(q^0) |f(q)||h(p')||\eta(q')||\xi(p)|.
\end{align*}
Via the H\"{o}lder inequality, we divide the variable into two pre-collisional variables and two post-collisional variables as follows: 
\begin{align*}
\big| \langle T_4(f,h,\eta), \xi \rangle_{L^2_p} \big| 
&\leq C \left(\int_{\mathbb{R}^3}dp \int_{\mathbb{R}^3}dq\int_{\mathbb{S}^2}dw~v_{\o}\sigma(g,\theta)J(q^0) |f(q)|^2|\xi(p)|^2\right)^{\frac{1}{2}} \cr
&\times \left(\int_{\mathbb{R}^3}dp \int_{\mathbb{R}^3}dq\int_{\mathbb{S}^2}dw~v_{\o}\sigma(g,\theta)J(q^0) |h(p')|^2|\eta(q')|^2\right)^{\frac{1}{2}} \cr
&= T_{41} \times T_{42}.
\end{align*}
For the estimates of $T_{41}$, we use $v_{\o}\sigma(g,\theta) \leq C (p^0)^{1/2}(q^0)^{1/2}$ to have
\begin{align*}
(T_{41})^2&\leq C \int_{\mathbb{R}^3}dp \int_{\mathbb{R}^3}dq\int_{\mathbb{S}^2}dw~(p^0)^{\frac{1}{2}}(q^0)^{\frac{1}{2}}J(q^0) |f(q)|^2|\xi(p)|^2
\leq C \|f\|_{L^2_p}^2\|\xi\|_{\nu}^2,
\end{align*}
where we used $(q^0)^{\frac{1}{2}}J(q^0) \leq C$. 
Similarly, we have 
\begin{align*}
(T_{42})^2 &\leq C \int_{\mathbb{R}^3}dp \int_{\mathbb{R}^3}dq\int_{\mathbb{S}^2}dw~ (p^0)^{\frac{1}{2}} |h(p')|^2|\eta(q')|^2.
\end{align*}
We use $(p^0)^{1/2}\leq (p'^0)^{1/2}+(q'^0)^{1/2}$ to have 
\begin{align*}
(T_{42})^2 &\leq C \int_{\mathbb{R}^3}dp \int_{\mathbb{R}^3}dq\int_{\mathbb{S}^2}dw~\left((p'^0)^{\frac{1}{2}}+(q'^0)^{\frac{1}{2}}\right) |h(p')|^2|\eta(q')|^2.
\end{align*}
Then applying the pre-post collisional change of variables $(p,q) \leftrightarrow (p',q')$ as in \eqref{pq,p'q'} yields 
\begin{align*}
T_{42}  &\leq C \left(\| h \|_{L^2_p} \| \eta \|_{\nu}  + \| h \|_{\nu} \| \eta \|_{L^2_p} \right).
\end{align*}
Combining the estimates of $T_{41}$ and $T_{42}$ yields 
\begin{align*}
\big| \langle T_4(f,h,\eta), \xi \rangle_{L^2_p} \big|&\leq C \left(\| h \|_{L^2_p} \| \eta \|_{\nu}  + \| h \|_{\nu} \| \eta \|_{L^2_p} \right)\|f\|_{L^2_p}\|\xi\|_{\nu}.
\end{align*}
\end{proof}

\section{Local existence}\label{sec:localintime}
In this section, we construct the local-in-time classical solution. Here we briefly introduce a main difference of the quantum Boltzmann theory from the Newtonian one. 
From the statistical description of quantum Boltzmann equation in \cite{MR0258399}, we denote that the term $(1-F(p))$ in the collision operator for fermions is the probability that a fermion is being placed in the $p$ momentum place after a collision. Thus the ratio $(1-F(p))$ has to be non-negative. Then, in each iteration scheme in the proof of the local existence, the collision operator depending on $F^{n+1}$ is well-defined only when $F^{n+1}$ is in the interval $[0,1]$ in the case of fermions. Thus we need to prove the boundedness of the solution $F^{n+1}$ in each iteration step as well. 
\begin{theorem}\label{Local}
Let $N\geq 3$. Suppose that the initial data $F_0$ satisfies 
\begin{align*}
\left\{\begin{array}{ll} 0 \leq F_0(x,p)=m(p)+\sqrt{m(p)-m^2(p)}f_0(x,p) \leq 1, \quad \mbox{for fermion} \\ 0 \leq F_0(x,p)=m(p)+\sqrt{m(p)+m^2(p)}f_0(x,p), \quad \hspace{6mm} \mbox{for boson} , \end{array}  \right.
\end{align*}
Then there exist $M_0>0$ and $T_*>0$ such that if $\mathcal{E}(f_0)\leq \frac{M_0}{2}$ then there exists a unique local-in-time solution f(x,p,t) of \eqref{pertf} satisfying
\begin{enumerate}
\item The higher order energy is uniformly bounded:
\[\sup_{0 \leq t\leq T_*}\mathcal{E}(f(t))\leq M_0.\]
\item The distribution function is bounded in $t\in[0,T_*]$:
\begin{align*}
\left\{\begin{array}{ll} 0\leq F(x,p,t)=m(p)+\sqrt{m(p)-m^2(p)}f(x,p,t) \leq 1, \quad \mbox{for fermions} \\ 0 \leq F(x,p,t)=m(p)+\sqrt{m(p)+m^2(p)}f(x,p,t), \quad \hspace{6mm} \mbox{for bosons} , \end{array}  \right.
\end{align*}
\item The higher order energy norm $\mathcal{E}(f(t))$ is continuous in $t\in[0,T_*]$.
\end{enumerate}
\end{theorem}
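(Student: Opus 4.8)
The plan is to realise $F$ as the limit of the iteration \eqref{itera}, proving at each step a pointwise bound by a maximum principle for a linear transport equation and a uniform bound for $\mathcal{E}$ by a Gr\"onwall argument, and then to pass to the limit in a weaker norm. \textit{Setup and the linear problem.} Set $f^0\equiv 0$, so $F^0=m$. Assume inductively that $f^n$ is defined on $[0,T]$ with $\sup_{[0,T]}\mathcal{E}(f^n)\le M_0$ and, in the fermionic case, $0\le F^n\le 1$. Then the kernel estimates and changes of variables of Section \ref{sec:nonlinear}, together with the Sobolev embedding $H^N_x\hookrightarrow L^\infty_x$ for $N\ge 3$, show that the coefficient $-\tau G(F^n)+R(F^n)$ and the source $G(F^n)$ in \eqref{itera2} are finite, nonnegative, and regular enough that \eqref{itera2} with datum $F_0$ can be solved along the straight characteristics $s\mapsto x-\hat p(t-s)$, producing $F^{n+1}$ in the class $H^N_xL^2_p$. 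Putting $f^{n+1}=(F^{n+1}-m)/\sqrt{m+\tau m^2}$ and using that the part of $-\tau G(F^n)+R(F^n)$ of order zero in $f^n$ equals exactly $\nu(p)$ (a short computation from \eqref{nu} and \eqref{equal}), one finds that $f^{n+1}$ solves a linear equation of the schematic form
\begin{align*}
\partial_t f^{n+1}+\hat p\cdot\nabla_x f^{n+1}+\nu(p)f^{n+1}=\ell[f^n]f^{n+1}+\mathcal{S}[f^n],
\end{align*}
where $\ell[f^n]$ is linear in $f^{n+1}$ with coefficients vanishing with $f^n$, and the source $\mathcal{S}[f^n]$ — which contains $-K_1f^n+K_2f^n$ and the $\Gamma_i$-, $T_i$-type contributions in $f^n$ — depends on $f^n$ alone; letting $f^{n+1},f^n\to f$ formally returns \eqref{pertf}.

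\textit{Propagation of the pointwise bound.} For fermions the absorption coefficient $-\tau G(F^n)+R(F^n)=G(F^n)+R(F^n)$ and the source $G(F^n)$ are nonnegative once $0\le F^n\le 1$, so the Duhamel formula for $F^{n+1}$ with $F_0\ge 0$ gives $F^{n+1}\ge 0$; for bosons the same follows from the positivity of the exponential integrating factor together with $G(F^n)\ge 0$ (which needs only $F^n\ge 0$) and $F_0\ge 0$. For the fermionic upper bound, $1-F^{n+1}$ solves the same transport equation with source $R(F^n)\ge 0$ and datum $1-F_0\ge 0$, hence $1-F^{n+1}\ge 0$. This closes the induction on item (2).

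\textit{Uniform energy estimate.} Apply $\partial^\alpha$ to the equation for $f^{n+1}$, pair with $\partial^\alpha f^{n+1}$ in $L^2_{x,p}$, and sum over $|\alpha|\le N$: the transport term drops by integration by parts on $\mathbb{T}^3$, and $\langle\nu\partial^\alpha f^{n+1},\partial^\alpha f^{n+1}\rangle=\|\partial^\alpha f^{n+1}\|_{x,\nu}^2$ is kept on the left. The right-hand side is controlled by the Leibniz rule, $H^N_x\hookrightarrow L^\infty_x$, and Lemmas \ref{nonlin ff}, \ref{nonlin fff}, \ref{K esti}; every resulting term either carries a factor $\|\partial^\alpha f^{n+1}\|_{x,\nu}$ — absorbed into $\tfrac12\sum_{|\alpha|\le N}\|\partial^\alpha f^{n+1}\|_{x,\nu}^2$ by Young's inequality, the residue being $O(\mathcal{E}(f^n)\mathcal{E}(f^{n+1}))$ from the quadratic and cubic pieces — or, for the ``$\nu$-less'' source $-K_1f^n+K_2f^n$, is bounded by $L^2_{x,p}$-norms only via the $L^2_p$-boundedness of $K_i$. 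This yields a differential inequality which, after integration on $[0,t]$ and use of $\sup_{[0,T]}\mathcal{E}(f^n)\le M_0$ and $\mathcal{E}(f_0)\le M_0/2$, has the schematic form $\mathcal{E}(f^{n+1}(t))\le\big(\tfrac{M_0}{2}+C M_0^2+C t M_0\big)e^{C t}$ on $[0,T]$; choosing first $M_0$ small and then $T_*=T_*(M_0)\le T$ small makes the right-hand side $\le M_0$, so $\sup_{[0,T_*]}\mathcal{E}(f^{n+1})\le M_0$ and the induction on the energy closes. This estimate is the technical heart of the proof and the main obstacle: because \eqref{itera} places only the diagonal dissipation $\nu(p)f^{n+1}$ — not the full $Lf^{n+1}$ — on the left, the linear-in-$f^n$ terms $-K_1f^n+K_2f^n$ and the higher-order pieces must be tamed on the right \emph{without} the coercivity of $L$, by a delicate interplay of Young's inequality against the $\nu$-dissipation of $f^{n+1}$, the $L^2$-boundedness of $K_i$, and the smallness of $M_0$ and $T_*$; this, run in tandem with the maximum-principle step, is where the argument needs the most care.

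\textit{Convergence, uniqueness, continuity.} For $g^n:=f^{n+1}-f^n$, subtracting the equations and repeating the energy estimate at the base level (no $x$-derivatives; the $H^N_x$ norms of $f^n,f^{n-1}$, bounded by $M_0$, sitting in the coefficients) gives
\begin{align*}
\sup_{[0,T_*]}\|g^n\|_{L^2_{x,p}}^2+\int_0^{T_*}\|g^n\|_{x,\nu}^2\,ds\ \le\ \kappa\Big(\sup_{[0,T_*]}\|g^{n-1}\|_{L^2_{x,p}}^2+\int_0^{T_*}\|g^{n-1}\|_{x,\nu}^2\,ds\Big)
\end{align*}
with $\kappa=\kappa(M_0,T_*)<1$ after a further shrinking of $M_0,T_*$; hence $\{f^n\}$ converges to some $f$ in $C([0,T_*];L^2_{x,p})\cap L^2([0,T_*];L^2_{x,p,\nu})$. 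The uniform bound $\sup_{[0,T_*]}\mathcal{E}(f^n)\le M_0$, weak-$\ast$ compactness in the higher norms and lower semicontinuity give item (1); a.e.\ convergence of a subsequence of $F^n=m+\sqrt{m+\tau m^2}f^n$ transfers the $[0,1]$ bound to $F$, giving item (2); and passing to the limit in the weak formulation (the nonlinear terms converging by the same lemmas and the strong $L^2$-convergence) shows $f$ solves \eqref{pertf} with $f(0)=f_0$. Uniqueness follows by applying the difference estimate directly to two solutions with equal data, using Gr\"onwall with the $L^1_t$-coefficient furnished by the energy framework and $\langle L(f-\bar f),f-\bar f\rangle\ge 0$ (Lemma \ref{null space}). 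Finally, for item (3): $\int_0^t\sum_{|\alpha|\le N}\|\partial^\alpha f\|_{x,\nu}^2\,ds$ is continuous since its integrand lies in $L^1([0,T_*])$, and $t\mapsto\tfrac12\sum_{|\alpha|\le N}\|\partial^\alpha f(t)\|_{L^2_{x,p}}^2$ is continuous because $\partial^\alpha f$ is weakly continuous into $L^2_{x,p}$ while $\tfrac{d}{dt}\|\partial^\alpha f\|_{L^2_{x,p}}^2\in L^1([0,T_*])$ by the energy identity, so weak continuity upgrades to strong.
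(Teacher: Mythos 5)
Your proposal follows the same route as the paper: the iteration $(\partial_t+\hat p\cdot\nabla_x)F^{n+1}=Q(F^n,F^n,F^{n+1},F^n)$ rewritten in the form \eqref{itera2}, the maximum-principle / Duhamel argument to propagate $0\le F^{n+1}\le 1$ (your reformulation via $1-F^{n+1}$ is a clean equivalent of the paper's step), the observation that the zeroth-order part of $-\tau G(F^n)+R(F^n)$ is exactly $\nu(p)$, and the $\partial^\alpha$-energy estimate closed by smallness of $M_0$ and $T_*$. You give more detail than the paper on the contraction step for convergence and on uniqueness/continuity (which the paper dismisses as standard), and you seed the iteration with $F^0=m$ rather than $F^0=F_0$, but these are immaterial; the argument is correct and matches the paper.
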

\begin{proof}
We first take an iteration scheme as follows:
\begin{align}\label{iter}
	(\partial_t +\hat{p}\cdot \nabla_x)F^{n+1}=Q(F^n,F^n,F^{n+1},F^n),
\end{align}
with $F^0(x,p,t)=F_0(x,p)$. Note that $F^{n+1}$ in the operator $Q$ is placed in the $p$ variable location. We proceed the proof by applying the induction argument. Let us assume that 
\begin{align}\label{assume}
\left\{\begin{array}{ll}0 \leq F^n(x,p,t) \leq 1 \quad \mbox{for fermions,} \\ 0 \leq F^n(x,p,t) \qquad \hspace{3mm} \mbox{for bosons,} \end{array}  \ \text{and}\  \sup_{0 \leq t\leq T_*}\mathcal{E}(f^n(t))\leq M_0. \right .
\end{align}
Note that the collision operator $Q(F^n,F^n,F^{n+1},F^n)$ is well-defined when $F^{n+1}$ is in the interval $[0,1]$. We define
\begin{align*}
G(F)&=\int_{\mathbb{R}^3}dq\int_{\mathbb{S}^2}dw~v_{\o}\sigma(g,\theta)F(p')F(q')(1+\tau F(q)), \cr
R(F)&=\int_{\mathbb{R}^3}dq\int_{\mathbb{S}^2}dw~v_{\o}\sigma(g,\theta)(1+\tau F(p'))(1+\tau F(q'))F(q),
\end{align*}
to have
\begin{align*}
	Q(F^n,F^n,F^{n+1},F^n)=G(F^n)(1+\tau F^{n+1}(p))-R(F^n)F^{n+1}(p).
\end{align*}
Then we rewrite \eqref{iter} as follows:
\begin{align}\label{RG}
(\partial_t +\hat{p}\cdot \nabla_x-\tau G(F^n)+R(F^n))F^{n+1}=G(F^n).
\end{align}
In the case of fermions, we additionally consider the upper bound of $F^{n+1}$. We observe that the induction hypothesis $(\ref{assume})_1$ implies
\begin{align*}
	0\leq G(F^n), \quad 0 \leq R(F^n).
\end{align*}
Since $R(F^n)$ is positive, we have
\begin{align*}
(\partial_t +\hat{p}\cdot \nabla_x+G(F^n)+R(F^n))F^{n+1}\leq G(F^n)+R(F^n).
\end{align*}The associated ODE for the particle characteristic trajectory is given by $dX(s)/ds = \hat{p}(s)$ where $X(s)=X(s;t,x,p)$.
We integrate over the particle path to obtain
\begin{multline*}
	F^{n+1}(X(t),p,t)\leq e^{-\int_0^t(G(F^n)+R(F^n))d\tau}F_0(X(0),p) \cr
	+\int_0^t e^{-\int_s^t(G(F^n)+R(F^n))d\tau}(G(F^n)+R(F^n))(X(s),p,s)ds.
\end{multline*} We observe from 
\begin{align*}
	\frac{d}{ds} \left\{e^{-\int_s^t(G(F^n)+R(F^n))d\tau}\right\} =  e^{-\int_s^t(G(F^n)+R(F^n))d\tau}(G(F^n)+R(F^n))(X(s),p,s),
\end{align*}
that
\begin{align*}
	F^{n+1}(x,p,t) &\leq e^{-\int_0^t(G(F^n)+R(F^n))d\tau}F_0(x-\hat{p}t,p)+1-e^{-\int_0^t(G(F^n)+R(F^n))d\tau} \cr
	&=e^{-\int_0^t(G(F^n)+R(F^n))d\tau}(F_0(x-\hat{p}t,p)-1)+1 \cr
	&\leq 1,
\end{align*}
where we used the boundedness of the initial data $F_0 \leq 1 $. 

We now consider the lower bound of $F^{n+1}$. By $G(F^n)\geq 0 $ on \eqref{RG}, we have 
\begin{align*}
	(\partial_t +\hat{p}\cdot \nabla_x-\tau G(F^n)+R(F^n))F^{n+1}\geq 0,
\end{align*}
which yields
\begin{align*}
	F^{n+1}(x,p,t) \geq  e^{-\int_0^t(-\tau G(F^n)+R(F^n))dt}F_0(x-\hat{p}t,p) \geq 0,
\end{align*}
where we used the boundedness of the initial data $F_0\geq 0 $. Now the collision operator of \eqref{iter} is well-defined.

 Substituting $F^{n+1}=m+\sqrt{m+\tau m^2}f^{n+1}$ on \eqref{iter} gives
\begin{align*}
	(\partial_t +\hat{p}\cdot \nabla_x+\nu)f^{n+1}=Kf^n+\Gamma(f^n,f^{n+1}),
\end{align*}
where $K=K_2-K_1$ and 
\begin{align*}
	\Gamma(f^n,f^{n+1})&= \sum_{i=1,2,4}\Gamma_i(f^{n+1},f^n)+\sum_{i=3,5,6}\Gamma_i(f^n,f^n) \cr
	&\quad +\sum_{i=1,2,3}T_i(f^{n+1},f^n,f^n)+T_4(f^n,f^n,f^n).
\end{align*}
We take $\partial^{\alpha}$ on each side to have
\begin{align*}
	\partial_t\partial^{\alpha}f^{n+1}+\hat{p}\cdot\nabla_x\partial^{\alpha}f^{n+1}+\partial^{\alpha}(\nu f^{n+1})&=\partial^{\alpha}Kf^n+\partial^{\alpha}\Gamma(f^n,f^{n+1}).
\end{align*}
We take the $L^2_{x,p}$ inner product with $\partial^{\alpha}f^{n+1}$, then the nonlinear estimates Lemma \ref{nonlin ff} and Lemma \ref{nonlin fff} with the induction hypothesis $(\ref{assume})_2$ yield
\begin{align*}
	(1-C\sqrt{M_0}-CM_0-CT_*^2M_0 -CM_0^2-CM_0^3)\sup_{0\leq t \leq T_*}\mathcal{E}_{n+1}(t) \leq \frac{M_0}{2}
\end{align*}
For sufficiently small $T_*$ and $M_0$, we have
\begin{align*}
	\sup_{0\leq t \leq T_*}\mathcal{E}_{n+1}(t) \leq M_0.
\end{align*}
Taking the limit as $n\rightarrow \infty$ gives a local-in-time classical solution. 
The remaining proof is standard as in \cite{MR1908664,MR2000470} and we omit it.
\end{proof}

\section{Global existence}
In this section, we extend the local solution constructed in Theorem \ref{Local} to a global solution. For this we first recover the full coercivity estimates of the linear operator $L$.
\subsection{Coercivity estimate}
Recall the definition of $Pf$ in Lemma \ref{null space}. Since $Pf$ is the orthonormal projection to the $L^2_p$ space with the following basis, 
\begin{align*}
\left\{\sqrt{m+\tau m^2},p_1\sqrt{m+\tau m^2},p_2\sqrt{m+\tau m^2},p_3\sqrt{m+\tau m^2},p^0\sqrt{m+\tau m^2}\right\},
\end{align*}
$Pf$ can be written as follows by the Gram-Schmidt process:  
\begin{align*}
Pf=\mathcal{A}\sqrt{m+\tau m^2}+\mathcal{B}\cdot p\sqrt{m+\tau m^2}+\mathcal{C}p^0\sqrt{m+\tau m^2},
\end{align*}
where $\mathcal{A}$, $\mathcal{B}$ and $\mathcal{C}$ are given by 
\begin{align*}
	\mathcal{A} &= \frac{1}{\lambda}\int_{\mathbb{R}^3}dp~f\sqrt{m+\tau m^2}-\frac{\lambda_0}{\lambda}\frac{1}{\lambda_{00}-\frac{\lambda_0^2}{\lambda}}\left(\int_{\mathbb{R}^3}dp~fp^0\sqrt{m+\tau m^2}-\frac{\lambda_0}{\lambda}\int_{\mathbb{R}^3}dp~f\sqrt{m+\tau m^2}\right), \cr
	\mathcal{B}_i&= \frac{1}{\lambda_i}\int_{\mathbb{R}^3}dp~fp^i\sqrt{m+\tau m^2},\cr
	\mathcal{C}&= \frac{1}{\lambda_{00}-\frac{\lambda_0^2}{\lambda}}\left(\int_{\mathbb{R}^3}dp~fp^0\sqrt{m+\tau m^2}-\frac{\lambda_0}{\lambda}\int_{\mathbb{R}^3}dp~f\sqrt{m+\tau m^2}\right),
\end{align*}
and
\begin{align*}
\lambda &= \int_{\mathbb{R}^3}dp~m(p)+\tau m^2(p), \qquad \hspace{6mm}
\lambda_i = \int_{\mathbb{R}^3}dp~(p^i)^2(m(p)+\tau m^2(p)), \cr
\lambda_0 &= \int_{\mathbb{R}^3}dp~p^0(m(p)+\tau m^2(p)), \qquad 
\lambda_{00} = \int_{\mathbb{R}^3}dp~(p^0)^2(m(p)+\tau m^2(p)),
\end{align*}
for $i=1,2,3$. Since $Pf$ has the exponential decay $\sqrt{m+\tau m^2}$, we can have
\begin{align}\label{PfABC}
\sum_{|\alpha|\leq N}\| \partial^{\alpha} Pf \|_{x,\nu} &\leq \sum_{|\alpha|\leq N} \left(\| \partial^{\alpha}\mathcal{A} \|_{L^2_x}+\| \partial^{\alpha}\mathcal{B} \|_{L^2_x}+\| \partial^{\alpha}\mathcal{C} \|_{L^2_x}\right).
\end{align}
Now we substitute $f= Pf+(I-P)f$ in the perturbation equation \eqref{pertf} to have
\begin{align*}
(\partial_t+\hat{p}\cdot \nabla_x)(Pf) = -(\partial_t+\hat{p}\cdot \nabla_x)((I-P)f)-L(I-P)f+\Gamma(f)+T(f),
\end{align*}
where we used $L(Pf)=0$ by Lemma \ref{null space}. We expand the left-hand side as follows: 
\begin{align*}
\left(\partial_t\mathcal{A} +\sum_{i=1}^3\partial_{x_i}\mathcal{A}\frac{p_i}{p^0} + \sum_{i=1}^3(\partial_t\mathcal{B}_i+\partial_{x_i}\mathcal{C})p_i+\sum_{1 \leq i,j \leq 3}\partial_{x_i}\mathcal{B}_j\frac{p_ip_j}{p^0} + \partial_t\mathcal{C}p^0 \right) \sqrt{m+\tau m^2},
\end{align*}
which is a linear combination of following $14$-basis:
\begin{align}\label{basis}
\left\{\sqrt{m+\tau m^2},\quad \frac{p_i}{p^0}\sqrt{m+\tau m^2},\quad p_i\sqrt{m+\tau m^2},\quad \frac{p_ip_j}{p^0}\sqrt{m+\tau m^2},\quad p^0\sqrt{m+\tau m^2} \right\},
\end{align}
for $1\leq i,j \leq 3$. To denote the right-hand side, we define  
\begin{align}\label{l,h}
l&=-(\partial_t+\hat{p}\cdot \nabla_x+L)((I-P)f), \qquad 
h=\Gamma(f)+T(f).
\end{align}
By expanding $l$ and $h$ with respect to the $14$-basis elements in \eqref{basis}, we obtain the following macro-micro system:
\begin{align}\label{system}
\begin{split}
\partial_t\mathcal{A}&= l_a+h_a,\cr
\partial_{x_i}\mathcal{A}&=l_i+h_i, \cr \partial_t\mathcal{B}_i+\partial_{x_i}\mathcal{C}&= l_{bci}+h_{bci},\cr
\partial_{x_i}\mathcal{B}_j+\partial_{x_j}\mathcal{B}_i &= l_{ij}+h_{ij},\cr
\partial_t\mathcal{C}&= l_c+h_c, 
\end{split}
\end{align}
for $i,j=1,\cdots,3$. On the right-hand side, $l_a, l_i, l_{bci}, l_{ij}, l_c$ and $h_a, h_i, h_{bci}, h_{ij}, h_c$ are the coefficients of expansion of $l$ and $h$ with respect to the basis in \eqref{basis}, respectively. We define the summation of the coefficients of $l$ and $h$ as follows: 
\begin{align*}
\tilde{l} &= l_a + l_c + \sum_{1\leq i \leq 3}\left(l_i + l_{bci}\right) + \sum_{1\leq i,j \leq 3}l_{ij}, \cr
\tilde{h} &= h_a + h_c + \sum_{1\leq i \leq 3}\left(h_i + h_{bci}\right) + \sum_{1\leq i,j \leq 3}h_{ij}.
\end{align*}
Note that the macro-micro system of \eqref{system} is identical to the macro-micro system of the relativistic Landau-Maxwell model (98)-(102) in \cite{MR2100057} when the electromagnetic field is zero $B=E=0$. However, we provide the full proof here for the readers' convenience.
We first observe following property of $\mathcal{A}$, $\mathcal{B}$ and $\mathcal{C}$.
The conservation laws of $f$ in \eqref{consf} can be written in the following form. 
\begin{lemma}\label{ABC0} We have 
\begin{align*}
\int_{\mathbb{T}^3}\mathcal{A}(x,t)dx= \int_{\mathbb{T}^3}\mathcal{B}(x,t)dx= \int_{\mathbb{T}^3}\mathcal{C}(x,t)dx= 0.
\end{align*}
\end{lemma}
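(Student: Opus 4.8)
The plan is to read the statement off directly from the explicit Gram--Schmidt formulas for $\mathcal{A},\mathcal{B},\mathcal{C}$, combined with the conservation laws \eqref{consf} and the matched-moment hypothesis \eqref{assumption}. First I would record that, by construction, each of $\mathcal{A}(x,t)$, $\mathcal{B}_i(x,t)$, $\mathcal{C}(x,t)$ is a fixed linear combination (with coefficients built only from the constants $\lambda,\lambda_i,\lambda_0,\lambda_{00}$) of the three velocity moments
\begin{align*}
a(x,t)&=\int_{\mathbb{R}^3}dp~f(x,p,t)\sqrt{m+\tau m^2}, \quad b_i(x,t)=\int_{\mathbb{R}^3}dp~f(x,p,t)p^i\sqrt{m+\tau m^2}, \\
c(x,t)&=\int_{\mathbb{R}^3}dp~f(x,p,t)p^0\sqrt{m+\tau m^2};
\end{align*}
indeed $\mathcal{B}_i=b_i/\lambda_i$, $\mathcal{C}=\big(c-\tfrac{\lambda_0}{\lambda}a\big)\big/\big(\lambda_{00}-\tfrac{\lambda_0^2}{\lambda}\big)$, and $\mathcal{A}=\tfrac{1}{\lambda}a-\tfrac{\lambda_0}{\lambda}\mathcal{C}$. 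Here the denominators are strictly positive: $m+\tau m^2>0$ in both the fermionic and the bosonic case, and $1$ and $p^0$ are not proportional in $L^2(\mathbb{R}^3,(m+\tau m^2)\,dp)$, so the Cauchy--Schwarz inequality $\lambda_{00}\lambda>\lambda_0^2$ is strict; hence $\mathcal{A},\mathcal{B},\mathcal{C}$ are well-defined.

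Next I would integrate in $x$ over $\mathbb{T}^3$. Since $x$-integration commutes with these linear combinations, it suffices to show $\int_{\mathbb{T}^3}a\,dx=\int_{\mathbb{T}^3}b_i\,dx=\int_{\mathbb{T}^3}c\,dx=0$ for every $t$. But $\int_{\mathbb{T}^3}a\,dx$, $\int_{\mathbb{T}^3}b_i\,dx$ and $\int_{\mathbb{T}^3}c\,dx$ are exactly the left-hand sides of the conservation laws \eqref{consf}, so they are constant in $t$ and equal their values at $t=0$, namely $\int_{\mathbb{T}^3\times\mathbb{R}^3}f_0\sqrt{m+\tau m^2}$, $\int_{\mathbb{T}^3\times\mathbb{R}^3}f_0\,p^i\sqrt{m+\tau m^2}$ and $\int_{\mathbb{T}^3\times\mathbb{R}^3}f_0\,p^0\sqrt{m+\tau m^2}$.

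Finally I would use $\sqrt{m+\tau m^2}\,f_0=F_0-m$ to rewrite these three quantities at $t=0$ as $\int_{\mathbb{T}^3\times\mathbb{R}^3}(F_0-m)(1,p^i,p^0)\,dx\,dp$, which all vanish by the assumption \eqref{assumption} that $F_0$ and $m$ share the same total mass, momentum and energy. Substituting $\int_{\mathbb{T}^3}a\,dx=\int_{\mathbb{T}^3}b_i\,dx=\int_{\mathbb{T}^3}c\,dx=0$ back into the formulas for $\mathcal{A},\mathcal{B},\mathcal{C}$ then gives the claim. There is no genuine obstacle here; the only point that needs care is the bookkeeping of matching the $1$-, $p$- and $p^0$-components of \eqref{assumption} with the precise moments entering the Gram--Schmidt expansion, and confirming the normalizing constants are nonzero.
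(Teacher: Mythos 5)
Your proposal is correct and follows essentially the same route as the paper: both reduce the claim to the vanishing of the spatial averages of the three moments $\int f\sqrt{m+\tau m^2}(1,p^i,p^0)\,dp$, obtained by combining the conservation laws with the matched-moment hypothesis \eqref{assumption}. The only difference is presentational — you use the perturbative conservation laws \eqref{consf} evaluated at $t=0$, whereas the paper invokes \eqref{NPE0} directly on $F-F_0$; you also spell out the Gram--Schmidt structure and the Cauchy--Schwarz positivity of the normalizers, details the paper leaves implicit.
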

\begin{proof}
The conservation laws in \eqref{NPE0} implies
\begin{align*}
\int_{\mathbb{T}^3\times \mathbb{R}^3} dxdp~(F-F_0)\left( \begin{array}{c} 1 \cr p^{\mu} \end{array}\right)  &= 0.
\end{align*}
Combining with the assumption \eqref{assumption}, we have 
\begin{align*}
\int_{\mathbb{T}^3\times \mathbb{R}^3} dxdp~f\sqrt{m+\tau m^2}\left( \begin{array}{c} 1 \cr p^{\mu} \end{array}\right) = 0.
\end{align*}
We derived desired results.
\end{proof}
We now establish the estimates of $\mathcal{A}, \mathcal{B}, \mathcal{C}$. The proof of the following lemma is motivated by the proof of the estimate of (108) in \cite{MR2100057}.
\begin{lemma}\label{ABC} Let $N \geq 3$. We have 
\begin{align*}
\sum_{|\alpha| \leq N}\left\{\|\partial^{\alpha}\mathcal{A}\|_{L^2_x}+\|\partial^{\alpha}\mathcal{B}\|_{L^2_x}+\|\partial^{\alpha}\mathcal{C}\|_{L^2_x}\right\} \leq C\sum_{|\alpha|\leq N-1}\left( \|\partial^{\alpha}\tilde{l}\|_{L^2_x}+\|\partial^{\alpha}\tilde{h}\|_{L^2_x}\right).
\end{align*}
\end{lemma}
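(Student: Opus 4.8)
The strategy is the classical "macroscopic estimate via elliptic regularity" argument adapted from the relativistic Landau–Maxwell system in \cite{MR2100057}; since the macro-micro system \eqref{system} is literally the $B=E=0$ case of (98)--(102) there, the work is to extract from \eqref{system} an elliptic (Poisson-type) equation for each of $\mathcal{A},\mathcal{B},\mathcal{C}$ whose right-hand side involves only $x$-derivatives of $l$ and $h$, then invoke $\int_{\mathbb{T}^3}\mathcal{A}\,dx=\int_{\mathbb{T}^3}\mathcal{B}\,dx=\int_{\mathbb{T}^3}\mathcal{C}\,dx=0$ from Lemma \ref{ABC0} together with the Poincaré inequality on the torus. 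First I would handle $\mathcal{B}$: from the third and fourth equations of \eqref{system}, namely $\partial_{x_i}\mathcal{B}_i+\partial_{x_j}\mathcal{B}_i = l_{ij}+h_{ij}$ and $\partial_t\mathcal{B}_i+\partial_{x_i}\mathcal{C}=l_{bci}+h_{bci}$, one applies $\partial_{x_j}$ to the symmetric-gradient relation and sums over $j$, using $\sum_j \partial_{x_j}(\partial_{x_i}\mathcal{B}_j+\partial_{x_j}\mathcal{B}_i)=\partial_{x_i}(\nabla_x\cdot\mathcal{B})+\Delta_x\mathcal{B}_i$, and then eliminates $\nabla_x\cdot\mathcal{B}$ using the remaining relations; this yields $\Delta_x\mathcal{B}_i = \partial_x(\text{combination of } l,h)$, from which $\|\partial^\alpha \mathcal{B}\|_{L^2_x}\le C\sum_{|\beta|\le|\alpha|-1}(\|\partial^\beta\tilde l\|_{L^2_x}+\|\partial^\beta\tilde h\|_{L^2_x})$ follows by elliptic $L^2$-estimates on $\mathbb{T}^3$ (valid because $\mathcal{B}$ has zero mean).

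Next I would treat $\mathcal{C}$: from the second equation $\partial_{x_i}\mathcal{A}=l_i+h_i$ and the $\mathcal{B}$-$\mathcal{C}$ coupling $\partial_t\mathcal{B}_i+\partial_{x_i}\mathcal{C}=l_{bci}+h_{bci}$, apply $\partial_{x_i}$ to the latter and sum in $i$: $\Delta_x\mathcal{C} + \partial_t(\nabla_x\cdot\mathcal{B})=\partial_x(\cdots)$, and substitute the divergence of $\mathcal{B}$ already controlled (or, more cleanly, use $\partial_t\mathcal{B}_i$ via the fourth equation again to keep everything in terms of spatial derivatives of $l,h$). One then gets a Poisson equation $\Delta_x\mathcal{C}=\partial_x(\text{combination})$ and applies the same zero-mean elliptic estimate. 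Finally $\mathcal{A}$ is the easiest: since $\partial_{x_i}\mathcal{A}=l_i+h_i$ directly, $\|\partial^\alpha\nabla_x\mathcal{A}\|_{L^2_x}\le\sum(\|\partial^\beta l_i\|+\|\partial^\beta h_i\|)$, and the zero-mean condition from Lemma \ref{ABC0} plus Poincaré upgrades this to control of $\|\partial^\alpha\mathcal{A}\|_{L^2_x}$ itself. Throughout, time derivatives $\partial_t\mathcal{A},\partial_t\mathcal{C}$ appearing in the first and fifth equations of \eqref{system} are \emph{not} estimated here — they are only used, if at all, as identities to substitute; the lemma only claims control of the $\mathcal{A},\mathcal{B},\mathcal{C}$ themselves by \emph{spatial} derivatives of $l,h$ up to order $N-1$, which is exactly what the elliptic argument produces.

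The main obstacle is purely bookkeeping rather than conceptual: one must carefully track which linear combinations of the coefficients $l_a,l_i,l_{bci},l_{ij},l_c$ (and similarly for $h$) appear after taking divergences and time-eliminations, and verify that every such combination is dominated by $\tilde l$ and $\tilde h$ as defined in the excerpt (recall $\tilde l = l_a+l_c+\sum_i(l_i+l_{bci})+\sum_{i,j}l_{ij}$), so that the final bound is stated cleanly in terms of $\partial^\alpha\tilde l$ and $\partial^\alpha\tilde h$. The one genuine analytic input is the elliptic estimate $\|g\|_{H^{k+2}(\mathbb{T}^3)}\le C\|\Delta g\|_{H^k(\mathbb{T}^3)}$ for mean-zero $g$, which is standard. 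Because \cite[see the estimate of (108)]{MR2100057} carries out precisely this computation for the same system, I would present the derivation of the Poisson equations in detail and then cite that reference for the elliptic step, noting that the absence of the electromagnetic field only removes terms and hence does not affect any estimate.
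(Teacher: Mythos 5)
Your outline correctly identifies the two main ingredients (elliptic/Poisson estimates from the symmetric-gradient relation, and Poincar\'e with zero mean from Lemma~\ref{ABC0}), and your Poisson equation for $\mathcal{B}$ is essentially the paper's computation. However, there is a genuine gap, and it comes from a misreading of what the lemma claims.

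The multi-index $\alpha$ in this paper carries a time component: by the definition at the start of Section~1, $\partial^\alpha=\partial_t^{\alpha_0}\partial_{x_1}^{\alpha_1}\partial_{x_2}^{\alpha_2}\partial_{x_3}^{\alpha_3}$. So the left-hand side of Lemma~\ref{ABC} includes, for instance, $\|\partial_t^N\mathcal{A}\|_{L^2_x}$, $\|\partial_t^N\mathcal{B}\|_{L^2_x}$ and $\|\partial_t^N\mathcal{C}\|_{L^2_x}$, and the right-hand side allows \emph{mixed} $t$- and $x$-derivatives of $\tilde l,\tilde h$ of total order up to $N-1$. Your plan explicitly declares that the first and fifth equations of \eqref{system} --- $\partial_t\mathcal{A}=l_a+h_a$ and $\partial_t\mathcal{C}=l_c+h_c$ --- ``are not estimated here'' and that ``the lemma only claims control \dots by spatial derivatives of $l,h$.'' That is not so, and without those two equations the pure-in-time derivatives cannot be closed at order $N-1$. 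Concretely, if one tries to bound $\|\partial_t^N\mathcal{A}\|_{L^2_x}$ by Poincar\'e plus the second equation $\partial_{x_i}\mathcal{A}=l_i+h_i$, one gets $\|\partial_t^N\mathcal{A}\|_{L^2_x}\le C\|\nabla_x\partial_t^N\mathcal{A}\|_{L^2_x}=C\|\partial_t^N(l_i+h_i)\|_{L^2_x}$, which involves $N$ derivatives of $l,h$, one too many. The paper instead writes $\partial_t^N\mathcal{A}=\partial_t^{N-1}(l_a+h_a)$, trading a time derivative for a zero-order term, and this is the step that makes the count $N-1$ work. The same mechanism is needed for $\partial_t^N\mathcal{C}$ via the fifth equation, and for $\partial_t^k\mathcal{B}$ with $k\ge 2$ via the third equation combined with the fifth (the paper's estimate \eqref{Bt}); Poincar\'e is used only to handle the $k=0,1$ cases.

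Once you add the missing temporal-derivative step, the rest is sound. A secondary difference (not a gap) is your treatment of $\mathcal{C}$: you propose deriving a Poisson equation $\Delta_x\mathcal{C}=\partial_x(\dots)$, whereas the paper simply uses the third equation $\partial_{x_i}\mathcal{C}=l_{bci}+h_{bci}-\partial_t\mathcal{B}_i$ directly together with the already-obtained bound on $\partial_t\mathcal{B}_i$ from \eqref{BT}; the paper's route is more economical because it avoids a second elliptic solve, but both work once the temporal reduction via the fifth equation is in place.
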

\begin{proof}
We first prove the estimate of $\mathcal{A}$. Taking $\partial^{\alpha}$ on the first equation of \eqref{system} gives 
\begin{align*}
	\partial^{\alpha}\partial_t\mathcal{A}&= \partial^{\alpha}l_a+\partial^{\alpha}h_a.
\end{align*}
we take the innerproduct with $\partial^{\alpha}\partial_t\mathcal{A}$ and apply the H\"{o}lder inequality to have
\begin{align*}
\|\partial^{\alpha}\partial_t \mathcal{A}\|_{L_{x}^2}^2 \leq C\left(\|\partial^{\alpha}l_a\|_{L^2_x}\|\partial^{\alpha}\partial_t \mathcal{A}\|_{L_{x}^2}+\|\partial^{\alpha}h_a\|_{L^2_x}\|\partial^{\alpha}\partial_t \mathcal{A}\|_{L_{x}^2}\right).
\end{align*} 
Dividing each side by $\|\partial^{\alpha}\partial_t \mathcal{A}\|_{L_{x}^2}$, we have
\begin{align*}
\|\partial^{\alpha}\partial_t \mathcal{A}\|_{L_{x}^2} \leq C(\|\partial^{\alpha}l_a\|_{L^2_x}+\|\partial^{\alpha}h_a\|_{L^2_x}).
\end{align*}
Similarly we take $\partial^{\alpha}$ on the second equation of \eqref{system} to have
\begin{align*}
	\partial^{\alpha}\partial_{x_i}\mathcal{A}&=\partial^{\alpha}l_i+\partial^{\alpha}h_i.
\end{align*}
Multiplying $\partial^{\alpha}\partial_{x_i}\mathcal{A}$ on each side and applying the H\"{o}lder inequality, we have
\begin{align*}
\|\partial^{\alpha}\nabla_x\mathcal{A}\|_{L_{x}^2}\leq C  (\|\partial^{\alpha}l_i\|_{L^2_x}+\|\partial^{\alpha}h_i\|_{L^2_x}).
\end{align*}
We apply the Poincar\'{e} inequality on $\mathcal{A}$ to have
\begin{align*}
	\|\mathcal{A}\|_{L_{x}^2}- \left\|\frac{1}{|\mathbb{T}^3|}\int_{\mathbb{T}^3}\mathcal{A}(x)dx\right\|_{L_{x}^2} \leq C  \|\nabla_x \mathcal{A}\|_{L^2_x}.
\end{align*}
Then the conservation law for $\mathcal{A}$ in Lemma \ref{ABC0} implies 
\begin{align*}
	\|\mathcal{A}\|_{L_{x}^2}\leq C  \|\nabla_x \mathcal{A}\|_{L^2_x}.
\end{align*}
This completes the estimate of $\mathcal{A}$. 
Since $\mathcal{B}$ and $\mathcal{C}$ are connected by the third equation of \eqref{system}, we consider the fourth and the fifth equations of \eqref{system} first. For the notational brevity, we use $\partial_j$ to denote $\partial_{x_j}$. Using the fourth equation of \eqref{system}, we calculate
\begin{align*}
	\triangle \mathcal{B}_i = \sum_{1\leq j\leq 3}\partial_{jj}\mathcal{B}_i &= \sum_{j \neq i} \partial_{jj} \mathcal{B}_i +\partial_{ii} \mathcal{B}_i \cr
	&= \sum_{j \neq i} \left(\partial_{j}l_{ij} + \partial_{j}h_{ij}-\partial_{ji} \mathcal{B}_j\right) + \frac{1}{2}(\partial_{i}l_{ii} + \partial_{i}h_{ii}).
\end{align*}
We substitute the fourth equation for $i=j$ case to have 
\begin{align*}
	\triangle \mathcal{B}_i &= \sum_{j \neq i} \left(\partial_{j}l_{ij} + \partial_{j}h_{ij}-\frac{1}{2}(\partial_{i}l_{jj} +\partial_{i}h_{jj})\right) + \frac{1}{2}(\partial_{i}l_{ii} + \partial_{i}h_{ii}).
\end{align*}
Taking $\partial^{\alpha}$ and multiplying $\partial^{\alpha}\mathcal{B}_i$ on each side yield
\begin{align}\label{Bx}
	\|\partial^{\alpha}\nabla_x \mathcal{B}\|_{L^2_x} \leq C \sum_{1 \leq i,j \leq 3} \left(\|\partial^{\alpha}l_{ij}\|_{L^2_x}+\|\partial^{\alpha}h_{ij}\|_{L^2_x}\right).
\end{align}
We take $\partial^{\alpha}$ on the fifth equation of \eqref{system} to have
\begin{align}\label{tC}
	\|\partial^{\alpha}\partial_t \mathcal{C}\|_{L_{x}^2} \leq C\|\partial^{\alpha}l_c\|_{L^2_x}+\|\partial^{\alpha}h_c\|_{L^2_x}.
\end{align}
Now we use the third equation of \eqref{system}. We consider the estimate of $\mathcal{B}$ when there are more than one temporal derivatives. For this, we take the temporal derivative $\partial_t^{n}$ for $1\leq n\leq N-1 $ on the third equation of \eqref{system} to have 
\begin{align*}
	\partial_t^{n}\partial_t\mathcal{B}_i&= \partial_t^{n}l_{bci}+\partial_t^{n}h_{bci}-\partial_t^{n}\partial_{x_i}\mathcal{C}.
\end{align*}
Then the estimate \eqref{tC} gives 
\begin{align}\label{Bt}
	\|\partial_t^{n+1}\mathcal{B}_i\|_{L^2_x}& \leq \|\partial_t^{n}l_{bci}\|_{L^2_x}+\|\partial_t^{n}h_{bci}\|_{L^2_x} + C\left(\|\partial_{x_i}\partial_t^{n-1}l_c\|_{L^2_x}+\|\partial_{x_i}\partial_t^{n-1}h_c\|_{L^2_x}\right).
\end{align}
For the estimate of $\mathcal{B}$ when there are less than two temporal derivatives, we apply the Poincar\'{e} inequality on $\mathcal{B}$ and $\partial_t\mathcal{B}$ to have
\begin{align*}
	\|\partial_t^n\mathcal{B}_i\|_{L^2_x}-\left\|\frac{1}{|\mathbb{T}^3|}\partial_t^n\int_{\mathbb{T}^3}  \mathcal{B}_i dx\right\| &\leq \|\nabla_x \partial_t^n \mathcal{B}_i\|_{L^2_x}, 
\end{align*}
for $n=0,1$. Then the conservation law in Lemma \ref{ABC0} yields 
\begin{align*}
	\|\partial_t^n\mathcal{B}_i\|_{L^2_x}&\leq \|\nabla_x \partial_t^n \mathcal{B}_i\|_{L^2_x}.
\end{align*}
Combining with \eqref{Bx} and \eqref{Bt}, we conclude that
\begin{align}\label{BT}
\sum_{|\alpha| \leq N}\|\partial^{\alpha}\mathcal{B}\|_{L^2_x} \leq C\sum_{|\alpha|\leq N-1}\left( \|\partial^{\alpha}\tilde{l}\|_{L^2_x}+\|\partial^{\alpha}\tilde{h}\|_{L^2_x}\right).
\end{align}
For the estimate of $\mathcal{C}$, we take $\partial^{\alpha}$ on third equation of \eqref{system} to have
\begin{align*}
	\partial^{\alpha}\partial_{x_i}\mathcal{C}&= \partial^{\alpha}l_{bci}+\partial^{\alpha}h_{bci}-\partial^{\alpha}\partial_t\mathcal{B}_i.
\end{align*}
Applying \eqref{BT} yields 
\begin{align}\label{xC}
\|\partial^{\alpha}\partial_{x_i}\mathcal{C}\|_{L^2_x}&\leq C\sum_{|\beta|= |\alpha|}\left( \|\partial^{\beta}\tilde{l}\|_{L^2_x}+\|\partial^{\beta}\tilde{h}\|_{L^2_x}\right).
\end{align}
Then the Poincar\'{e} inequality with Lemma \ref{ABC0} gives
\begin{align*}
\|\mathcal{C}\|_{L_{x}^2}\leq C  \|\nabla_x \mathcal{C}\|_{L^2_x}.
\end{align*}
Combining with the estimates \eqref{tC} and \eqref{xC}, we derive the desired result.
\end{proof}

\begin{lemma}\label{lh esti} Let $N\geq 3$. Suppose that  
\begin{align*}
\sum_{|\alpha|\leq N}\|\partial^{\alpha}f\|_{L^2_{x,p}}^2 \leq M_0 .
\end{align*}
Then we have
\begin{align*}
&(1) \ \sum_{|\alpha|\leq N-1} \|\partial^{\alpha}\tilde{l}\|_{L^2_x} \leq C \sum_{|\alpha|\leq N}\|(I-P)\partial^{\alpha}f\|_{x,\nu}, \cr 
&(2) \ \sum_{|\alpha|\leq N} \|\partial^{\alpha}\tilde{h}\|_{L^2_x} \leq C(\sqrt{M_0}+M_0) \sum_{|\alpha|\leq N}\|\partial^{\alpha}f\|_{x,\nu}.
\end{align*}
\end{lemma}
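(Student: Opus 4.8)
The strategy is to reduce everything to the nonlinear and linear estimates already proven, namely Lemma \ref{nonlin ff}, Lemma \ref{nonlin fff}, Lemma \ref{K esti}, and the coercivity Lemma \ref{coercivity}, together with the structure of $l$ and $h$ defined in \eqref{l,h}. The key observation is that the coefficients $l_a, l_i, l_{bci}, l_{ij}, l_c$ (hence $\tilde l$) are obtained by projecting $l=-(\partial_t+\hat p\cdot\nabla_x+L)((I-P)f)$ against a fixed finite family of rapidly decaying test functions built from the $14$-basis in \eqref{basis}; similarly $\tilde h$ is a projection of $h=\Gamma(f)+T(f)$ against the same family. So in each case the job is: (i) recognize each coefficient as $\langle \text{(the relevant quantity)}, \chi_k\rangle_{L^2_p}$ for one of finitely many weight functions $\chi_k\in\operatorname{span}\{\text{basis}\}$, each of which decays like $\sqrt{m+\tau m^2}\sim J(p^0/2)$ and thus lies in $L^2_\nu$ with a $\nu^{-1/2}$ weight to spare, and (ii) bound that inner product by the norms appearing on the right-hand sides.

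For part (1): each coefficient of $l$ is, up to a bounded constant determined by Gram–Schmidt, of the form $\langle (\partial_t+\hat p\cdot\nabla_x+L)((I-P)f), \chi_k\rangle_{L^2_p}$ where $\chi_k$ is one of the $14$-basis elements. The transport part $\langle \hat p\cdot\nabla_x (I-P)f, \chi_k\rangle$ and the time-derivative part are handled by integrating by parts (in $x$) or by moving one derivative onto $f$; since $|\hat p|\le 1$ and $\chi_k\in L^2_\nu$, the Cauchy–Schwarz inequality in $p$ gives a bound by $\|(I-P)\partial^\alpha f\|_{\nu}$ after taking $\partial^\alpha$ with $|\alpha|\le N-1$ (note the loss of one derivative is why the sum on the right runs up to $N$). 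For the $L((I-P)f)$ term, I would use that $L$ maps into $L^2_\nu$ with the estimates $\nu(p)\sim (p^0)^{1/2}$ and the $K_i$ bounds from Lemma \ref{K esti}, so $\langle L(I-P)f,\chi_k\rangle_{L^2_p}\le \|L(I-P)f\|_{\nu^{-1}\text{-weighted}}\|\chi_k\|_\nu\le C\|(I-P)f\|_\nu$; summing over the finitely many $\chi_k$ and over $|\alpha|\le N-1$ yields (1). The projection $(I-P)$ appearing on the right is automatic because $l$ only involves $(I-P)f$.

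For part (2): $h=\Gamma(f)+T(f)=\sum_{i=1}^{6}\Gamma_i(f,f)+\sum_{i=1}^{4}T_i(f,f,f)$, and each coefficient of $\tilde h$ is $\langle \Gamma(f)+T(f), \chi_k\rangle_{L^2_p}$ against a basis element $\chi_k$, which plays the role of the test function "$\eta$" (resp. "$\xi$") in Lemma \ref{nonlin ff} (resp. Lemma \ref{nonlin fff}). Since $\chi_k$ is smooth with Gaussian-type decay, $\|\chi_k\|_\nu\le C$; so Lemma \ref{nonlin ff} gives $|\langle\Gamma(f,f),\chi_k\rangle_{L^2_p}|\le C\|f\|_{L^2_p}\|f\|_\nu\le C\|f\|_\nu^2$ and Lemma \ref{nonlin fff} gives $|\langle T(f,f,f),\chi_k\rangle_{L^2_p}|\le C\|f\|_{L^2_p}^2\|f\|_\nu\le C\|f\|_\nu^3$. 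Taking $\partial^\alpha$ with $|\alpha|\le N$, distributing the derivatives via the Leibniz rule, and using the Sobolev embedding $H^{N}_x\hookrightarrow L^\infty_x$ for $N\ge 3$ to put the lower-order factors in $L^\infty$ and the top-order factor in $L^2$, one gets $\|\partial^\alpha\tilde h\|_{L^2_x}\le C\big(\sum_{|\beta|\le N}\|\partial^\beta f\|_{x,\nu}\big)^2+C\big(\sum_{|\beta|\le N}\|\partial^\beta f\|_{x,\nu}\big)^3$; applying the hypothesis $\sum_{|\alpha|\le N}\|\partial^\alpha f\|_{L^2_{x,p}}^2\le M_0$ to convert two of the factors into $\sqrt{M_0}$ (resp. $M_0$) leaves one factor of $\sum_{|\alpha|\le N}\|\partial^\alpha f\|_{x,\nu}$, which is exactly the stated bound $C(\sqrt{M_0}+M_0)\sum_{|\alpha|\le N}\|\partial^\alpha f\|_{x,\nu}$.

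The main obstacle is the bookkeeping in part (2): one must be careful that when $\partial^\alpha$ falls on the nonlinearity, the bilinear/trilinear estimates of Lemma \ref{nonlin ff} and Lemma \ref{nonlin fff} are applied in the form that keeps exactly \emph{one} factor in the dissipative norm $\|\cdot\|_{x,\nu}$ and the rest in $L^2_{x,p}$ (possibly after an $L^\infty_x$–$L^2_p$ splitting via Sobolev embedding), so that the final power counting produces $(\sqrt{M_0}+M_0)$ times a single copy of $\sum_{|\alpha|\le N}\|\partial^\alpha f\|_{x,\nu}$ rather than a cubic expression; the relativistic structure (the $f(p)f(p')$-type terms in $\Gamma_2$, $T_3$) is already absorbed into Lemma \ref{nonlin ff}–\ref{nonlin fff}, so nothing new arises there. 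Part (1) is routine once one notes the derivative loss and that $L$ is bounded from $\nu$-weighted $L^2$ into the dual norm.
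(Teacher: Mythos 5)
Your proposal is essentially the paper's own argument: project $l$ and $h$ against the fixed fourteen-dimensional family of rapidly decaying basis functions from \eqref{basis}, absorb the extra derivative from $(\partial_t+\hat p\cdot\nabla_x)$ by summing up to $|\alpha|\le N$ on the right, control the $K_i$ contributions with Lemma \ref{K esti}, and for part (2) apply Lemmas \ref{nonlin ff}--\ref{nonlin fff} with the basis element playing the role of the test function, distributing $\partial^\alpha$ by Leibniz and using Sobolev embedding to put lower-order factors in $L^\infty_x$. The only cosmetic issue is the intermediate display $\|\partial^\alpha\tilde h\|_{L^2_x}\lesssim(\sum\|\partial^\beta f\|_{x,\nu})^2+(\sum\|\partial^\beta f\|_{x,\nu})^3$, which cannot be reduced using the hypothesis (that controls $L^2_{x,p}$, not $L^2_{x,\nu}$); but you correct this yourself in the final paragraph by insisting that exactly one factor stay in $\|\cdot\|_{x,\nu}$ and the others in $L^2_{x,p}$, which is precisely what the paper does.
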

\begin{proof}
For the Newtonian Boltzmann equation and the relativistic Landau-Maxwell system, analogous proofs can be found in \cite{MR2000470} and \cite{MR2100057}, respectively. The difference now is that the estimate of the nonlinear term $h$ includes the third-order nonlinear terms. \newline
(1) We denote the $14$-basis of \eqref{basis} as $\{e_i\}_{1\leq i \leq 14}$, and let $\{e_i^*\}_{1\leq i \leq 14}$ be the corresponding orthonormal basis. Then the orthonormal basis can be written by a linear combination of the original basis $\{e_i\}_{1\leq i \leq 14}$ as follows: 
\begin{align*}
e_i^* = \sum_{j=1}^{14} C_{ij}e_j,
\end{align*}
for $j=1,\cdots,14$. We consider the orthonormal expansion of $l$ as follows:
\begin{align*}
l= \sum_{i=1}^{14}\langle l,e_i^*\rangle_{L^2_p}e_i^*  = \sum_{i=1}^{14}\left\langle l, \sum_{j=1}^{14} C_{ij}e_j\right\rangle_{L^2_p} \sum_{k=1}^{14} C_{ik}e_k.
\end{align*}
Then the coefficient of $e_k$ can be read as follows:
\begin{align*}
\sum_{1\leq i,j\leq 14} C_{ij}C_{ik}\langle l, e_j\rangle_{L^2_p},
\end{align*}
which correspond to $l_a, l_i, l_{bci}, l_{ij}$, and $l_c$. 
By the definition of $l$ in \eqref{l,h} and the linear operator $L$ in Proposition \ref{linearization}, we can write $l$ as
\begin{align*}
l&=-(\partial_t+\hat{p}\cdot \nabla_x+\nu+K_1-K_2)((I-P)f).
\end{align*}
For $|\alpha|=N-1$, we have
\begin{multline*}
\bigg\|\int_{\mathbb{R}^3} dp~\partial^{\alpha} l \cdot e_i(p)  \bigg\|_{L^2_x} \cr
\leq  \bigg\|\int_{\mathbb{R}^3} dp~\bigg(\sum_{|\beta|= N}(1+\hat{p})(I-P)\partial^{\beta}f + (\nu+K_1-K_2)(I-P)\partial^{\alpha}f\bigg) \cdot e_i(p)   \bigg\|_{L^2_x}. 
\end{multline*}
We use the H\"{o}lder inequality on the $(1+\hat{p})(I-P)\partial^{\beta}f$ term and the $\nu(I-P)\partial^{\alpha}f$ term. Then we have
\begin{align*}
\int_{\mathbb{R}^3} dp~ \bigg(\sum_{|\beta|= N}(1+\hat{p})(I-P)\partial^{\beta}f + \nu(I-P)\partial^{\alpha}f\bigg) \cdot e_i(p)  \leq \sum_{|\alpha|\leq N}\|(I-P)\partial^{\alpha}f \|_{L^2_{x,p}} ,
\end{align*}
where we used the exponential decay $\sqrt{m+\tau m^2}$ of the basis $e_i(p)$ in \eqref{basis} that implies
\begin{align*}
\int_{\mathbb{R}^3}\left((1+\hat{p})^2+\nu(p)\right)e_i^2(p) dp \leq C.
\end{align*}
From the estimate of the compact operator $K_j$ from Lemma \ref{K esti}, we also have
\begin{align*}
	\langle	K_jf , e_i \rangle_{L^2_p}&\leq C \|f \|_{\nu},
\end{align*}
for $j=1,2$ and $i=1,\cdots,14$. The estimates above together yield
\begin{align*}
\bigg\|\int_{\mathbb{R}^3}\partial^{\alpha} l \cdot e_i(p) dp  \bigg\|_{L^2_x}  & \leq C \sum_{|\alpha| = N}\|(I-P)\partial^{\alpha}f \|_{L^2_{x,p}}+\|(I-P)\partial^{\alpha}f \|_{x,\nu} \cr
& \leq C \sum_{|\alpha| \leq N}\|(I-P)\partial^{\alpha}f \|_{x,\nu}.
\end{align*} 
(2) In the same manner, the coefficient of $e_k$ of expansion of $l$ can be written as
\begin{align*}
\sum_{1\leq i,j\leq 14} C_{ij}C_{ik}\langle h, e_i\rangle_{L^2_p}.
\end{align*}
For the second-order nonlinear terms, by Lemma \ref{nonlin ff}, we have
\begin{align*}
\big| \langle \partial^{\alpha} \Gamma(f,f) ,e_i \rangle_{L^2_{x,p}} \big| 
\leq  C \sum_{|\alpha_1|+|\alpha_2|\leq|\alpha|}\|\partial^{\alpha_1}f\|_{L^2_{x,p}}\|\partial^{\alpha_2}f\|_{x,\nu}
\leq C \sqrt{M_0} \sum_{|\alpha_1|\leq|\alpha|} \|\partial^{\alpha_1}f\|_{x,\nu}.
\end{align*}
For the third-order nonlinear term, Lemma \ref{nonlin fff} yields
\begin{align*}
\big| \langle \partial^{\alpha} T(f,f,f) ,e_i \rangle \big| 
&\leq  C \sum_{|\alpha_1|+|\alpha_2|+|\alpha_3|\leq|\alpha|}\|\partial^{\alpha_1}f\|_{L^2_p}\|\partial^{\alpha_2}f\|_{L^2_p}\|\partial^{\alpha_3}f\|_{\nu}.
\end{align*}
Then the Sobolev embedding $H^2(\mathbb{T}^3)\subset\subset L^{\infty}(\mathbb{T}^3)$ implies 
\begin{align*}
\bigg\|\int_{\mathbb{R}^3}\partial^{\alpha} T(f,f,f) \cdot e_i(p) dp  \bigg\|_{L^2_x}  
&\leq CM_0\sum_{|\alpha_1|\leq|\alpha|}\|\partial^{\alpha_1}f\|_{x,\nu}.
\end{align*}
So we obtain the desired results.
\end{proof}
We now have all the estimates to recover the full coercivity. We combine \eqref{PfABC} with Lemma \ref{ABC} and Lemma \ref{lh esti} to have
\begin{align*}
\sum_{|\alpha|\leq N}\| \partial^{\alpha} Pf \|_{x,\nu} 
&\leq \sum_{|\alpha|\leq N-1} \left( \| \partial^{\alpha}l \|_{L^2_x}+\| \partial^{\alpha}h \|_{L^2_x} \right)\cr
&\leq  C\sum_{|\alpha|\leq N}\left(\|(I-P)\partial^{\alpha}f\|_{x,\nu}+C\sqrt{M_0}\|\partial^{\alpha}f\|_{x,\nu}\right).
\end{align*}
Thus, for a sufficiently small $M_0$, there exists $\delta>0$ such that  
\begin{align}\label{full coer}
\sum_{|\alpha|\leq N}\langle L\partial^{\alpha}f, \partial^{\alpha}f\rangle_{L^2_{x,p}} &\geq \delta \sum_{|\alpha|\leq N}\|\partial^{\alpha}f\|_{x,\nu}^2,
\end{align}by Lemma \ref{coercivity}.

\subsection{Global existence} We now have all the ingredients for the proof of Theorem \ref{Main Theorem}. Extending the local solution constructed in Theorem \ref{Local} to a global solution is standard as in \cite{MR2000470,MR2095473}. We only sketch the proof here. Recall that substituting $F=m+\sqrt{m+\tau m^2}f$ on \eqref{RQBE} yields the following linearized equation: 
\begin{align*}
\partial_tf+\hat{p}\cdot\nabla_xf +Lf&= \Gamma(f)+T(f), \cr
f(x,p,0) &= f_0(x,p). 
\end{align*}
We take $\partial^{\alpha}$ on each side to have
\begin{align*}
\partial_t\partial^{\alpha}f+\hat{p}\cdot\nabla_x\partial^{\alpha}f+L\partial^{\alpha}f&= \partial^{\alpha}\Gamma(f)+\partial^{\alpha}T(f).
\end{align*}
Taking the inner product with $\partial^{\alpha}f $ yields
\begin{align*}
\frac{1}{2}\frac{d}{dt}\| \partial^{\alpha}f\|_{L^2_{x,p}}^2+ \langle L\partial^{\alpha}f, \partial^{\alpha}f\rangle_{L^2_{x,p}}&= \langle \partial^{\alpha}f, \partial^{\alpha}\Gamma(f) \rangle_{L^2_{x,p}}+\langle \partial^{\alpha}f, \partial^{\alpha}T(f) \rangle_{L^2_{x,p}}.
\end{align*}
Applying the full coercivity estimate \eqref{full coer}, we have
\begin{align*}
\frac{1}{2}\frac{d}{dt}\| \partial^{\alpha}f\|_{L^2_{x,p}}^2+ \delta \| \partial^{\alpha}f\|_{x,\nu}^2&\leq  \langle \partial^{\alpha}f, \partial^{\alpha}\Gamma(f) \rangle_{L^2_{x,p}}+\langle \partial^{\alpha}f, \partial^{\alpha}T(f) \rangle_{L^2_{x,p}}.
\end{align*}
The estimate of the second-order nonlinear terms in Lemma \ref{nonlin ff} gives 
\begin{align*}
\langle \partial^{\alpha}f, \partial^{\alpha}\Gamma(f) \rangle_{L^2_{x,p}} \leq C \sum_{|\alpha_1|+|\alpha_2| \leq N} \int_{\mathbb{T}^3}dx ~\left(\|\partial^{\alpha_1}f\|_{L^2_p}\|\partial^{\alpha_2}f\|_{\nu}+\|\partial^{\alpha_1}f\|_{\nu}\|\partial^{\alpha_2}f\|_{L^2_p}\right)\|\partial^{\alpha}f\|_{\nu}.
\end{align*}
Without loss of generality, we assume that $\alpha_1$ is less than or equal to $\alpha_2$. Since $N\geq 3$, we have $|\alpha_1|+2 \leq N $. Thus the Sobolev embedding $H^2(\mathbb{T}^3)\subset\subset L^{\infty}(\mathbb{T}^3)$ implies 
\begin{align*}
\sum_{|\alpha| \leq N}\langle \partial^{\alpha}f, \partial^{\alpha}\Gamma(f) \rangle_{L^2_{x,p}} &\leq C \sum_{|\alpha| \leq N}\|\partial^{\alpha}f\|_{L^2_{x,p}}\sum_{|\alpha| \leq N}\|\partial^{\alpha}f\|_{x,\nu}\sum_{|\alpha| \leq N}\|\partial^{\alpha}f\|_{x,\nu}\cr
&\leq C\sqrt{\mathcal{E}(t)}\sum_{|\alpha| \leq N} \|\partial^{\alpha}f \|_{x,\nu}^2.
\end{align*}
For the estimate of the third-order nonlinear terms, we apply Lemma \ref{nonlin fff} to have
\begin{multline*}
\langle \partial^{\alpha}f, \partial^{\alpha}T(f) \rangle_{L^2_{x,p}} \leq  C \sum_{|\alpha_1|+|\alpha_2|+|\alpha_3| \leq N} \int_{\mathbb{T}^3}dx ~ \big( \|\partial^{\alpha_1}f\|_{L^2_p}\|\partial^{\alpha_2}f\|_{L^2_p}\|\partial^{\alpha_3}f\|_{\nu} \cr
+\|\partial^{\alpha_1}f\|_{L^2_p}\|\partial^{\alpha_2}f\|_{\nu}\|\partial^{\alpha_3}f\|_{L^2_p}  +\|\partial^{\alpha_1}f\|_{\nu }\|\partial^{\alpha_2}f\|_{L^2_p}\|\partial^{\alpha_3}f\|_{L^2_p} \big)\|\partial^{\alpha}f\|_{\nu}.
\end{multline*}
Similarly we assume that $\alpha_1$ and $\alpha_2$ are less than or equal to $\alpha_3$. 
Since $N\geq 3$, we have $|\alpha_1|+2\leq N$ and $|\alpha_2|+2 \leq N $. By the Sobolev embedding $H^2(\mathbb{T}^3)\subset\subset L^{\infty}(\mathbb{T}^3)$, we have
\begin{align*}
\sum_{|\alpha| \leq N}\langle \partial^{\alpha}f, \partial^{\alpha}T(f) \rangle_{L^2_{x,p}} &\leq C \sum_{|\alpha| \leq N}\|\partial^{\alpha}f\|_{L^2_{x,p}}\sum_{|\alpha| \leq N}\|\partial^{\alpha}f\|_{L^2_{x,p}}\sum_{|\alpha| \leq N}\|\partial^{\alpha}f\|_{x,\nu}\sum_{|\alpha| \leq N}\|\partial^{\alpha}f\|_{x,\nu}\cr
&\leq C\mathcal{E}(t)\sum_{|\alpha| \leq N} \|\partial^{\alpha}f \|_{x,\nu}^2.
\end{align*}
Combining these estimates, we conclude that 
\begin{align*}
\frac{1}{2}\frac{d}{dt}\| \partial^{\alpha}f\|_{L^2_{x,p}}^2 + \delta \sum_{|\alpha| \leq N} \|\partial^{\alpha}f \|_{x,\nu}^2 &\leq  C \left(\sqrt{\mathcal{E}(t)}+\mathcal{E}(t)\right) \sum_{|\alpha| \leq N} \|\partial^{\alpha}f \|_{x,\nu}^2.
\end{align*}
The remaining proof can be established by the standard continuity argument as in \cite{MR2000470,MR2095473}. This completes the proof.

\noindent {\bf Acknowledgement:}
 J. W. Jang is supported by CRC 1060 \textit{The mathematics of emergent effects} at the
University of Bonn funded through the German Science Foundation (DFG).

 S.-B. Yun is supported by Samsung Science and Technology Foundation under Project Number SSTF-BA1801-02.

\end{document}